\documentclass[a4paper,11pt]{amsart}
\usepackage{amsfonts}
\usepackage{amstext}
\usepackage{amsthm}
\usepackage{amsmath}
\usepackage{amssymb}
\usepackage{mathtools}
\usepackage{latexsym}
\usepackage{graphicx}
\usepackage{caption}
\usepackage{enumitem}
\usepackage[utf8]{inputenc}
\usepackage[hidelinks]{hyperref}
\usepackage{rotating}
\usepackage{xcolor}
\usepackage{mathrsfs}
\usepackage[a4paper,margin=2.3truecm]{geometry}

\newcommand{\R}{\mathbb{R}}
\newcommand{\N}{\mathbb{N}}

\newcommand{\s}{\mathbb{S}}

\renewcommand{\O}{\Omega}

\newcommand{\be}{\begin{equation}}
\newcommand{\ee}{\end{equation}}
\newcommand{\HH}{\mathdutchcal{H}}

\theoremstyle{plain}

\newtheorem{teor}{Theorem}[section]
\newtheorem{propo}[teor]{Proposition}
\newtheorem{defi}[teor]{Definition}
\newtheorem{lema}[teor]{Lemma}
\newtheorem{cor}[teor]{Corollary}
\theoremstyle{definition}
\newtheorem{rem}[teor]{Remark}
\newtheorem{example}[teor]{Example}

\DeclareMathOperator{\dive}{div}

\DeclareMathAlphabet{\mathdutchcal}{U}{dutchcal}{m}{n}

\title[fundamental frequency and torsional rigidity for anisotropic energies]{On the relations between fundamental frequency and torsional rigidity in the case of anisotropic energies}

\author{\sc Giuseppe Buttazzo\quad and\quad Raul Fernandes Horta}

\date{}
\begin{document}
\maketitle

\begin{abstract}
We consider variational energies of the form
\be\label{energy}
E_H(u)=\frac12\int_\O H^2(\nabla u)\,dx
\ee
defined on the Sobolev space $H^1_0(\O)$, where $H$ is a general seminorm. Our primary objective is to investigate optimization problems associated with the first eigenvalue  $\lambda_H(\O)$ and the torsional rigidity $T_H(\O)$ induced by the seminorm $H$. In particular, we focus on functionals of the type
\[F_{q,\O}(H)=\lambda_H(\O)\,T_H^q(\O),\]
where $q>0$ is a fixed real parameter. The optimization is performed with respect to the control $H$; we analyze both minimization and maximization problems for $F_{q,\O}(H)$, as $H$ ranges over a suitable class of seminorms.
\end{abstract}

\textbf{Keywords:} Shape optimization, spectral optimization, anisotropic energies, anisotropic torsional rigidity, anisotropic eigenvalues.

\textbf{2020 Mathematics Subject Classification:} 49J20, 49J30, 49J45, 47J20.

\section{Introduction}\label{s_intro}

In this paper, we investigate a family of variational energies of the form
\[E_H(u)=\frac12\int_\O H^2(\nabla u)\,dx\]
defined on the Sobolev space $H^1_0(\O)$, where $\O$ is a prescribed bounded open subset of $\R^d$ and $H$ is a {\it seminorm}, that is a nonnegative 1-homogeneous function on $\R^d$. Such energies naturally arise in the study of anisotropic diffusion and geometric variational problems, where the function $H$ encodes directional or structural preferences of the underlying medium.

Associated with the functional $E_H$, we define two fundamental quantities: the {\it first eigenvalue} and the {\it torsional rigidity} (or shortly, the {\it torsion}). The first eigenvalue is given by
\be\label{eigen}
\lambda_{H}(\O) = \inf_{u \in H^{1}_{0}(\O)\setminus\{0\} } \frac{\int_{\O} H^{2}(\nabla u) \ dx}{\int_{\O} u^2\ dx},
\ee
while the torsional rigidity is defined by
\be\label{torsion}
T_{H}(\O) = \sup_{u\in H^1_0(\O)\setminus\{0\}}\frac{\left(\int_\O u\ dx\right)^2}{\int_{\O} H^{2}(\nabla u) \ dx}.
\ee
These quantities play a central role in anisotropic analysis, as they describe, respectively, the fundamental frequency and compliance of the domain $\O$ under the metric induced by $H$. Indeed, the corresponding PDEs are
\[\begin{cases}
-\dive\big(H(\nabla u)\nabla H(\nabla u)\big)=\lambda u&\text{in }\O\\
u=0&\text{on }\partial\O
\end{cases}\qquad\qquad\text{for the eigenvalue,}\]
\[\begin{cases}
-\dive\big(H(\nabla u)\nabla H(\nabla u)\big)=1&\text{in }\O\\
u=0&\text{on }\partial\O
\end{cases}\qquad\qquad\text{for the torsion.}\]

Our principal goal is to study optimization problems involving the quantities $\lambda_H$ and $T_H$, where the main control variable is the seminorm $H$. To this end, we define
\begin{equation}\label{eqnorm}
\|H\|=\sup\bigg\{\frac{H(\xi)}{|\xi|}\ :\ \xi\in\R^d\setminus\{0\}\bigg\}
\end{equation}
and we consider the space
\[\HH = \left\{H:\R^d\to\R\ |\ H\text{ is a seminorm}\right\}.\]

The problems of minimizing or maximizing $\lambda_H$ and $T_H$ over the class $\HH$, normalized by $\|H\|=1$, have been previously investigated in the literature (see \cite{HM,HM2,HM3,HHM}), and we briefly recall the main results in Section \ref{s_preli}.
In the present work, we focus on a mixed optimization framework involving the cost functional
\[F_{q,\O}(H)=\lambda_H(\O)\,T_H^q(\O),\]
where $q>0$ is a fixed real parameter. Both the minimization and maximization problems for the functional $F_{q,\O}$ within the admissible class $\HH$ are interesting, as they capture different balances between rigidity and flexibility in the anisotropic setting. Observe that, since $\lambda_H$ is an increasing function of $H$, while $T_H$ is decreasing, the two contributions appearing in the functional $F_{q,\O}$ act in competition with each other. As a consequence, the balance between these opposing effects plays a fundamental role in the optimization process. In particular, the exponent $q$ becomes a key parameter in the analysis, as it determines the relative weight of the two terms and thus has a decisive influence on the qualitative and quantitative properties of the optimal choice of $H$.

We also point out that optimization problems characterized by a similar interplay between competing terms have already been studied in the literature. For instance, in \cite{BBG24} and \cite{BBP22}, the optimization is performed with respect to the domain $\O$, rather than the parameter $H$, while in \cite{BCS26} an analogous problem is considered, in which the classical Dirichlet boundary condition is replaced by a Robin boundary condition.

We investigate the existence of optimal seminorms $\HH_{opt}$, and analyze their dependence on the exponent $q$. In particular, we establish conditions under which the optimal seminorm $\HH_{opt}$ is in fact a {\it norm}, thereby revealing geometric and analytical rigidity phenomena underlying the optimal configurations.

In the final section, we provide additional comments and outline some open questions that, in our view, deserve further investigation.

\section{Preliminary results}\label{s_preli}

In this section, we introduce the notation that will be used throughout the remainder of the paper and provide a brief overview of the principal results from the existing literature.

The space $\HH$ is a closed convex subset of the Banach space of $1$-homogeneous functions, the norm of this space is the one given by \eqref{eqnorm}. This space has the Heine-Borel property, i.e. a bounded and closed set is compact. If $H \in \HH$, then the set $\ker{H} = H^{-1}(\{0\})$ is a linear subspace, so it is interesting to consider the following subsets, for $m \in \{0,\dots,d\}$
\[\begin{split}
&\HH_{m} = \left\{H \colon \R^d \to \R\ |\ H\text{ is a seminorm and }\operatorname{codim}\ker{H}=m\right\},\\
&\HH_{\le m} = \bigcup_{k=0}^{m} \HH_{k}.
\end{split}\]
We remark that $\HH_{d}$ is the set of norms, which is a dense open set of $\HH$. We also have that $\HH_{\le m}$ is a closed set. It is trivial that $\HH_{0} = 0$, and for $m = 1$ we have the following characterization
\[
H \in \HH_{1} \ \text{if and only if there is} \ \eta \in \R^d\setminus\{0\} \ \text{such that} \ H(\xi) = |\langle \xi,\eta\rangle| \ \text{for every} \ \xi \in \R^d.
\]
For proofs of this fact see \cite{HHM}. Another interesting subset of $\HH$ that will be investigated is the one of quadratic seminorms, defined by
\[
\mathcal{Q}=\{H\in\HH\ :\ H^2\text{ is a quadratic form}\}.
\]
When considering variational problems, the sphere of $\HH$, defined by
\[
\s(\HH) = \left\{H \in \HH \colon \lVert H \rVert = 1\right\},
\]
will be of interest, as well as the spheres of the subsets were previously defined, denoted by 
\[\begin{split}
&\s(\HH_{m}) = \s(\HH)\cap\HH_{m},\\
&\s(\HH_{\leq m}) = \s(\HH)\cap\HH_{\leq m},\\
&\s(\mathcal{Q}) = \s(\HH)\cap\mathcal{Q}.
\end{split}\]
We also introduce the notation of some important subsets of $\R^d$. For each $d\in\N$ and $a\in\R^d$, we consider the ellipsoid
\[
E_{d}(a) = \left\{x \in \R^{d} \colon \sum_{i=1}^{d}\frac{x_i^2}{a_i^2} < 1\right\},
\]
and we set
\[
B^d = E_{d}((1,\dots,1)) = \left\{\xi \in \R^d \colon |\xi| < 1\right\},\qquad\s^d = \left\{\xi \in \R^d \colon |\xi| =1\right\}.
\]
It is well known that
\[
|B^d| = \omega_d = \frac{\pi^{\frac{d}{2}}}{\Gamma\left(\frac{d+2}{2}\right)},\qquad \mathcal{H}^{d-1}(\s^{d-1}) = d\omega_d. 
\]

\begin{defi}[Generalized First Eigenvalue and Torsion] Let $\O$ be an open set and $H \in \HH$, then we define
\[\begin{split}
&\lambda_{H}(\O) = \inf_{u \in H^{1}_{0}(\O)\setminus\{0\} } \frac{\int_{\O} H^{2}(\nabla u) \ dx}{\int_{\O} u^2\ dx}\\
&T_{H}(\O) = \sup_{u \in H^{1}_{0}(\O)\setminus\{0\} } \frac{\left(\int_{\O} u\ dx\right)^2}{\int_{\O} H^{2}(\nabla u) \ dx}.
\end{split}\]
If $H$ is the Euclidean norm, then we simply write $\lambda_{H}(\O) = \lambda(\O)$ and $T_{H}(\O) = T(\O)$.
\end{defi}

It is well-known that
\[
T(E_{d}(a)) =\frac{\omega_d}{(d+2)}\left(\prod_{i=1}^{d}a_i\right)\left(\sum_{i=1}^{d}\frac{1}{a_i^2}\right)^{-1}.
\]

\begin{propo}\label{propochange} Let $\O$ be a bounded domain, $A$ an invertible matrix and $H_{A}(\xi) = H(A\xi)$. Then
\[\begin{split}
&\lambda_{H_A}(A^T\O) = \lambda_{H}(\O),\\
&T_{H_A}(A^T\O) =  |\det A|T_{H}(\O).
\end{split}\]
\end{propo}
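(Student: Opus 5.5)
The plan is a change of variables that gives a bijection between $H^1_0(\O)$ and $H^1_0(A^T\O)$ and transforms the integrands correctly. Given $u\in H^1_0(\O)$, define $v(y)=u(A^{-T}y)$ for $y\in A^T\O$, i.e.\ $v(A^Tx)=u(x)$. Since $x\mapsto A^Tx$ is a linear bijection, $u\mapsto v$ is a bijection between $H^1_0(\O)$ and $H^1_0(A^T\O)$. This holds because composition with an invertible linear map preserves $H^1$ and maps $C^\infty_c(\O)$ onto $C^\infty_c(A^T\O)$, so it passes to the closures.

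Next, compute the gradient by the chain rule: $\nabla u(x)=A\,\nabla v(A^Tx)$. Hence $H_A$ evaluated at the right gradient gives the original integrand, and it is natural to express everything in terms of $v$. Writing the integrals over $\O$ with the substitution $y=A^Tx$, $dy=|\det A|\,dx$, we get
\[
\int_{A^T\O}H_A^2(\nabla v)\,dy=\int_{A^T\O}H^2(A\nabla v(y))\,dy=|\det A|\int_\O H^2(\nabla u(x))\,dx .
\]
By the same substitution,
\[
\int_{A^T\O}v^2\,dy=|\det A|\int_\O u^2\,dx,\qquad \int_{A^T\O}v\,dy=|\det A|\int_\O u\,dx .
\]

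Plugging these identities into the quotients finishes the argument. For the eigenvalue, the factor $|\det A|$ cancels between numerator and denominator, so the Rayleigh quotients of $u$ and $v$ coincide. Taking the infimum over the bijection gives $\lambda_{H_A}(A^T\O)=\lambda_H(\O)$. For the torsion, the numerator scales by $|\det A|^2$ and the denominator by $|\det A|$, so each quotient is multiplied by $|\det A|$. Taking the supremum gives $T_{H_A}(A^T\O)=|\det A|\,T_H(\O)$.

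The only step needing care is the chain-rule bookkeeping. One must check that the gradient transforms by $A$ rather than by $A^T$, and this is exactly why the domain is $A^T\O$ rather than $A\O$. One should also note that $H_A$ is again a seminorm, so the quantities are well defined; no compactness or existence of minimizers is needed.
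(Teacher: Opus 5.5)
Your proposal is correct and is essentially the paper's own proof: the same substitution $v=u(A^{-T}\cdot)$, the chain rule $\nabla u(x)=A\,\nabla v(A^Tx)$, and the Jacobian factor $|\det A|$ in the three integrals, which cancels in the Rayleigh quotient and survives once in the torsion quotient. You also state explicitly that $u\mapsto v$ is a bijection of $H^1_0(\O)$ onto $H^1_0(A^T\O)$, which is what turns equality of quotients into equality of the infimum and supremum; the paper leaves this step implicit.
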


\begin{proof} For any function $u \in H^{1}_{0}(\O)$ we define $u_{A}(x) = u((A^{-1})^Tx)$, so $u_A \in H^{1}_{0}(A^T\O)$ and
\[
\int_{A^T\O} |u_{A}(y)|^2\, dy = |\det A|\int_{\O} |u(x)|^2\, dx.
\]
We have
\[
\int_{A^T\O} H_{A}^{2}(\nabla u_{A}(y))\, dy = \int_{A^T\O} H^2(A A^{-1} \nabla u ((A^{-1})^Tx))\, dx = |\det A|\int_{\O} H^2(\nabla u(x))\,dx,
\]
implying $\lambda_{H}(\O) = \lambda_{H_A}(A^T\O)$. Notice also that
\[
\int_{A^T\O} u_{A}(y)\, dy = |\det A|\int_{\O} u(x)\, dx,
\]
therefore, $T_{H_A}(A^T\O) = |\det A|T_{H}(\O)$.
\end{proof}

The proof for the following proposition can be found in \cite{HM3,HHM}.

\begin{propo}\label{propocomputedegenerate} If $H(\xi,\eta) = G(\eta)$ for every $(\xi,\eta) \in \R^{d-m}\times\R^m$. Then, setting $\O_x=\{y\in\R^m\ :\ (x,y)\in\O\}$ for every $x\in\R^{d-m}$, we have
\[\begin{split}
&\lambda_H(\O)=\inf\Big\{\lambda_G(\O_x)\ :\ x\in\R^{d-m}\Big\}\\
&T_H(\O)=\int_{\R^{d-m}}T_G(\O_x)\,dx.
\end{split}\]
\end{propo}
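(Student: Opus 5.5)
The idea is to reduce both quantities to problems on the slices $\O_x$, using the fact that $H(\nabla u)=G(\nabla_y u)$ contains no $x$-derivatives. We write $u(x,\cdot)$ for the function $y\mapsto u(x,y)$. For $u\in H^1_0(\O)$, Fubini gives
\[
\int_\O H^2(\nabla u)\,dx\,dy=\int_{\R^{d-m}}\int_{\O_x}G^2(\nabla_y u(x,y))\,dy\,dx .
\]
For a.e.\ $x$, the slice $u(x,\cdot)$ lies in $H^1_0(\O_x)$; for smooth compactly supported $u$ this is clear, and it passes to limits along a subsequence. Both inequalities in each identity come from this slicing.

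\emph{Eigenvalue.} Set $\Lambda=\inf_x\lambda_G(\O_x)$. For the lower bound, apply the definition of $\lambda_G(\O_x)$ on each slice to get $\int_{\O_x}G^2(\nabla_y u)\,dy\ge\Lambda\int_{\O_x}u^2\,dy$. Integrating in $x$ yields $\lambda_H(\O)\ge\Lambda$. For the upper bound, fix $x_0$ and $\varphi\in C^\infty_c(\O_{x_0})$ with Rayleigh quotient close to $\lambda_G(\O_{x_0})$. Since $\O$ is open and $\operatorname{supp}\varphi$ is compact, $\{x\}\times\operatorname{supp}\varphi\subset\O$ for all $x$ in a small ball $B_\delta(x_0)$. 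Take $u(x,y)=\psi(x)\varphi(y)$ with $\psi\in C^\infty_c(B_\delta(x_0))$. The quotient of $u$ equals exactly the quotient of $\varphi$, because $\nabla_y u=\psi\nabla\varphi$ and $G$ is $1$-homogeneous. Hence $\lambda_H(\O)\le\lambda_G(\O_{x_0})$, and taking the infimum over $x_0$ gives the claim. One caveat: the infimum should range over $x$ with $\O_x\neq\emptyset$, with the convention $\lambda_G(\emptyset)=+\infty$.

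\emph{Torsion.} Here I use the equivalent form $T_H(\O)=\sup_u\big(2\int u-\int H^2(\nabla u)\big)$, which follows from optimizing over scalings $tu$. The same holds for $T_G$ on each slice. This form is additive, which is what makes slicing work.
\begin{itemize}
\item \emph{Upper bound.} For any $u$, apply the slice inequality $2\int_{\O_x}u-\int_{\O_x}G^2(\nabla_y u)\le T_G(\O_x)$ and integrate in $x$.
\item \emph{Lower bound.} Construct nearly optimal test functions slice by slice. Approximate $\O$ from inside by finite unions of products $Q\times K$, with $Q$ a small cube and $K\subset\O_x$ compact for all $x\in Q$. Alternatively, use a measurable selection of near-maximizers $w_x\in H^1_0(\O_x)$ and check that $u(x,y)=w_x(y)$ lies in $H^1_0(\O)$.
\end{itemize}
The second route is delicate, because $u$ may fail to be weakly differentiable in $x$. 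I would therefore argue via lower semicontinuity/monotone approximation instead. The map $x\mapsto T_G(\O_x)$ is lower semicontinuous, by the same compact-support argument as in the eigenvalue case. On each cube $Q$, choose $\varphi_Q\in C^\infty_c$ supported in $\O_x$ for all $x\in Q$, with energy close to $T_G(\O_{x_Q})$, and sum the functions $\mathbf 1$-cutoff$(x)\varphi_Q(y)$. The cutoff in $x$ costs nothing, since only $y$-gradients enter the energy.

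The main obstacle is this torsion lower bound: showing that $\int T_G(\O_x)\,dx$ is attained in the limit by admissible functions on $\O$. Two ingredients are needed. First, lower semicontinuity of $x\mapsto T_G(\O_x)$, which gives the Riemann-type approximation from below through Fatou/monotone convergence. Second, the observation that the energy does not see $x$-oscillations, so piecewise-in-$x$ constructions with smooth cutoffs lose only an arbitrarily small amount.
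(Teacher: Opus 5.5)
You handle the eigenvalue identity and the torsion upper bound $T_H(\O)\le\int T_G(\O_x)\,dx$ correctly. The slicing of $H^1_0$ functions, the product test functions $\psi(x)\varphi(y)$, and the additive form of $T$ are all fine; the paper itself only cites this proposition, so I am judging your argument on its own.

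\textbf{The gap: the torsion lower bound.} You flag this as the crux but do not close it. A single $\varphi_Q$ admissible on every slice of $Q$ must be supported in the open set
\[
V_Q=\{y:\ \bar Q\times\{y\}\subset\O\}.
\]
So your construction realizes at most $T_G(V_Q)$ per unit $x$-volume on $Q$.

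\emph{This can be strictly less than $\inf_{x\in Q}T_G(\O_x)$.} For the sheared strip $\{0<x<1,\ x<y<x+1\}$, every slice has torsion $\tfrac1{12}$, but an interval $Q$ of length $h$ gives $T(V_Q)=(1-h)^3/12$.

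\emph{Asking for energy close to $T_G(\O_{x_Q})$ is worse.} Take $C\subset(0,1)$ closed, nowhere dense, of positive measure, and $\O=(0,1)^2\setminus(C\times\{\tfrac12\})$. Every interval $Q$ meeting $C$ forces $J(\varphi_Q)\le T\big((0,\tfrac12)\cup(\tfrac12,1)\big)=\tfrac1{48}$. Yet such a $Q$ also contains tags with $T(\O_{x_Q})=\tfrac1{12}$, and this happens at every scale on a set of measure at least $|C|$.

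\emph{Lower semicontinuity cannot repair this.} Lower semicontinuity of $x\mapsto T_G(\O_x)$ is a statement about individual slices near a point. What must converge is the value $T_G(V_Q)$ attainable by one function common to all slices of $Q$.

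\textbf{The missing step: run your compact-support argument on $V_Q$ itself.} $V_Q$ is open by the tube lemma. Take dyadic closed cubes $Q_k(x)\ni x$ of side $2^{-k}$, with $x$ off the grid. Then $V_{Q_k(x)}$ increases to $\O_x$, and every compact $K\subset\O_x$ lies in $V_{Q_k(x)}$ for large $k$. Hence $T_G(V_{Q_k(x)})\to T_G(\O_x)$.

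Now set $J(\varphi)=2\int\varphi-\int G^2(\nabla\varphi)$ and choose:
\begin{itemize}
\item $\varphi_Q\in C^\infty_c(V_Q)$ with $J(\varphi_Q)\ge\max\{0,\,T_G(V_Q)-2^{-k}\}$;
\item $\psi_Q\in C^\infty_c(\operatorname{int}Q)$ with $0\le\psi_Q\le1$ and $\int\psi_Q\ge(1-\eta)|Q|$.
\end{itemize}
Then $u_k=\sum_Q\psi_Q\varphi_Q\in C^\infty_c(\O)$. The supports in $x$ are disjoint and $\psi_Q^2\le\psi_Q$, so
\[
2\int_\O u_k-\int_\O H^2(\nabla u_k)\ \ge\ \sum_Q\Big(\int\psi_Q\Big)J(\varphi_Q)\ \ge\ (1-\eta)\int g_k,\qquad g_k=\sum_Q J(\varphi_Q)\,\mathbf 1_Q .
\]
Since $T_G(V_{Q_k(x)})-2^{-k}\le g_k(x)\le T_G(\O_x)$, and everything is bounded by $T_G$ of a ball containing all slices, dominated convergence gives $\int g_k\to\int T_G(\O_x)\,dx$. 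Letting $\eta\to0$ yields the lower bound.

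Alternatively, you could keep point tags, provided you take them at Lebesgue points of $x\mapsto T_G(\O_x)$. The radii must then be below both the admissibility radius and the Lebesgue-point scale, and you conclude with a Vitali covering.
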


\begin{cor} Let $H(\xi) = |\xi_d|$, then
\[
\lambda_H\left(\prod_{i=1}^{d}(a_i,b_i)\right) = \frac{\pi^2}{(b_d-a_d)^2}\qquad\text{and}\qquad T_H\left(\prod_{i=1}^{d}(a_i,b_i)\right) = \frac{(b_d-a_d)^3}{12}\prod_{i=1}^{d-1}(b_i-a_i).
\]
\end{cor}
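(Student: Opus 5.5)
This follows directly from Proposition \ref{propocomputedegenerate} with $m=1$. We split $\R^d=\R^{d-1}\times\R$, writing points as $(x,y)$ with $x\in\R^{d-1}$ and $y\in\R$, and take $G(\eta)=|\eta|$ on $\R$, so that $H(\xi)=|\xi_d|=G(\xi_d)$.

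For the rectangle $\O=\prod_{i=1}^d(a_i,b_i)$, the slice $\O_x$ depends on where $x$ lies:
\begin{itemize}
\item if $x\in\prod_{i=1}^{d-1}(a_i,b_i)$, then $\O_x=(a_d,b_d)$;
\item otherwise $\O_x=\emptyset$.
\end{itemize}
For an empty slice we use the conventions $\lambda_G(\emptyset)=+\infty$ and $T_G(\emptyset)=0$. These match the definitions, since $H^1_0(\emptyset)=\{0\}$, so the infimum is over the empty set and the supremum is trivially zero. Hence the infimum in the proposition only sees nonempty slices, and the integral only sees the base rectangle.

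So everything reduces to the one-dimensional quantities for $G=|\cdot|$ on an interval $I=(a_d,b_d)$ of length $L=b_d-a_d$. There $H^2_G(u')=|u'|^2$, so $\lambda_G(I)$ and $T_G(I)$ are the classical Dirichlet eigenvalue and torsion of $I$.

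\textbf{Eigenvalue.} The eigenfunction is $\sin(\pi(y-a_d)/L)$, which gives $\lambda_G(I)=\pi^2/L^2$. Taking the infimum over the (all equal) nonempty slices yields the first formula.

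\textbf{Torsion.} The torsion function solves $-w''=1$ on $I$ with zero boundary values, so $w(y)=(y-a_d)(b_d-y)/2$. Since $T_G(I)=\int_I w\,dy$, and the integral of $(y-a_d)(b_d-y)$ over $I$ is $L^3/6$, we get $T_G(I)=L^3/12$. This value can also be read off from the paper's ellipsoid formula with $d=1$, $a=L/2$ and $\omega_1=2$:
\[
T(E_1(L/2))=\frac{2}{3}\cdot\frac{L}{2}\cdot\frac{L^2}{4}=\frac{L^3}{12}.
\]
Integrating this constant over $x\in\prod_{i=1}^{d-1}(a_i,b_i)$ multiplies it by $\prod_{i=1}^{d-1}(b_i-a_i)$, which is the second formula.

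There is no real obstacle; the argument is routine. The only points needing care are the empty-slice conventions and checking that the sup-quotient definition of $T$ equals $\int w$ for the torsion function. For the latter, the maximizer of the quotient is proportional to $w$, and testing the equation $-w''=1$ against $w$ gives $\int|w'|^2=\int w$, so the quotient equals $(\int w)^2/\int w=\int w$.
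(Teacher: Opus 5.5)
Your proof is correct and takes the same route as the paper. You apply Proposition~\ref{propocomputedegenerate} with $m=1$ and $G=|\cdot|$, observe that every nonempty slice equals $(a_d,b_d)$ over the base rectangle, and insert the one-dimensional values $\pi^2/L^2$ and $L^3/12$; your treatment of empty slices and your check that the sup-quotient equals $\int w$ only spell out what the paper leaves implicit.
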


\begin{proof} For the eigenvalue
\[
\lambda_{H}\left(\prod_{i=1}^{d}(a_i,b_i)\right) = \inf_{x \in \R^{d-m}}\lambda\left(\left(\prod_{i=1}^{d}(a_i,b_i)\right)_x\right) = \inf_{x \in \R^{d-m}}\frac{\pi^2}{|(a_d,b_d)|^2} = \frac{\pi^2}{(b_d-a_d)^2}.
\]
Now, for the torsion, we have
\begin{align*}
T_{H}\left(\prod_{i=1}^{d}(a_i,b_i)\right) = \int_{\prod_{i=1}^{d-1}(a_i,b_i)}T(a_d,b_d) \ dx = \frac{(b_d-a_d)^3}{12}\prod_{i=1}^{d-1}(b_i-a_i),
\end{align*}
as required.
\end{proof}

\begin{propo}\label{propoellipsoid}
Let $H_{v}(\xi)=|\langle\xi,v\rangle|$ with $v\in\R^d\setminus\{0\}$ and let $a = (a_1,\dots,a_d)$ with $a_i > 0$ for every $i \in \{1,\dots,d\}$. Then,
\[\begin{split}
&\lambda_{H_v}(E_d(a)) = \frac{\pi^2}{4}\left(\sum_{i=1}^{d}\frac{v_i^2}{a_i^2}\right),\\
&T_{H_v}(E_d(a)) = \frac{\omega_d}{(d+2)}\left(\prod_{i=1}^{d}a_i\right)\left(\sum_{i=1}^{d}\frac{v_i^2}{a_i^2}\right)^{-1}.
\end{split}\]
\end{propo}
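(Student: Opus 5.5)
Since $\lambda_{H_v}$ and $T_{H_v}$ are invariant under orthogonal changes of variables on both sides (apply Proposition \ref{propochange} with $A$ orthogonal), the plan is to reduce to the case $H(\xi)=|\xi_d|$ by a linear change of variables and then use Proposition \ref{propocomputedegenerate} on the resulting ellipsoid.

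\emph{Step 1: reduction to $v=e_d$.} Choose an invertible matrix $A$ with $A^T e_d = v$, that is, the last row of $A$ equals $v^T$. Then $H_{e_d}(A\xi)=|\langle A\xi,e_d\rangle|=|\langle\xi,A^Te_d\rangle|=H_v(\xi)$, so $H_v=(H_{e_d})_A$. Proposition \ref{propochange} then gives
\[
\lambda_{H_v}(E_d(a))=\lambda_{H_{e_d}}\big((A^T)^{-1}E_d(a)\big),\qquad
T_{H_v}(E_d(a))=|\det A|\,T_{H_{e_d}}\big((A^T)^{-1}E_d(a)\big).
\]
Write $D=\operatorname{diag}(a_1,\dots,a_d)$, so that $E_d(a)=DB^d$, and set $M=(A^T)^{-1}D$. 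Then $\O':=(A^T)^{-1}E_d(a)=MB^d$ is an ellipsoid with $|\det M|=\prod a_i/|\det A|$.

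\emph{Step 2: slicing.} By Proposition \ref{propocomputedegenerate} with $m=1$, $\lambda_{H_{e_d}}(\O')=\inf_x \pi^2/\ell(x)^2$, where $\ell(x)$ is the length of the vertical chord of $\O'$ over $x\in\R^{d-1}$. Also $T_{H_{e_d}}(\O')=\int \ell(x)^3/12\,dx$, since the one-dimensional torsion of an interval of length $\ell$ is $\ell^3/12$. For an ellipsoid, $\ell(x)=L\sqrt{1-Q(x)}$ for a positive definite quadratic form $Q$ on $\R^{d-1}$, where $L$ is the maximal chord length in direction $e_d$.
\begin{itemize}
\item For the eigenvalue this gives $\pi^2/L^2$.
\item For the torsion, integrate over the projection $\{Q<1\}$, which has volume $c\,\omega_{d-1}$. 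Using $\int_{B^{d-1}}(1-|y|^2)^{3/2}\,dy$ and $|\O'|=\int\ell=L\,c\,\omega_{d-1}\int_{B^{d-1}}(1-|y|^2)^{1/2}dy/\omega_{d-1}$, the ratio of these Beta-type integrals equals $\tfrac{3}{d+2}$. Hence
\[
T=\frac{L^2}{12}\cdot\frac{3}{d+2}\,|\O'|=\frac{L^2|\O'|}{4(d+2)}.
\]
\end{itemize}
Combining with $|\det A|\,|\O'|=\omega_d\prod a_i$, this yields the torsion formula once $L^2=4/\sum v_i^2/a_i^2$.

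\emph{Step 3: computing $L$.} This is the main computational point. The width of the ellipsoid $E_d(a)$ in the direction dual to $v$ is controlled by its support function $h(v)=\big(\sum a_i^2 v_i^2\big)^{1/2}$. However, we need the length of the maximal chord parallel to $w:=A^{-T}$-image of $e_d$, and the clean way is to work directly in the original variables. The chords of $\O'$ in direction $e_d$ correspond to chords of $E_d(a)$ in direction $w=A^Te_d$-dual. I would instead avoid this by a more direct route: slice $E_d(a)$ by the level sets of $\langle x,v\rangle$ (the $e_d$-direction after the change of variables corresponds to $x\mapsto\langle x,v\rangle$ being the "active" coordinate).

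Concretely, along lines $x+tw$ with $\langle w,v\rangle=1$, the function $u$ only "sees" $t$. The maximal chord of $E_d(a)$ measured in the parameter $\langle x,v\rangle$ has length
\[
2\max_{x\in E_d(a)}\langle x,v\rangle=2h(v).
\]
That is the wrong expression, so the correct parametrization must be set up carefully. Since the stated answer involves $\sum v_i^2/a_i^2$ rather than $\sum a_i^2v_i^2$, I expect the correct bookkeeping to be as follows. Since $\nabla u_A = A^{-1}$-twisted, the relevant chord direction is $D^{-1}$-related, and the answer should come out as $L^2=4/\sum v_i^2/a_i^2$: the maximal chord of $E_d(a)$ in direction $v$ has half-length $\big(\sum v_i^2/a_i^2\big)^{-1/2}$ times $|v|$-normalization.

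To settle this, I will first check the case $d=1$ or $a=(1,\dots,1)$, $v=e_d$, where the stated formulas must reduce to the Corollary on slices (for instance $\lambda=\pi^2/4$ for the unit ball). I will then choose $A$ with $A^Te_d=v$ explicitly, say by completing $v$ to a basis, and track the chord direction carefully. I expect this orientation and duality bookkeeping to be the only real obstacle; the rest is the standard Beta-integral identity.
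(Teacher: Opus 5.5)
\textbf{Verdict: there is a genuine gap.} Your Steps 1 and 2 are sound. With $A^Te_d=v$, Proposition~\ref{propochange} reduces everything to $H_{e_d}$ on $\O'=(A^T)^{-1}E_d(a)$. The vertical chords of an ellipsoid have length $\ell(x)=L\sqrt{1-Q(x)}$, and the ratio $\int_{B^{d-1}}(1-|y|^2)^{3/2}dy\,/\int_{B^{d-1}}(1-|y|^2)^{1/2}dy=\frac{3}{d+2}$ gives $T_{H_{e_d}}(\O')=L^2|\O'|/(4(d+2))$. Combined with $|\det A|\,|\O'|=|E_d(a)|$, this gets the constant more cleanly than the paper, which identifies $|E_M|$ through a Cholesky factorization.

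However, Step 3 is where all the dependence on $v$ and $a$ sits, and you do not carry it out. You first try the support function, which measures the width between hyperplanes orthogonal to $v$; that is the wrong object. You then speak of a ``dual'' direction, and finally adopt $L^2=4/\sum_i v_i^2/a_i^2$ because that is what the statement needs. As written, the proposal proves neither formula.

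\textbf{The missing step is short, and there is no duality involved.} Since $y\in\O'$ iff $A^Ty\in E_d(a)$, and $A^T(y_0+te_d)=A^Ty_0+tv$, the vertical chord of $\O'$ over $y_0$ has length $|\{t\in\R:\ x_0+tv\in E_d(a)\}|$ with $x_0=A^Ty_0$. So the relevant chords are the chords of $E_d(a)$ parallel to $v$, measured in the parameter $t$ (Euclidean length divided by $|v|$). Set $\alpha=\sum_i v_i^2/a_i^2$, $\beta=\sum_i x_{0,i}v_i/a_i^2$ and $\gamma=\sum_i x_{0,i}^2/a_i^2$. The membership condition becomes $\alpha t^2+2\beta t+\gamma-1<0$. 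This is an interval of length $2\sqrt{\beta^2-\alpha(\gamma-1)}/\alpha$. By Cauchy--Schwarz ($\beta^2\le\alpha\gamma$) this is at most $2/\sqrt{\alpha}$, with equality at $x_0=0$. Hence $L=2\alpha^{-1/2}$, independently of the choice of $A$, and both formulas follow from your Step 2. This is exactly the computation in the paper's proof, done there with $A$ orthogonal so that $t$ is arc length.
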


\begin{proof} We can assume without loss of generality that $v\in\s^{d-1}$. To simplify notation, write $E_d(a) = E$. There is $M=(m_{ij})_{1\leq i\leq d, 1\leq j\leq d}\in O(d)$ such that $H_{v}(M\xi)=|\xi_d|$; notice that $x\in M^TE$ if and only if $Mx\in E$, i.e.
\[
\sum_{i=1}^{d}\frac{1}{a_i^2}\left(\sum_{j=1}^{d}m_{ij}x_{j}\right)^{2} < 1.
\]
Hence
\[
\left(\sum_{i=1}^{d}\frac{m_{id}^2}{a_{i}^{2}}\right)x_d^2 + 2\left(\sum_{i=1}^{d}\frac{1}{a_i^2}\sum_{j=1}^{d-1}m_{ij}m_{id}x_{j}\right)x_{d}+ \left(\sum_{i=1}^{d}\frac{1}{a_i^2}\left(\sum_{j=1}^{d-1}m_{ij}x_{j}\right)^2-1\right) < 0.
\]
Denote $J = (\delta_{ij})_{1\leq i\leq d, 1\leq j\leq d-1} \in \R^{d\times (d-1)}$. For every $x = (x_1,\dots,x_{d-1}) \in \R^{d-1}$ such that $(M^TE)_x \neq \emptyset$, we have
\begin{align*}
|(M^TE)_x| &= \left(\sum_{i=1}^{d}\frac{m_{id}^2}{a_{i}^{2}}\right)^{-1}\left[4\left(\sum_{i=1}^{d}\frac{m_{id}}{a_{i}^{2}}\sum_{j=1}^{d-1}m_{ij}x_{j}\right)^2-4\left(\sum_{i=1}^{d}\frac{m_{id}^2}{a_{i}^{2}}\right)\left(\sum_{i=1}^{d}\frac{1}{a_{i}^{2}}\left(\sum_{j=1}^{d-1}m_{ij}x_{j}\right)^2-1\right)\right]^{\frac{1}{2}} \\
&= 2\left(\sum_{i=1}^{d}\frac{m_{id}^2}{a_{i}^{2}}\right)^{-1} \left[\left(\sum_{i=1}^{d}\frac{m_{id}^2}{a_{i}^{2}}\right)+\left(\sum_{i=1}^{d}\frac{m_{id}}{a_{i}}\frac{(MJx)_{i}}{a_{i}}\right)^2-\left(\sum_{i=1}^{d}\frac{m_{id}^2}{a_{i}^{2}}\right)\left(\sum_{i=1}^{d}\frac{((MJx)_{i})^2}{a_{i}^{2}}\right)\right]^{\frac{1}{2}}.
\end{align*}
Now, denoting $A = (\delta_{ij} a_{i})_{1\leq i\leq d, 1\leq j\leq d} \in \R^{d\times d}$ and noticing that since $Me_{d} = v$ then $m_{id} = v_i$, we have
\begin{align*}
|(M^TE)_x| &= 2\left(\sum_{i=1}^{d}\frac{v_i^2}{a_i^2}\right)^{-1}\left[\left(\sum_{i=1}^{d}\frac{v_i^2}{a_i^2}\right)+ (\langle A^{-1}v,A^{-1}MJx\rangle)^2 -|A^{-1}v|^2|A^{-1}MJx|^2\right]^{\frac{1}{2}} \\
&= 2\left(\sum_{i=1}^{d}\frac{v_i^2}{a_i^2}\right)^{-\frac{1}{2}}\left[1-\left(|A^{-1}MJx|^2-\frac{(\langle A^{-1}v,A^{-1}MJx\rangle)^2}{|A^{-1}v|^2}\right)\right]^{\frac{1}{2}}\\
&\leq 2\left(\sum_{i=1}^{d}\frac{v_i^2}{a_i^2}\right)^{-\frac{1}{2}},
\end{align*}
where the last line is due to Cauchy-Schwarz inequality, with equality achieved when $x = 0$. Hence
\[\begin{split}
\lambda_{H_{v}}(E)&=\lambda_{H_{v}\circ M}(M^TE) = \inf_{x \in \R^{d-1}}\lambda(0,|(M^TE)_x|)\\
&=\inf_{x \in \R^{d-1}}\frac{\pi^2}{|(M^TE)_x|^2} = \frac{\pi^2}{4}\left(\sum_{i=1}^{d}\frac{v_i^2}{a_i^2}\right).
\end{split}\]
Now notice that the set $E_M = \left\{x \in \R^{d-1}\colon (M^TE)_x \neq \emptyset\right\}$ is an ellipsoid that can be written as
\[
E_M = \left\{x \in \R^{d-1} \colon |A^{-1}MJx|^2-\frac{(\langle A^{-1}v,A^{-1}MJx\rangle)^2}{|A^{-1}v|^2} < 1\right\}.
\]
Let $C = J^TM^TAR^TJ$, where $R \in O(d)$ is such that $R(A^{-1}v) = |A^{-1}v|e_d$ and $Z = (\delta_{id})$. Notice that $\ker{(A^{-1}MZM^TA)} =\{A^{-1}v\}^{\perp}$, hence
\[
A^{-1}MZM^TAy = \frac{\langle A^{-1}v,y\rangle A^{-1}v}{|A^{-1}v|^2}\qquad\text{for every }y\in\R^d.
\]
Therefore, for every $x \in \R^{d-1}$ we have
\[\begin{split}
A^{-1}MJCx&=A^{-1}MJJ^TM^TAR^TJx\\
&=A^{-1}M(I-Z)M^TAR^TJx\\
&=A^{-1}MM^TAR^TJx -A^{-1}MZM^TAR^TJx\\
&=R^TJx-A^{-1}MZM^TAR^TJx\\
&=R^TJx-\frac{\langle A^{-1}v,R^TJx\rangle A^{-1}v}{|A^{-1}v|^2}=R^TJx,
\end{split}\]
which gives
\[\begin{split}
|A^{-1}MJCx|^2-\frac{(\langle A^{-1}v,A^{-1}MJCx\rangle)^2}{|A^{-1}v|^2}&=|R^TJx|^2- \frac{(\langle A^{-1}v,R^TJCx\rangle)^2}{|A^{-1}v|^2}\\
&=|R^TJx|^2 = |Jx|^2 = |x|^2.
\end{split}\]
Hence $C^{-1}E_M=B^{d-1}$, meaning $\omega_{d-1}|\det{C}|=|E_M|$. Therefore
\begin{align*}
T_{H_{v}}(E) &= T_{H_{v}\circ M}(M^TE) = \int_{E_M}\frac{|(M^TE)|_x^3}{12} \ dx \\ 
&=\frac{1}{12}\int_{E_M}8\left(\sum_{i=1}^{d}\frac{v_i^2}{a_i^2}\right)^{-\frac{3}{2}}\left[1-\left(|A^{-1}MJx|^2-\frac{(\langle A^{-1}v,A^{-1}MJx\rangle)^2}{|A^{-1}v|^2}\right)\right]^{\frac{3}{2}} \ dx \\
&=\frac{2}{3}\left(\sum_{i=1}^{d}\frac{v_i^2}{a_i^2}\right)^{-\frac{3}{2}} |\det{C}|\int_{B^{d-1}}(1-|x|^2)^{\frac{3}{2}} \ dx \\ 
&= \frac{2}{3}\left(\sum_{i=1}^{d}\frac{v_i^2}{a_i^2}\right)^{-\frac{3}{2}}|\det{C}|\int_{0}^{1}\int_{\s^{d-2}}(1-r^2)^{\frac{3}{2}}r^{d-2} \ d\omega \ dr\\
&= \left(\sum_{i=1}^{d}\frac{v_i^2}{a_i^2}\right)^{-\frac{3}{2}}|\det{C}|\frac{2(d-1)\omega_{d-1}}{3}\int_{0}^{1}(1-r^2)^{\frac{3}{2}}r^{d-2} \ dr \\
&=\left(\sum_{i=1}^{d}\frac{v_i^2}{a_i^2}\right)^{-\frac{3}{2}}|\det{C}|\frac{2(d-1)\omega_{d-1}}{3}\frac{\Gamma\left(\frac{d-1}{2}\right)\Gamma\left(\frac{5}{2}\right)}{2\Gamma\left(\frac{d+4}{2}\right)}\\
&=\frac{\omega_{d}}{(d+2)\omega_{d-1}}\left(\sum_{i=1}^{d}\frac{v_i^2}{a_i^2}\right)^{-\frac{3}{2}}|E_M|=\frac{1}{(d+2)}\left(\sum_{i=1}^{d}\frac{v_i^2}{a_i^2}\right)^{-1}|E|.
\end{align*}
The last equality is justified as follows: first notice that for every $x\in\R^{d-1}$ we have $ZRAMJx = ce_{d}$ for some $c \in \R$, hence
\[\begin{split}
J^TM^TAR^TZRAMJx&=J^TM^TAR^T(ce_d)\\
&=J^TM^TA(c|A^{-1}v|^{-1}A^{-1}v)\\
&=c|A^{-1}v|^{-1}J ^TM^Tv\\
&=c|A^{-1}v|^{-1}J^Te_d = 0.
\end{split}\]
Therefore $J^TM^TAR^TZRAMJ = 0$, which implies
\begin{align*}
|E_M| &= \omega_{d-1}|\det{C}| = \omega_{d-1} |\det{(J^TM^TAR^TJ)}| \\
&= \omega_{d-1}\sqrt{\det{(J^TM^TAR^TJJ^TRAMJ)}} \\
&= \omega_{d-1}\sqrt{\det{(J^TM^TAR^T(I-Z)RAMJ)}} \\
&= \omega_{d-1}\sqrt{\det{((J^TM^TA^2MJ)-J^TM^TAR^TZRAMJ)}} \\
&= \omega_{d-1}\sqrt{\det{(J^TM^TA^2MJ)}}.
\end{align*}
Since $M^TA^2M$ is positive definite, it admits a Cholesky decomposition as $LL^T$, where $L = (l_{ij})$ is upper triangular, hence
\[
|E_M| = \omega_{d-1}\sqrt{\det{(J^TLL^TJ)}} = \omega_{d-1} \sqrt{\prod_{i=1}^{d-1}l_{ii}^2} = \omega_{d-1}\prod_{i=1}^{d-1} l_{ii}.
\]
On the other hand
\[
|E| = \omega_d|\det{A}| = \omega_d\sqrt{\det(M^TA^2M)} = \omega_d\prod_{i=1}^{d} l_{ii}
\]
and
\[\begin{split}
\left(\sum_{i=1}^{d}\frac{v_i^2}{a_i^2}\right)^{-\frac{1}{2}}&=\frac{1}{|A^{-1}v|}=\frac{1}{|MA^{-1}M^Te_d|}\\
&=\frac{1}{|(M^TAM)^{-1}e_d|} =\frac{1}{\sqrt{\langle (LL^T)^{-1}e_d,e_d\rangle}}\\
&=\frac{1}{|L^{-1}e_d|} = \frac{1}{l_{dd}}.
\end{split}\]
Therefore
\[
\left(\sum_{i=1}^{d}\frac{v_i^2}{a_i^2}\right)^{-\frac{1}{2}}\frac{|E_M|}{\omega_{d-1}} = \prod_{i=1}^{d} l_{ii} = \frac{|E|}{\omega_d}
\]
as needed.
\end{proof}

\begin{cor}\label{corellipsoid} Let $H_{i}(\xi) = |\xi_i|$ and let $a = (a_1,\dots,a_d)$ with $a_i > 0$ for every $i \in \{1,\dots,d\}$. Then,
\[
\lambda_{H_i}(E_d(a)) = \frac{\pi^2}{4a_i^2}\qquad\text{and}\qquad T_{H_i}(E_d(a)) = \left(\prod_{j=1}^{d}a_j\right)\frac{\omega_d a_i^2}{d+2}.
\]    
Consequently, for every $e \in \s^{d-1}$ and $H(\xi) = |\langle \xi,e\rangle|$ we have
\[
\lambda_{H}(B^d) = \frac{\pi^2}{4}\qquad\text{and}\qquad T_{H}(B^d) = \frac{\omega_d}{d+2}.
\]  
\end{cor}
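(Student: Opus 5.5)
This is a direct specialization of Proposition \ref{propoellipsoid}, so the plan is simply to plug in the right vector $v$ and simplify.

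For the first part, I take $v=e_i$, the $i$-th standard basis vector. Then $H_{e_i}(\xi)=|\langle \xi,e_i\rangle|=|\xi_i|=H_i(\xi)$. Since $v_j=\delta_{ij}$, the sum appearing in Proposition \ref{propoellipsoid} reduces to a single term:
\[
\sum_{j=1}^{d}\frac{v_j^2}{a_j^2}=\frac{1}{a_i^2}.
\]
Substituting this, the eigenvalue formula gives $\lambda_{H_i}(E_d(a))=\frac{\pi^2}{4a_i^2}$. The torsion formula gives
\[
T_{H_i}(E_d(a))=\frac{\omega_d}{d+2}\Big(\prod_{j=1}^d a_j\Big)a_i^2,
\]
which is exactly the stated expression.

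For the second part, I take $a=(1,\dots,1)$, so that $E_d(a)=B^d$, and $v=e\in\s^{d-1}$. The relevant sum becomes
\[
\sum_{j=1}^{d} e_j^2=|e|^2=1,
\]
and $\prod_j a_j=1$. Proposition \ref{propoellipsoid} then yields directly $\lambda_H(B^d)=\pi^2/4$ and $T_H(B^d)=\omega_d/(d+2)$.

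As a consistency check, and as an alternative route, the ball case also follows from the first part with $a_i=1$. Indeed, for any $M\in O(d)$ with $Me_d=e$ we have $H(M\xi)=|\xi_d|$ and $M^TB^d=B^d$. Proposition \ref{propochange}, applied with $A=M$ and $|\det M|=1$, shows that $\lambda$ and $T$ are unchanged under this rotation.

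There is no real obstacle here. The only point needing care is matching the notation: $H_v$ with $v=e_i$ must be identified with $H_i$, and $E_d(1,\dots,1)$ with $B^d$.
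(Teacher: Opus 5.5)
Your proposal is correct and matches the paper, which gives no separate proof of this corollary but presents it as an immediate specialization of Proposition~\ref{propoellipsoid}: take $v=e_i$ for the first part, and $a=(1,\dots,1)$, $v=e$ for the ball. Your alternative route through rotation invariance, using Proposition~\ref{propochange} with $M\in O(d)$, $Me_d=e$, is also valid.
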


We recall here the results on min/max for $\lambda_H$ and $T_H$, the proofs for $\lambda_{H}$ are given in \cite{HM,HHM} and the proofs for $T_{H}$ are analogous. The isoanisotropic variational constants are defined by
\[\begin{split}
&\lambda_{\min}(\O):=\inf\big\{\lambda_H(\O)\ :\ H\in\s(\HH)\big\},\\
&\lambda_{\max}(\O):=\sup\big\{\lambda_H(\O)\ :\ H\in\s(\HH)\big\},\\
&T_{\min}(\O):=\inf\big\{T_H(\O)\ :\ H\in\s(\HH)\big\},\\
&T_{\max}(\O):=\sup\big\{T_H(\O)\ :\ H\in\s(\HH)\big\}.
\end{split}\]

\begin{defi} Given a bounded open set $\O$, the associated \textbf{directional width function} $L:\s^{n-1}\to\R$, denoted $L_\omega(\O)$, is defined as
\[
L_\omega(\O):=\sup_{v\in\R^n} \sup\Big\{|I|\ :\ I\subset\left\{t\omega + v \colon t \in \R\right\}\cap \O, \ I \ \text{is connected}\Big\}.
\]
\end{defi}

We recall the following results, that can be found on \cite{HM,HHM}.

\begin{teor} Let $\O \subset \R^{d}$ be a bounded domain. Then
\[
\lambda_{\min}(\O):=\inf\big\{\lambda_H(\O)\ :\ H\in\s(\HH_{1})\big\}=\inf_{\omega\in\s^{d-1}}\frac{\pi^2}{L_{\omega}^2(\O)}.
\]
Moreover, the infimum is attained if and only if $\O$ has optimal anisotropic design, that is the function $\omega\mapsto L_\omega(\O)$ has a global maximum. In this case, if $\omega_{0}$ is a global maximum point, the seminorm
\[
H_{0}(\xi) = |\langle\xi,\omega_{0}\rangle|
\]
is an anisotropic extremizer for $\lambda_{\min}(\O)$. Furthermore, if $\partial\O$ is $C^{0,1}$ then all anisotropic extremizers are in $\s(\HH_{1})$. In particular, if $\O$ is convex the minimum is attained and $\omega_0$ is given by the direction of the diameter of $\O$, and
\[
\lambda_{\min}(\O) = \frac{\pi^2}{(\operatorname{diam}(\O))^2}.
\]
\end{teor}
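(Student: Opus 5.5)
We propose to prove the theorem in four steps: a reduction to rank-one seminorms, a slicing computation, the attainment criterion, and the convex case.

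\textbf{Step 1: reduction to $\s(\HH_1)$.} Take $H\in\s(\HH)$ and a unit vector $\omega$ with $H(\omega)=\|H\|=1$; such a vector exists by continuity and compactness of $\s^{d-1}$. The unit ball of $H$ is convex, symmetric and contained in the Euclidean unit ball, and $\omega$ lies on its boundary. A supporting hyperplane to it at $\omega$ must therefore be $\{\langle \xi,\omega\rangle=1\}$, because the Euclidean ball is tangent there. It follows that $H(\xi)\ge|\langle\xi,\omega\rangle|$ for all $\xi$. Hence $\lambda_H(\O)\ge\lambda_{H_\omega}(\O)$ with $H_\omega(\xi)=|\langle\xi,\omega\rangle|\in\s(\HH_1)$, and the two infima coincide.

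\textbf{Step 2: computing $\lambda_{H_\omega}(\O)$.} Rotate so that $\omega=e_d$; by Proposition \ref{propochange} with an orthogonal matrix this changes nothing. Proposition \ref{propocomputedegenerate} with $m=1$ then gives $\lambda_{H_\omega}(\O)=\inf_x\lambda(\O_x)$, where $\O_x\subset\R$ is open. The one-dimensional Dirichlet eigenvalue of an open subset of $\R$ is $\pi^2/\ell^2$, with $\ell$ the length of its longest component. If there is no longest component, one takes the supremum of the lengths and the infimum is not attained, but the value $\pi^2/\sup^2$ persists. Taking the infimum over $x$ yields $\pi^2/L_\omega^2(\O)$, and taking the infimum over $\omega$ gives the formula.

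\textbf{Step 3: attainment.} If $\omega\mapsto L_\omega$ attains its maximum at $\omega_0$, then $H_0=H_{\omega_0}$ is a minimizer by Step 2. Conversely, suppose a minimizer $H$ exists. Step 1 gives $\lambda_H\ge\pi^2/L_\omega^2\ge\inf$, so equality holds and $L_\omega$ equals $\sup L$ at this $\omega$; thus the maximum is attained.

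For the claim that every extremizer lies in $\s(\HH_1)$ when $\partial\O$ is Lipschitz, we would argue by strictness. Let $u$ be an eigenfunction for $H$. If $H\ne H_\omega$, then $H^2(\nabla u)>\langle\nabla u,\omega\rangle^2$ on a set of positive measure, unless $\nabla u$ stays in the contact cone $\{H=H_\omega\}$ almost everywhere. We must exclude this second alternative. The plan is to use the Lipschitz boundary to show that $u$ cannot vary only along the set where $H=|\langle\cdot,\omega\rangle|$ without being one-dimensional, and then derive a contradiction. This is the step we expect to be the main obstacle, and we would follow the argument of \cite{HHM} there.

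\textbf{Step 4: convex case.} For convex $\O$ every line section is an interval, so $L_\omega(\O)$ is the maximal chord length in direction $\omega$. This is continuous in $\omega$ and has maximum $\operatorname{diam}(\O)$, attained in the direction of a diameter. Therefore $\lambda_{\min}(\O)=\pi^2/\operatorname{diam}(\O)^2$.
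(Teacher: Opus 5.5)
Your Steps 1, 2 and 4, and the attainment equivalence in Step 3, are correct. They give the formula for $\lambda_{\min}(\O)$, the characterization of attainment, and the convex case. The paper only quotes this theorem from the literature, so there is no in-paper proof to compare against.

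There is a slip in Step 1. Since $\lVert H\rVert=1$ means $H\le|\cdot|$, the unit ball $\{H\le1\}$ \emph{contains} the Euclidean ball rather than being contained in it. It is this inclusion that forces the supporting hyperplane at $\omega$ to be $\{\langle\xi,\omega\rangle=1\}$. With the inclusion you wrote, the argument fails: a vertex of an inscribed polygon has many supporting lines. A clean version: a subgradient $p$ of $H$ at $\omega$ satisfies $\langle p,\omega\rangle=H(\omega)=1$ and $\langle p,\eta\rangle\le H(\eta)\le|\eta|$ for all $\eta$. Hence $p=\omega$, and by symmetry $H\ge|\langle\cdot,\omega\rangle|$.

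The genuine gap is the claim that, for $C^{0,1}$ boundary, every extremizer lies in $\s(\HH_1)$. Deferring it to the literature is not a proof. Your sketch also cannot be run as written, because it starts from ``an eigenfunction $u$ for $H$'', and for degenerate $H$ the infimum defining $\lambda_H(\O)$ is in general not attained in $H^1_0(\O)$. For example, take $H(\xi)=|\xi_2|$ on the unit disc. Any $u$ with Rayleigh quotient $\pi^2/4$ must vanish on almost every vertical slice, because $\lambda(\O_x)>\pi^2/4$ for $x\ne0$. A competing extremizer $H\ne H_\omega$ may well be degenerate, so there is no $u$ to work with.

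What is actually needed is the strict inequality $\lambda_H(\O)>\pi^2/L_\omega^2(\O)$ whenever $H\ge H_\omega$, $H\ne H_\omega$ and $\lVert H\rVert=1$. One route:
\begin{enumerate}
\item Reduce to codimension two. The polar body of $H$ (the set of $p$ with $\langle p,\cdot\rangle\le H$) contains $\pm\omega$ and some $p\notin[-\omega,\omega]$. Since $|p|\le1$, this $p$ is not parallel to $\omega$. Hence $H\ge G:=\max\{|\langle\cdot,\omega\rangle|,|\langle\cdot,p\rangle|\}\in\s(\HH_2)$.
\item By Proposition~\ref{propocomputedegenerate}, $\lambda_G(\O)$ is an infimum over two-dimensional slices. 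So you need a strict lower bound for the slice problems that is \emph{uniform} in the slice parameter.
\end{enumerate}
This uniform bound is delicate for two reasons. First, $G^2-\langle\cdot,\omega\rangle^2$ vanishes on the cone $\{|\langle\xi,p\rangle|\le|\langle\xi,\omega\rangle|\}$, which has nonempty interior because $|\langle p,\omega\rangle|<1$; so almost-optimal functions can tilt at no cost. Second, slices of a Lipschitz domain need not be Lipschitz. The Lipschitz regularity of $\partial\O$, together with the maximality of $L_\omega$ over all directions (no chord longer than $L_\omega$ in any direction), must enter precisely here. None of this is supplied, so that part of the statement remains unproved.
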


\begin{teor}Let $\O\subset \R^{d}$ be a bounded domain. Then
\[
\lambda_{\max}(\Omega):=\sup\big\{\lambda_H(\O)\ :\ H\in\s(\HH_{d})\big\}=\lambda(\O).
\]
Moreover, if $\partial\Omega$ satisfies the Wiener condition, then the only anisotropic extremizer is the Euclidean norm.
\end{teor}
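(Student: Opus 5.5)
The plan is to prove the two halves of the statement separately. The first half is the identity $\lambda_{\max}(\O)=\lambda(\O)$ with the supremum taken over $\s(\HH_d)$. The second half is uniqueness of the maximizer under the Wiener condition.

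\emph{Upper bound.} For every $H\in\s(\HH)$ the definition \eqref{eqnorm} gives $H(\xi)\le\|H\|\,|\xi|=|\xi|$ for all $\xi$. Hence $H^2(\nabla u)\le|\nabla u|^2$ pointwise, and inserting this into the Rayleigh quotient \eqref{eigen} gives $\lambda_H(\O)\le\lambda(\O)$ for every admissible $H$, including degenerate seminorms. Since the Euclidean norm lies in $\s(\HH_d)$ and realizes $\lambda(\O)$, the supremum is attained and equals $\lambda(\O)$, both over $\s(\HH_d)$ and over all of $\s(\HH)$.

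\emph{Uniqueness.} Let $H\in\s(\HH)$ satisfy $\lambda_H(\O)=\lambda(\O)$, and suppose $H\ne|\cdot|$. Then there is $\omega_0\in\s^{d-1}$ with $H(\omega_0)<1$. By continuity of $H$ there are $\delta>0$ and an open cone $C$ around $\pm\omega_0$ on which $H(\xi)\le(1-\delta)|\xi|$. Let $u$ be a first Euclidean Dirichlet eigenfunction of $\O$. Then
\[\lambda(\O)=\lambda_H(\O)\le\frac{\int_\O H^2(\nabla u)\,dx}{\int_\O u^2\,dx}\le\lambda(\O)-\delta'\frac{\int_{\{\nabla u\in C\}}|\nabla u|^2\,dx}{\int_\O u^2\,dx},\]
with $\delta'=1-(1-\delta)^2>0$. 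It therefore suffices to show that the set $\{x\in\O:\nabla u(x)\in C\setminus\{0\}\}$ has positive measure. This is the crux, and it is where the Wiener condition enters. Since $u$ is real analytic and positive in $\O$, and continuous up to $\partial\O$ with $u=0$ there by Wiener regularity, I would consider a line $\{t\omega_0+v\}$ meeting $\O$. Along the segment of this line contained in $\O$, the function $t\mapsto u(t\omega_0+v)$ is positive inside and vanishes at the two endpoints on $\partial\O$; continuity up to the boundary is exactly what the Wiener condition provides. So it increases somewhere and decreases somewhere, and at a maximum point $\partial_{\omega_0}u=0$. This alone does not force $\nabla u$ into $C$, so I would argue differently.

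Integrate $\partial_{\omega_0}u$ over a small tube of such segments to see that $\int_\O(\partial_{\omega_0}u)_+^2>0$. If $\nabla u$ avoided $C$ almost everywhere, then $|\partial_{\omega_0}u|\le\cos\theta\,|\nabla u|$ for the cone aperture $\theta$, which is not yet a contradiction. A cleaner route is to replace the eigenfunction $u$ by the rescaled function $u\circ\Phi$, where $\Phi$ is a slight stretch along $\omega_0$ adapted to a subdomain. The simplest alternative is to prove strict monotonicity in a strong form: if $\lambda_H(\O)=\lambda(\O)$, then $u$ is also an $H$-minimizer, so $H^2(\nabla u)=|\nabla u|^2$ a.e. Hence $\nabla u(x)\in Z=\{\xi:H(\xi)=|\xi|\}$, a closed cone not containing $\omega_0$. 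By analyticity, $\nabla u$ maps $\O$ into $Z$. However, the gradient of $u$ near a boundary point where $u$ attains zero, reached along every direction of approach guaranteed by Wiener regularity, should sweep all directions. I expect making this last sweeping claim rigorous to be the main obstacle.

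My first attempt at it would use the fact that $u$ has an interior maximum point $x_0$ with $D^2u(x_0)\le0$. Consider the level sets $\{u>t\}$ for $t$ close to $\max u$ and $t$ close to $0$. At each regular value, the outward normal $-\nabla u/|\nabla u|$ of the superlevel set covers the whole sphere, because these sets are bounded with smooth boundary (Gauss map surjectivity). Since almost every $t$ is a regular value by Sard, $\nabla u$ takes a direction in $C$, so $\nabla u\in C$ on a nonempty open set, contradicting $\nabla u(\O)\subset Z$. Note that this Gauss-map step needs only that the superlevel sets $\{u>t\}$ are compactly contained in $\O$, which is exactly where continuity of $u$ up to a Wiener-regular boundary is used.
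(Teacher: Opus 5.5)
The paper gives no proof of this theorem; it only recalls it from \cite{HM,HHM}. So there is no in-text argument to compare with. Read as its final chain of steps, your proposal is a correct, self-contained proof.

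\emph{First half.} It is exactly right. $\|H\|=1$ means $H\le|\cdot|$, hence $\lambda_H(\O)\le\lambda(\O)$ for all $H\in\s(\HH)$, and the Euclidean norm lies in $\s(\HH_d)$. This also shows that the supremum over $\s(\HH_d)$ equals the supremum over $\s(\HH)$, which is how the paper defined $\lambda_{\max}$ earlier.

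\emph{Uniqueness.} The chain of steps is as follows.
\begin{enumerate}
\item Equality $\lambda_H(\O)=\lambda(\O)$ together with $H\le|\cdot|$ forces $H(\nabla u)=|\nabla u|$ a.e.\ for the first Euclidean eigenfunction $u$.
\item Since $\nabla u$ is continuous, this equality holds at every point of $\O$.
\item The Wiener condition, together with $-\Delta u=\lambda u\in L^\infty$, gives $u\in C(\overline\O)$ with $u=0$ on $\partial\O$. Hence, for a regular value $t\in(0,\max u)$, the set $\{u\ge t\}$ is a compact subset of $\O$.
\item Maximizing $\langle x,-\omega_0\rangle$ over $\{u\ge t\}$ gives a point $x^*$ with $u(x^*)=t$ and $\nabla u(x^*)\neq0$.
\item The first-order condition yields $\nabla u(x^*)=|\nabla u(x^*)|\,\omega_0$, which contradicts $H(\omega_0)<1$.
\end{enumerate}
This proves uniqueness in all of $\s(\HH)$, which is more than the statement requires.

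\emph{Suggestions for the write-up.}
\begin{itemize}
\item Delete the abandoned detours: the positive-measure set $\{\nabla u\in C\}$, the line segments, the stretch $\Phi$, and the ``sweeping'' heuristic. Once equality holds pointwise, a single point $x^*$ suffices, so you need neither the open cone $C$ nor an open set of such points.
\item Make the Gauss-map step explicit, as in items 4 and 5 above, rather than just invoking ``surjectivity''.
\item You can avoid Sard altogether. Let $x_0$ be a maximum point of $u$, set $M=u(x_0)$, and take $0<\varepsilon<M/\operatorname{diam}(\O)$. The function $u(x)-\varepsilon\langle x-x_0,\omega_0\rangle$ is at most $\varepsilon\operatorname{diam}(\O)<M$ on $\partial\O$. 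So its maximum over $\overline\O$ is attained at an interior point, where $\nabla u=\varepsilon\omega_0$, and $H(\varepsilon\omega_0)<\varepsilon$ gives the contradiction at once.
\end{itemize}
In both versions, the Wiener condition enters only through continuity of $u$ up to $\partial\O$ with zero boundary values. That is the right place for it.
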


\begin{teor}Let $\O\subset\R^{d}$ be a bounded domain. Then
\[
T_{\min}(\Omega):=\inf\big\{T_H(\O)\ :\ H\in\s(\HH_{d})\big\} = T(\O).
\]
Moreover, if $\partial\Omega$ satisfies the Wiener condition, then the only anisotropic extremizer is the Euclidean norm.
\end{teor}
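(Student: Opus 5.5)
We need to prove: $T_{\min}(\O)=\inf\{T_H(\O): H\in\s(\HH_d)\}=T(\O)$, with the Euclidean norm as the unique extremizer under the Wiener condition.

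The basic observation is monotonicity. Every $H\in\s(\HH)$ satisfies $H(\xi)\le\|H\|\,|\xi|=|\xi|$ for all $\xi$, so $H^2(\nabla u)\le|\nabla u|^2$ pointwise. Hence for every $u\in H^1_0(\O)\setminus\{0\}$
\[
\frac{\left(\int_\O u\,dx\right)^2}{\int_\O H^2(\nabla u)\,dx}\ \ge\ \frac{\left(\int_\O u\,dx\right)^2}{\int_\O |\nabla u|^2\,dx}.
\]
The left side is understood as $+\infty$ when its denominator vanishes and $\int u\ne0$. Taking suprema gives $T_H(\O)\ge T(\O)$ for every $H\in\s(\HH)$, in particular on $\s(\HH_d)$. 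The Euclidean norm belongs to $\s(\HH_d)$ and gives $T(\O)$. Therefore the infimum equals $T(\O)$ and is attained there, whether we take it over $\s(\HH)$ or over $\s(\HH_d)$.

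For uniqueness, suppose $H\in\s(\HH)$ satisfies $T_H(\O)=T(\O)$. Let $w$ be the Euclidean torsion function, i.e.\ the maximizer for $T(\O)$, solving $-\Delta w=1$ in $H^1_0(\O)$. Then
\[
T(\O)=T_H(\O)\ \ge\ \frac{\left(\int w\right)^2}{\int H^2(\nabla w)}\ \ge\ \frac{\left(\int w\right)^2}{\int |\nabla w|^2}=T(\O).
\]
So $\int_\O\big(|\nabla w|^2-H^2(\nabla w)\big)\,dx=0$. Since the integrand is nonnegative, $H(\nabla w)=|\nabla w|$ almost everywhere in $\O$.

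It then suffices to show that the set $\{\nabla w(x)/|\nabla w(x)|\}$ is dense in $\s^{d-1}$. Since $H$ and $|\cdot|$ are continuous and $1$-homogeneous, they would then coincide everywhere, so $H$ is the Euclidean norm. To get this density:
\begin{itemize}
\item $w$ is real analytic in $\O$ and positive. Under the Wiener condition it is continuous up to $\partial\O$ with $w=0$ there.
\item Take $\nu\in\s^{d-1}$ and consider the linear function $\ell(x)=\langle x,\nu\rangle$ on $\overline\O$. The function $w-\varepsilon\ell$ on $\overline\O$ has its minimum at some point. For small $\varepsilon>0$, interior points with $w\ge c>0$ beat the boundary values, where $w=0$ and $-\varepsilon\ell\ge-\varepsilon\max_{\overline\O}\ell$.
\item The minimizing point is therefore interior, where $\nabla w=\varepsilon\nu$. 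So the direction $\nu$ is attained exactly.
\end{itemize}
It remains to justify that the minimum is interior. Choose $x_0$ with $w(x_0)>0$ and compare: $w(x_0)-\varepsilon\ell(x_0)<\min_{\partial\O}(-\varepsilon\ell)$ holds once $\varepsilon\,\mathrm{osc}_{\overline\O}\ell<w(x_0)$. Every direction is thus realized, so $H=|\cdot|$ on $\s^{d-1}$.

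The main obstacle is this last step, and specifically boundary continuity. The comparison needs $w\to0$ uniformly at $\partial\O$, which is exactly what the Wiener condition supplies; without it the argument fails, consistent with the hypothesis in the statement. The rest, namely the monotonicity and the equality case, is routine.
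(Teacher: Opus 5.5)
Your monotonicity step ($H\le|\cdot|$ on $\s(\HH)$, hence $T_H(\O)\ge T(\O)$, attained by the Euclidean norm) is correct. So is the equality analysis: $T_H(\O)=T(\O)$ forces $H(\nabla w)=|\nabla w|$ a.e., hence everywhere in $\O$ by continuity of $\nabla w$. This is the natural torsion analogue of the $\lambda_{\max}$ argument that the paper imports from the literature rather than proving.

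The step that fails as written is the last one: the \emph{minimum} of $w-\varepsilon\ell$ over $\overline\O$ is never interior. A linear function attains its extrema over the compact set $\overline\O$ on $\partial\O$, and $w\ge0$ with $w=0$ on $\partial\O$. Hence for every $x\in\overline\O$
\[
w(x)-\varepsilon\ell(x)\ \ge\ -\varepsilon\max_{\partial\O}\ell=\min_{\partial\O}\,(w-\varepsilon\ell),
\]
with strict inequality wherever $w>0$. Your comparison inequality is therefore reversed. The condition $\varepsilon\,\mathrm{osc}_{\overline\O}\ell<w(x_0)$ actually gives
\[
w(x_0)-\varepsilon\ell(x_0)\ >\ -\varepsilon\min_{\overline\O}\ell=\max_{\partial\O}\,(w-\varepsilon\ell).
\]
So interior points beat the boundary values for the maximum, not the minimum.

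The repair is immediate: use the \emph{maximum} (equivalently, minimize $\varepsilon\ell-w$). By the Wiener condition, $w-\varepsilon\ell$ is continuous on $\overline\O$ with $w=0$ on $\partial\O$. The display above then shows that its maximum is attained at some $x^*\in\O$, where $\nabla w(x^*)=\varepsilon\nu$. Hence
\[
H(\nu)=\varepsilon^{-1}H(\nabla w(x^*))=\varepsilon^{-1}|\nabla w(x^*)|=1\qquad\text{for every }\nu\in\s^{d-1},
\]
and $H=|\cdot|$ by homogeneity; no density argument is needed. With this one-word correction your proof of both the value of $T_{\min}(\O)$ and the uniqueness of the extremizer is complete.
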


\begin{teor}\label{Tmax} Let $\O\subset\R^{d}$ be a bounded domain. Then
\[
T_{\max}(\O):=\sup\big\{T_H(\O)\ :\ H\in\s(\HH_{1})\big\}=\sup_{A\in O(d)}\int_{\R^{d-1}} T\big((A\O)_x\big)\,dx.
\]
Furthermore, if $\partial\O$ is $C^{0,1}$ then all possible anisotropic extremizers are in $\s(\HH_{1})$.
\end{teor}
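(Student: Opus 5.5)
The idea is to reduce to rank-one seminorms and then apply Proposition \ref{propocomputedegenerate}. Fix $H\in\s(\HH)$. The main claim is that there is $\omega\in\s^{d-1}$ with
\[
H(\xi)\ge |\langle\xi,\omega\rangle|\qquad\text{for every }\xi\in\R^d.
\]
To find $\omega$, pick $\omega$ maximizing $H$ on $\s^{d-1}$, so that $H(\omega)=\|H\|=1$. The sublevel set $K=\{H\le1\}$ is a convex, possibly unbounded, set contained in the slab determined by its supporting hyperplanes. Because $H\le|\cdot|$, the set $K$ contains $B^d$. Moreover $\omega\in\partial K$, and $\omega$ is a point of $\partial K$ at unit distance from the origin. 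The supporting hyperplane to $K$ at $\omega$ must also support $B^d$ at $\omega$, so it is $\{\langle\xi,\omega\rangle=1\}$. Hence $K\subset\{|\langle\xi,\omega\rangle|\le1\}$, using the symmetry of $K$. By homogeneity this gives $H(\xi)\ge|\langle\xi,\omega\rangle|$.

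Since $T_H$ is decreasing in $H$, the comparison yields $T_H(\O)\le T_{H_\omega}(\O)$ with $H_\omega(\xi)=|\langle\xi,\omega\rangle|\in\s(\HH_1)$. Therefore the supremum over $\s(\HH)$ equals the supremum over $\s(\HH_1)$.

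Next I compute $T_{H_\omega}(\O)$. Choose $A\in O(d)$ with $A\omega=e_d$, so that $H_\omega(\xi)=|(A\xi)_d|$. By Proposition \ref{propochange}, and since $|\det A|=1$ for an orthogonal matrix, $T_{H_\omega}(\O)$ equals the torsion of the seminorm $|\xi_d|$ on the rotated domain. Proposition \ref{propocomputedegenerate} with $m=1$ then gives $\int_{\R^{d-1}}T((A\O)_x)\,dx$, where $T$ is the one-dimensional torsion of the slices. Taking the supremum over $\omega$, equivalently over $A\in O(d)$, gives the formula. One should check that the map $A\mapsto A^T$ used in Proposition \ref{propochange} does not matter, which holds because $O(d)$ is closed under transposition.

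For the second part, suppose $\partial\O$ is $C^{0,1}$ and $H$ is an extremizer. Then $T_H(\O)=T_{H_\omega}(\O)$ with $H\ge H_\omega$, and I would show that equality forces $H=H_\omega$. Let $u$ be the torsion function for $H_\omega$. Equality in the chain
\[
T_H\ge\frac{(\int u)^2}{\int H^2(\nabla u)}\ \text{ versus }\ \frac{(\int u)^2}{\int H_\omega^2(\nabla u)}
\]
would force $H(\nabla u)=H_\omega(\nabla u)$ almost everywhere, provided the extremal value for $H$ is attained. The harder route avoids this: if $H\ne H_\omega$, then $H>H_\omega$ on an open cone of directions. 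For a Lipschitz domain the gradients of the slice torsion functions $u(x,\cdot)$, rotated, together with the boundary tilt, fill a set of positive measure in that cone, which gives strict inequality.

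This rigidity step is the main obstacle. The delicate point is that $\nabla u$ for the degenerate seminorm is only the $d$-th partial derivative, while the full gradient of the slice-wise parabola solution depends on the boundary geometry. I expect the Lipschitz regularity to be what ensures that $\nabla u$ genuinely has nonzero transversal components on a set of positive measure.
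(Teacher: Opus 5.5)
Your proof of the formula is correct. Choosing $\omega\in\s^{d-1}$ with $H(\omega)=\|H\|=1$ does give $H\ge|\langle\cdot,\omega\rangle|$. Equivalently, any $p\in\partial H(\omega)$ satisfies $H\ge\langle p,\cdot\rangle$, $|p|\le 1$ because $H\le|\cdot|$, and $\langle p,\omega\rangle=1$, so $p=\omega$. Monotonicity of $H\mapsto T_H$ together with Propositions~\ref{propochange} and~\ref{propocomputedegenerate} then gives the formula, even with the supremum over all of $\s(\HH)$. The paper itself only quotes this theorem from the literature, so there is no in-text proof to compare with.

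The genuine gap is the second assertion, which you do not prove.

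\textbf{Your first route does not pinch.} With $u$ the torsion function of $H_\omega$, the quotient $(\int u)^2/\int H^2(\nabla u)$ is bounded above by both $T_H$ and $T_{H_\omega}$. So $T_H=T_{H_\omega}$ says nothing about it. Moreover, this $u$ (the slice-wise parabola) is in general not in $H^1_0(\O)$: for a rectangle with $\omega$ parallel to a side, it does not vanish on the sides parallel to $\omega$. So it is not even admissible for $T_H$.

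\textbf{The chain that does pinch.} It uses a maximizer $w\in H^1_0(\O)$ of $T_H$, which exists when $H$ is a norm:
\[T_H=\frac{(\int w)^2}{\int H^2(\nabla w)}\le\frac{(\int w)^2}{\int H_\omega^2(\nabla w)}\le T_{H_\omega}.\]
Equality forces $w$ to be a multiple of the slice-wise torsion function $v$ of $H_\omega$. Hence $v\in H^1_0(\O)$ and $H(\nabla v)=|\partial_\omega v|$ a.e. If $H\in\HH_k$ with $2\le k\le d-1$, such a maximizer need not exist in $H^1_0(\O)$. You would then first have to slice along $\ker H\subset\omega^\perp$ using Proposition~\ref{propocomputedegenerate}. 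You would need strict inequality on a positive-measure set of $k$-dimensional slices, and these slices need not be Lipschitz. Note also that $H\in\s(\HH_1)$ together with $H\ge H_\omega$ already forces $H=H_\omega$.

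\textbf{The decisive step is still missing.} You must show that, for a Lipschitz domain, $v$ cannot lie in $H^1_0(\O)$ while $\nabla v$ takes values a.e. in the closed cone $\{H=|\langle\cdot,\omega\rangle|\}$. For a norm this cone meets $\omega^\perp$ only at $0$, which gives $|\nabla v-(\partial_\omega v)\,\omega|\le c\,|\partial_\omega v|$ a.e. This is exactly where the regularity of $\partial\O$ has to enter. For instance, take the model case where the $\omega$-slices are intervals $(f(x'),g(x'))$ with smooth endpoints, and set $L=g-f$. At the midpoints $\partial_\omega v=0$, while the transversal gradient equals $\frac{L}{4}\nabla_{x'}L$. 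So the slice length must be locally constant. This forces pieces of $\partial\O$ parallel to $\omega$ on which $v$ has nonzero trace, contradicting $v\in H^1_0(\O)$.

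Your sentence about gradients of rotated slice torsion functions ``filling a set of positive measure in the cone'' supplies no such mechanism. It also concerns the gradient of a function that is generally not admissible. As written, the characterization of the extremizers remains unproved.
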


As a consequence of Theorem~\ref{Tmax} as Proposition~\ref{propoellipsoid} we have the following results for the torsional isoanisotropic problem in ellipsoids.

\begin{cor}\label{corTmax} Let $E$ be an ellipsoid with $a_i$ the length of its semi-major axes. Then,
\[
T_{\max}(E) = \frac{\omega_d}{(d+2)}\left(\prod_{i=1}^{d}a_i\right)  \left(\max_{i \in \{1,\dots,d\}} a_i\right)^2.
\]
Moreover, let $v \in \s^{d-1}$ be a direction such that $L_{v}(E) = \operatorname{diam}(E)$ and $H_v(\xi) = |\langle \xi,v\rangle|$. Then $T_{\max} (E) = T_{H_v}(E)$ and these are the only anisotropic extremizers.
\end{cor}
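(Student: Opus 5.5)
By Theorem~\ref{Tmax}, the supremum defining $T_{\max}(E)$ may be restricted to $\s(\HH_{1})$, that is, to seminorms $H_v(\xi)=|\langle\xi,v\rangle|$ with $v\in\s^{d-1}$. Since $E$ is an ellipsoid, it has Lipschitz boundary, so the same theorem says every anisotropic extremizer lies in $\s(\HH_1)$. The problem therefore reduces to the finite-dimensional maximization of $v\mapsto T_{H_v}(E)$ over the unit sphere, followed by identifying all maximizers.

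First, use Proposition~\ref{propochange} with an orthogonal matrix $A$ to rotate $E$ to the standard position $E_d(a)$. The determinant factor is $1$, and $H_v\circ A$ is again of the form $H_{w}$ with $|w|=1$. Both $T_{\max}$ and the set of diameter directions are therefore transported consistently, and we may assume $E=E_d(a)$. Proposition~\ref{propoellipsoid} then gives
\[
T_{H_v}(E_d(a))=\frac{\omega_d}{d+2}\Big(\prod_{i=1}^d a_i\Big)\Big(\sum_{i=1}^d \frac{v_i^2}{a_i^2}\Big)^{-1}.
\]
Maximizing this amounts to minimizing $\sum_i v_i^2/a_i^2$ under the constraint $\sum_i v_i^2=1$. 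Writing $a_{\max}=\max_i a_i$, we have $\sum_i v_i^2/a_i^2\ge \sum_i v_i^2/a_{\max}^2=a_{\max}^{-2}$. Equality holds exactly when $v_i=0$ for every $i$ with $a_i<a_{\max}$, that is, when $v$ lies in the span of the coordinate directions corresponding to the longest semi-axes. This yields the claimed value $\frac{\omega_d}{d+2}\big(\prod_i a_i\big)a_{\max}^2$, which agrees with Corollary~\ref{corellipsoid}.

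It remains to show that the maximizing directions are exactly the $v$ with $L_v(E)=\operatorname{diam}(E)$. Because $E$ is convex and centrally symmetric, the longest chord in direction $v$ passes through the origin. Its half-length is $t$, where $t^2\sum_i v_i^2/a_i^2=1$, so $L_v(E)=2\big(\sum_i v_i^2/a_i^2\big)^{-1/2}$. This can also be read off from the chord-length computation in the proof of Proposition~\ref{propoellipsoid}, where the maximal slice length was attained at $x=0$. Consequently $\operatorname{diam}(E)=\max_v L_v(E)=2a_{\max}$, and $L_v(E)=\operatorname{diam}(E)$ holds if and only if $\sum_i v_i^2/a_i^2=a_{\max}^{-2}$. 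This is precisely the equality case above, so the diameter directions and the maximizers of $T_{H_v}$ coincide.

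The extremizer statement then follows, since $H_v=H_{-v}$ and every extremizer lies in $\s(\HH_1)$ by Theorem~\ref{Tmax}. The only delicate point is this uniqueness claim, and it rests entirely on the Lipschitz part of Theorem~\ref{Tmax}. Without that part, one would have to rule out extremizers of higher rank directly. The computations themselves are routine.
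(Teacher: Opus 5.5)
Your proposal is correct and follows the paper's route. The paper states this corollary as a direct consequence of Theorem~\ref{Tmax} (reduction of the supremum to $\s(\HH_1)$, with the Lipschitz clause giving uniqueness) combined with the explicit formula of Proposition~\ref{propoellipsoid}. You add the details the paper leaves implicit: the minimization of $\sum_i v_i^2/a_i^2$ over the unit sphere, and the identification $L_v(E)=2\big(\sum_i v_i^2/a_i^2\big)^{-1/2}$, which matches the diameter directions with the maximizers.
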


This result would maybe suggest that the optimal $H \in \s(\HH_1)$ for $T_{\max}(\O)$ would be the one given by direction of the diameter of $\O$. However, as the following example shows, this is not true in general, not even for strictly convex domains.

\begin{example}
Let $\O$ be the right-angle triangle
\[
\O = \left\{(x,y) \in \R^2 \colon 0 < x < 1, 0 <y < 1-x\right\}.
\]
Now let $v_1 = (0,1)$ and $v_2 = \left(\frac{1}{\sqrt{2}},\frac{1}{\sqrt{2}}\right)$ and for each $v$ we consider $H_v$ as in the statement of Corollary~\ref{corTmax}. Notice that if $R_{\theta}$ is the rotation matrix of angle $\theta$, then $H_{v_2}(R_{\frac{\pi}{4}}(x,y)) = |y| = H_{v_1}(x,y)$, also
\[
R_{\frac{\pi}{4}}^T\O = \left\{(x,y) \in \R^2 \colon -\frac{1}{\sqrt{2}} < x < \frac{1}{\sqrt{2}}, |x| < y < \frac{1}{\sqrt{2}}\right\}.
\]
Therefore, from Propositions~\ref{propochange} and~\ref{propocomputedegenerate} we have
\[
T_{H_{v_1}}(\O) = \int_{0}^{1}T(\O_x) \ dx = \int_{0}^{1}\frac{|\O_x|^3}{12} \ dx = \frac{1}{12}\int_{0}^{1}(1-x)^3 \ dx = \frac{1}{48}
\]
and
\begin{align*}
T_{H_{v_2}}(\O) &= T_{H_{v_1}}(R_{\frac{\pi}{4}}^T\O) = \int_{-\frac{1}{\sqrt{2}}}^{\frac{1}{\sqrt{2}}}T((R_{\frac{\pi}{4}}^T\O)_x) \ dx =  \int_{-\frac{1}{\sqrt{2}}}^{\frac{1}{\sqrt{2}}} \frac{\left(\frac{1}{\sqrt{2}}-|x|\right)^3}{12} \ dx \\
&= \frac{1}{6}\int_{0}^{\frac{1}{\sqrt{2}}}\left(\frac{1}{\sqrt{2}}-x\right)^3 \ dx = \frac{1}{6}\cdot\frac{1}{4}\left(\frac{1}{\sqrt{2}}\right)^4 = \frac{1}{96}.
\end{align*}
Hence $L_{v_2}(\O) = \operatorname{diam}(\O)$, but $T_{H_{v_2}}(\O) < T_{H_{v_1}}(\O) \leq T_{\max}(\O)$.
\end{example}

\section{Continuity Properties}\label{scont}

In the present section we study the continuity properties of the maps $H\mapsto\lambda_H(\O)$ and $H\mapsto T_H(\O)$. We start by recalling the following result in \cite{HHM}.

\begin{teor}\label{semicont}Let $\Omega\subset\R^{n}$ be a bounded open domain. The functions 
\[\begin{split}
&H\mapsto T_H(\O)\qquad\text{from $\HH\to\R$}\\
&H\mapsto \lambda_H(\O)\qquad\text{from $\HH\to\R$}\\
\end{split}\]
are lower semicontinuous and upper semicontinuous respectively. Moreover, they are locally Lipschitz in $\HH_d$ and continuous on $0$.
\end{teor}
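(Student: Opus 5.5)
We treat the two maps separately but in parallel, exploiting that both are defined through Rayleigh-type quotients with $H$ entering only through the energy $\int_\O H^2(\nabla u)\,dx$.

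\textbf{Semicontinuity.} For fixed $u\in H^1_0(\O)\setminus\{0\}$, the map $H\mapsto\int_\O H^2(\nabla u)\,dx$ is continuous on $\HH$. Indeed, $|H_n(\xi)-H(\xi)|\le\|H_n-H\|\,|\xi|$ by definition of the norm \eqref{eqnorm}, so
\[
\Big|\int_\O H_n^2(\nabla u)-H^2(\nabla u)\,dx\Big|\le \|H_n-H\|\big(\|H_n\|+\|H\|\big)\int_\O|\nabla u|^2\,dx .
\]
Consequently $\lambda_H(\O)$ is an infimum of continuous functions of $H$, hence upper semicontinuous. Similarly, $T_H(\O)$ is a supremum of the continuous functions $H\mapsto(\int u)^2/\int H^2(\nabla u)$. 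These are continuous where the denominator is positive; where it vanishes the quotient equals $+\infty$, and the map is still lower semicontinuous with values in $[0,+\infty]$. So $T_H(\O)$ is lower semicontinuous.

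\textbf{Local Lipschitz continuity on $\HH_d$.} If $H\in\HH_d$, compactness of $\s^{d-1}$ gives $c_H:=\min_{|\xi|=1}H(\xi)>0$, so $c_H|\xi|\le H(\xi)$. For $\|G-H\|\le\delta<c_H/2$ we have
\[
\Big(1-\frac{\delta}{c_H}\Big)H\le G\le\Big(1+\frac{\delta}{c_H}\Big)H,
\]
since $|G-H|\le\delta|\xi|\le(\delta/c_H)H$. By monotonicity and $2$-homogeneity of both quantities in $H$, namely $\lambda_{tH}=t^2\lambda_H$ and $T_{tH}=t^{-2}T_H$, we get
\[
\big(1-\delta/c_H\big)^2\lambda_H\le\lambda_G\le\big(1+\delta/c_H\big)^2\lambda_H ,
\]
and the analogous inequalities for $T$ with the exponents reversed. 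Since $c_G\ge c_H/2$ in this neighbourhood, this yields $|\lambda_G-\lambda_H|\le C\|G-H\|$ and $|T_G-T_H|\le C\|G-H\|$ with $C$ depending only on $c_H$, $\lambda_H(\O)$ and $T_H(\O)$. These last two are finite, because $H$ is comparable to the Euclidean norm and the Euclidean $\lambda(\O)$ and $T(\O)$ are finite for bounded $\O$.

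\textbf{Continuity at $0$.} We have $\lambda_0=0$ and $T_0=+\infty$. The bound $H\le\|H\|\,|\xi|$ gives
\[
\lambda_H(\O)\le\|H\|^2\lambda(\O)\to0,\qquad T_H(\O)\ge T(\O)/\|H\|^2\to+\infty,
\]
which is continuity at $0$ in the extended sense.

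The only delicate point is the convention for $T$ when $\ker H\ne\{0\}$, where $T_H$ may be $+\infty$ in general. Lower semicontinuity must then be read in $[0,+\infty]$, and this should be stated explicitly.
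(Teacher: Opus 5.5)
Your proof is correct. The paper gives no proof of Theorem~\ref{semicont}; it only recalls the result from \cite{HHM}. Your argument is the standard one: $\lambda_H(\O)$ and $T_H(\O)$ are an infimum, resp.\ supremum, of quotients that depend continuously on $H$, and the local Lipschitz bound comes from the sandwich $(1-\delta/c_H)H\le G\le(1+\delta/c_H)H$ with $\delta=\|G-H\|$, combined with monotonicity and $\lambda_{tH}=t^2\lambda_H$, $T_{tH}=t^{-2}T_H$. This yields exactly the estimate $|T_G-T_H|\le C_H\,T_H\,\|G-H\|$ that the paper later invokes in the proofs of Theorems~\ref{layercontinuousC1} and~\ref{layercontinuous}.

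One small correction to your closing remark. For bounded $\O$ one has $T_H(\O)<\infty$ for every $H\neq0$:
\begin{itemize}
\item pick a unit vector $v\perp\ker H$, so that $H(\xi)\ge c\,|\langle\xi,v\rangle|$ for some $c>0$;
\item use monotonicity in $H$ and in the domain;
\item apply the slicing formula of Proposition~\ref{propocomputedegenerate} on a box containing $\O$.
\end{itemize}
Hence the value $+\infty$ occurs only at $H=0$. That is the only point where the codomain $\R$ in the statement has to be read in the extended sense.
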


\begin{teor}\label{layercontinuousC1} Let $\O$ be a $C^1$ domain and $k \in \{0,1,\dots,d\}$. The map
\[
H\mapsto T_H(\O)\qquad\text{from $\HH_k\to\R$}
\]
is continuous.  
\end{teor}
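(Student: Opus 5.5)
The plan is to combine the lower semicontinuity already given by Theorem~\ref{semicont} with an upper semicontinuity estimate along sequences that stay inside $\HH_k$. So let $H_n\to H$ in $\HH_k$. We must show $\limsup_n T_{H_n}(\O)\le T_H(\O)$. The cases $k=0$ (trivial) and $k=d$ (covered by Theorem~\ref{semicont}) can be set aside, so assume $0<k<d$.

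\textbf{Step 1: align the kernels.} Set $V_n=\ker H_n$ and $V=\ker H$, all of dimension $d-k$. By compactness of the Grassmannian we may pass to a subsequence with $V_n\to W$. Since $H_n\to H$ uniformly on $\s^{d-1}$, we get $W\subset V$, and equality of dimensions gives $W=V$. Hence we can choose $R_n\in O(d)$ with $R_n\to I$ and $R_nV=V_n$, and set $\tilde H_n=H_n\circ R_n$. Then $\ker\tilde H_n=V$ and $\tilde H_n\to H$. Proposition~\ref{propochange} with $|\det R_n|=1$ gives $T_{H_n}(\O)=T_{\tilde H_n}(R_n^T\O)$. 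Rotating coordinates so that $V=\R^{d-k}\times\{0\}$, we can write $\tilde H_n(\xi,\eta)=G_n(\eta)$ and $H(\xi,\eta)=G(\eta)$, where $G_n,G$ are norms on $\R^k$ and $G_n\to G$.

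\textbf{Step 2: remove the dependence on $H_n$.} Since $G$ is a norm and $G_n\to G$ uniformly on the sphere, for every $\varepsilon>0$ we have $G_n\ge(1-\varepsilon)G$ for $n$ large. This implies $T_{G_n}(A)\le(1-\varepsilon)^{-2}T_G(A)$ for every open set $A$. Applying Proposition~\ref{propocomputedegenerate} slice by slice then gives
\[
T_{H_n}(\O)\le(1-\varepsilon)^{-2}\,T_H(R_n^T\O).
\]
It therefore remains to show that $\limsup_n T_H(R_n^T\O)\le T_H(\O)$ for the fixed degenerate $H$, as the rotations $R_n$ tend to the identity.

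\textbf{Step 3 (main obstacle): stability under small rotations for a degenerate $H$.} Since $R_n\to I$ and $\O$ is bounded, $R_n^T\O\subset\O^{\delta_n}=\{x:\operatorname{dist}(x,\O)<\delta_n\}$ with $\delta_n\to0$. Monotonicity of $T_G$ under inclusion, together with Proposition~\ref{propocomputedegenerate}, gives
\[
T_H(R_n^T\O)\le\int_{\R^{d-k}}T_G\big((\O^{\delta_n})_x\big)\,dx .
\]
The integrands are bounded by $C\,\operatorname{diam}(\O)^{k+2}$ and supported in a fixed bounded set, so by dominated convergence it suffices to prove $T_G((\O^{\delta})_x)\to T_G(\O_x)$ for a.e.\ $x$ as $\delta\to0$. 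This is where the $C^1$ hypothesis enters. For a.e.\ $x$, the fiber $\{x\}\times\R^k$ is transversal to $\partial\O$: the non-transversal points project onto a null set by a Sard-type argument for the $C^1$ boundary (in the Lipschitz graph charts, the projection of the tangency set has measure zero). For such $x$ the slices $(\O^\delta)_x$ decrease to $\overline{\O}_x$, and $\O_x$ is a $k$-dimensional open set with locally Lipschitz boundary. Hence $\bigcap_\delta H^1_0((\O^\delta)_x)=H^1_0(\O_x)$ in the sense of Mosco convergence, which yields convergence of the torsion for the norm $G$.

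\textbf{Conclusion.} Letting first $n\to\infty$ and then $\varepsilon\to0$ gives the upper semicontinuity along the chosen subsequence. Since every subsequence admits a further subsequence with this property, the bound holds for the full sequence, and combined with Theorem~\ref{semicont} this proves continuity. I expect the transversality and Mosco-stability argument for a.e.\ slice in Step~3 to require the most care, in particular checking that the exceptional set of fibers really has measure zero under the mere $C^1$ assumption.
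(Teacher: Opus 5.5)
Steps 1 and 2 are fine, and so is reducing Step 3 to the monotone family $\Omega^\delta$. The one-sided device ($R_n^T\Omega\subset\Omega^{\delta_n}$, monotonicity, then lower semicontinuity from Theorem~\ref{semicont}) is cleaner than the paper's direct appeal to Hausdorff complementary convergence of the rotated slices. The gap is in how you justify slice-wise convergence when $2\le k\le d-1$.

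The fibres $\{x\}\times\R^k$ that are not transversal to $\partial\Omega$ are exactly those over critical values of the projection $\pi\colon\partial\Omega\to\R^{d-k}$. This is a map from a $(d-1)$-manifold to $\R^{d-k}$, and Sard's theorem for it requires class $C^{k}$, not $C^1$. A concrete failure with $d=3$, $k=2$:
\begin{itemize}
\item Let $\partial\Omega$ contain a piece of the graph $\{x=f(y)\}$, $y\in\R^2$, with $\Omega$ locally equal to $\{x<f(y)\}$.
\item Take $f$ to be Whitney's $C^1$ function that is not constant on an arc of critical points.
\item Along that arc the outer normal is $e_1$, so the plane $\{x=t\}$ is tangent to $\partial\Omega$ for every $t$ in a nondegenerate interval.
\end{itemize}
So ``a.e.\ fibre is transversal'' is false. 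With it go your reasons why a.e.\ slice is Lipschitz and satisfies $(\overline\Omega)_x=\overline{\Omega_x}$, and dominated convergence cannot discard a set of $x$ of positive measure. For $k=1$ your argument is correct. The paper's proof passes over the same point (it asserts that all but countably many $x$ are good), so you cannot borrow the step from there.

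The fix is to prove $\lim_{\delta\to0}T_H(\Omega^\delta)=T_H(\Omega)$ globally, without slicing.
\begin{itemize}
\item Let $X(U)$ be the closure of $C^\infty_c(U)$ in the norm $\|u\|_{L^2}+\|\nabla_y u\|_{L^2}$, with $y$ the last $k$ variables. Since $H^2(\nabla u)=G^2(\nabla_y u)$, the supremum defining $T_H(U)$ can be taken over $X(U)$. Poincar\'e's inequality in $y$ makes $\frac12\int H^2(\nabla u)\,dx-\int u\,dx$ coercive on $X(U)$ for bounded $U$.
\item The spaces $X(\Omega^\delta)$ decrease. The minimizers $u_\delta$ are bounded and, up to subsequences, converge weakly to some $u\in\bigcap_\delta X(\Omega^\delta)$.
\item Weak lower semicontinuity then gives $\lim_\delta T_H(\Omega^\delta)\le T_H(\Omega)$, provided $\bigcap_\delta X(\Omega^\delta)\subset X(\Omega)$.
\item Elements of this intersection vanish a.e.\ outside $\overline\Omega$. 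For a $C^1$ (even Lipschitz) domain, every such function lies in $X(\Omega)$. Split it with a partition of unity subordinate to graph charts of $\partial\Omega$, and translate each boundary piece by a small $t$ in the inward graph direction of its chart. Translations preserve $\int H^2(\nabla u)\,dx$, are continuous in $L^2$, and push the support compactly into $\Omega$; then mollify.
\end{itemize}
This closes Step~3 for every $k$. By monotone convergence it also yields, a posteriori, the a.e.\ slice-wise convergence you were aiming for.
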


\begin{proof} If $k=0$ then it is obvious and if $k = d$, then it is true because the maps are locally Lipschitz in the set of norms, so assume $k \in \{1,\dots,d-1\}$. Let $H \in \HH_k$, without loss of generality we can assume $H(\xi,\eta) = G(\eta)$ for every $(\xi,\eta) \in \R^{d-k}\times\R^k$, where $G$ is a norm in $\R^k$. Take a sequence $\{H_n\}\subset\HH_k$ converging to $H$. There are rotations $A_n \in O(d)$ and norms $G_n$ in $\R^k$ satisfying $H_n(A_n(\xi,\eta)) = G_n(\eta)$ for every $(\xi,\eta) \in \R^{d-k}\times\R^k$. Clearly, up to a subsequence $G_n$ converges to $G$ and we can assume $A_n$ converges to $I$.

Consider now $x \in \R^{d-k}$ such that for every point in $(\partial\O)_x$, the tangent space is not parallel to $\{x_l = 0\}$ for $l \in \{d-k+1,\dots,d\}$. Since $\O$ is $C^1$. This implies that for $A \in O(d)$ close enough to $I$, $(A\O)_x$ is a finite union of sets like
\[
\{(y,z) \in U_A\times V_A \colon f_{A}(y) < z < g_{A}(y)\},
\]
where the sets $U_A \subset \R^{k-1}$ and $V_A \subset \R$ depend continuously (on the Hausdorff complementary topology) on $A$ and the functions $f_A, g_A \colon U_A \to \R$ have $C^1$ dependency on $A$. Therefore
\begin{equation}\label{convergenceofO}
\lim_{A\to I} (A\O)_x = \O_x,
\end{equation}
where the limit is thought in the Hausdorff complementary topology sense. Since $\O$ is $C^1$, except for a countable set, every $x \in \R^{d-k}$ has the property to ensure \eqref{convergenceofO}. Now notice that, for some compact $K \subset \R^{d-k}$ we have
\begin{align*}
|T_{H_n}(\O) - T_{H}(\O)| &= |T_{H_n\circ A_n}(A_n\O) - T_{H}(\O)| = \left|\int_{K}T_{G_n}((A_n\O)_x)-T_{G}(\O_x) \ dx\right| \\
&\leq \int_{K}|T_{G_n}((A_n\O)_x)-T_{G}((A_n\O)_x)|+|T_{G}((A_n\O)_x)-T_{G}(\O_x)| \ dx \\
&\leq C\lVert G_n - G\rVert^2|K| + \int_{K}|T_{G}((A_n\O)_x)-T_{G}(\O_x)| \ dx.
\end{align*}
The last line we used the locally Lipschitz property of torsion for norms, that is true for $n$ large enough, to complete the proof we use dominated convergence theorem and the continuity of the torsion with respect to the domain in the Hausdorff complementary topology.
\end{proof}

The same result is not true for the map $H \mapsto \lambda_{H}(\O)$ as the following example shows.

\begin{example}\label{edisc} Let $\O$ be the domain in Figure \ref{figure1} and let $H(\xi) = |\langle e,\xi\rangle|$ with $e = (1,0)$. It is easy to see that if $e_n = \left(\cos{\frac{1}{n}},-\sin{\frac{1}{n}}\right)$ and $H_n(\xi) =|\langle e_n, \xi\rangle|$, then
\[
\lim_{n \to \infty} H_{n} = H,
\]
while
\[
\lim_{n \to \infty} \lambda_{H_n}(\O) = \frac{\pi^2}{|C-A|^2} < \frac{\pi^2}{|B-A|} = \lambda_{H}(\O).
\]
\begin{figure}[h]
\centering
\includegraphics[width=0.70\linewidth]{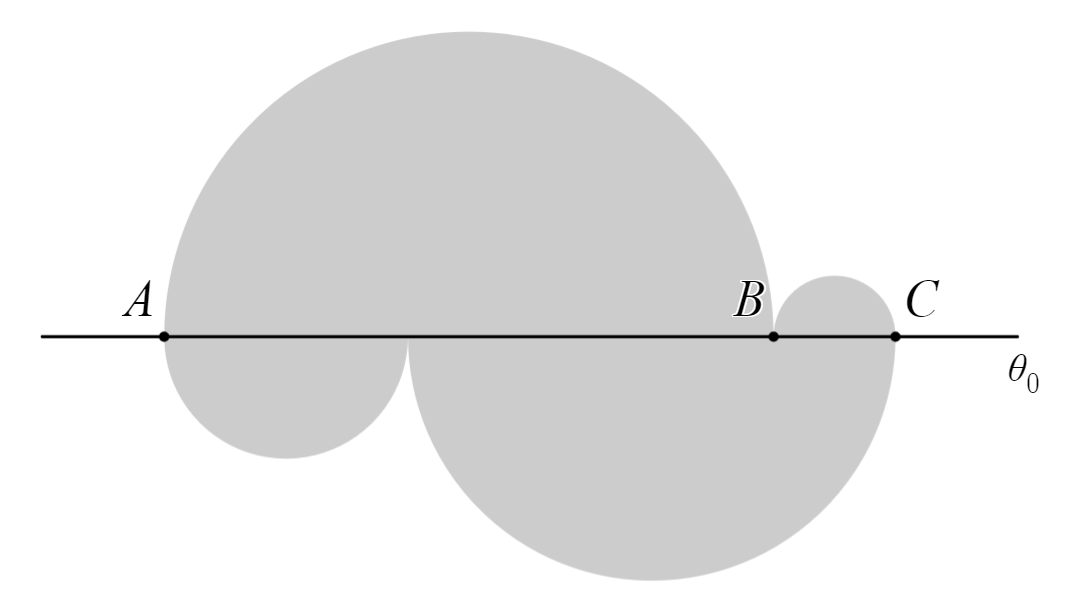}
\caption{Domain for which $H\mapsto\lambda_{H}(\O)$ is discontinuous}\label{figure1}
\end{figure}
\end{example}

However, if we assume convexity of the domain, we get a kind of continuity for both maps.

\begin{teor}\label{layercontinuous} Let $\O$ be a convex domain and $k \in \{0,1,\dots,d\}$. The maps
\[\begin{split}
&H\mapsto T_H(\O)\qquad\text{from $\HH_k\to\R$}\\
&H\mapsto \lambda_H(\O)\qquad\text{from $\HH_k\to\R$}\\
\end{split}\]
are continuous.
\end{teor}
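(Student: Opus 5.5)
The plan follows the structure of the proof of Theorem~\ref{layercontinuousC1}, with convexity used in place of the $C^1$ regularity. The cases $k=0$ and $k=d$ are already handled: $\HH_0=\{0\}$, and on $\HH_d$ both maps are locally Lipschitz by Theorem~\ref{semicont}. So we fix $k\in\{1,\dots,d-1\}$ and $H\in\HH_k$, and assume without loss of generality that $H(\xi,\eta)=G(\eta)$ with $G$ a norm on $\R^k$. Take $H_n\to H$ in $\HH_k$. As before, we write $H_n\circ A_n(\xi,\eta)=G_n(\eta)$ with $A_n\in O(d)$ and $G_n$ norms on $\R^k$. Passing to subsequences, we have $A_n\to I$ and $G_n\to G$; since $\|G_n\|$ is bounded below, the $G_n$ are uniformly equivalent to the Euclidean norm. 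Proposition~\ref{propochange} and Proposition~\ref{propocomputedegenerate} then give
\[
T_{H_n}(\O)=\int_{\R^{d-k}}T_{G_n}\big((A_n\O)_x\big)\,dx,\qquad \lambda_{H_n}(\O)=\inf_{x}\lambda_{G_n}\big((A_n\O)_x\big),
\]
with the analogous formulas for $H$ and $G$ on $\O$. Proving continuity along a subsequence of an arbitrary sequence is enough.

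\emph{Key geometric step.} For a convex body, the slices behave continuously under small rotations. Fix $x$ in the interior of the projection $P(\O)$ of $\O$ onto $\R^{d-k}$. Then $\O_x$ is a nonempty open convex set in $\R^k$, and $(A_n\O)_x\to\O_x$ in the Hausdorff metric. Convexity makes Hausdorff convergence of convex open sets with nonempty interior equivalent to Hausdorff complementary convergence. It is obtained from continuity of the support function of the slice, or simply because $A_n\O\to\O$ in Hausdorff distance and slicing of a convex body is continuous at interior points of the projection. The boundary of $P(\O)$ is convex, hence Lebesgue-null, so this convergence holds for a.e.\ $x$. All slices sit in a fixed ball, so the torsions are uniformly bounded. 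The torsion formula then follows exactly as in Theorem~\ref{layercontinuousC1}: we split $|T_{G_n}((A_n\O)_x)-T_G(\O_x)|$ into a term controlled by the local Lipschitz continuity of $G\mapsto T_G$ on norms of $\R^k$, and a term handled by continuity of $T_G$ under complementary Hausdorff convergence. Dominated convergence finishes the torsion case.

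\emph{Eigenvalue.} The eigenvalue formula involves an infimum over $x$, so pointwise a.e.\ convergence is not enough; this is the main obstacle, and Example~\ref{edisc} shows exactly what fails without convexity. We prove the two inequalities separately. Upper semicontinuity, $\limsup\lambda_{H_n}\le\lambda_H$, already holds by Theorem~\ref{semicont}. For the liminf, pick $x_n$ almost realizing $\lambda_{G_n}((A_n\O)_x)$ and extract $x_n\to\bar x\in\overline{P(\O)}$.
\begin{itemize}
\item If $\bar x$ is interior to $P(\O)$, the slices converge in the Hausdorff sense. Continuity of $\lambda_G$ on convex sets, together with $\lambda_{G_n}\ge\lambda_G-C\|G_n-G\|$ on uniformly bounded convex sets, gives $\liminf\lambda_{H_n}\ge\lambda_G(\O_{\bar x})\ge\lambda_H(\O)$.
\item If $\bar x\in\partial P(\O)$, use upper semicontinuity of slices of convex sets: every Hausdorff limit of $(A_n\O)_{x_n}$ is contained in $\overline{\O}_{\bar x}$. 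Convexity of the whole body gives $\O_{\bar x}\subset$ a translate of a limit of interior slices. More precisely, $x\mapsto\O_x$ is concave in the Minkowski sense, so $\O_{(1-t)\bar x+tx_0}\supset (1-t)\O_{\bar x}+t\O_{x_0}$. Monotonicity and the scaling $\lambda_G(s U)=s^{-2}\lambda_G(U)$ then show that the limiting slice has eigenvalue at least $\inf_x\lambda_G(\O_x)$.
\end{itemize}
Thus $\liminf\lambda_{H_n}(\O)\ge\lambda_H(\O)$ in both cases. The delicate point I expect is precisely this boundary case, where degenerate (lower-dimensional) limit slices must be shown to carry eigenvalue $+\infty$ or to be dominated by interior ones. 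I would handle it using the Brunn--Minkowski-type concavity above.
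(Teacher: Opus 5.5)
Your proposal is correct and follows essentially the same route as the paper. It reduces via Propositions~\ref{propochange} and~\ref{propocomputedegenerate} to $k$-dimensional slices. For $T_H$ it combines Hausdorff convergence of slices of the convex body with the local Lipschitz dependence on $G$ and dominated convergence. For $\lambda_H$ it pairs the upper semicontinuity of Theorem~\ref{semicont} with compactness of (near-)minimizing slice positions.

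Your Minkowski-concavity treatment of a limit point $\bar x\in\partial P(\O)$ covers a case the paper's proof passes over when it simply asserts that $(A_n^T\O)_{x_n}$ converges to the slice at $x_0$. There you must use slices of $\overline{\O}$, since the open slice is empty. Also, the slices should be those of $A_n^T\O$, not $A_n\O$, though this is immaterial since $A_n\to I$.
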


\begin{proof} If $k=0$ then it is obvious and if $k = d$, then it is true because the maps are locally Lipschitz in the set of norms, so assume $k \in \{1,\dots,d-1\}$. Let $H \in \HH_k$, we now prove that both maps are continuous on $H$. Take a sequence $\{H_n\} \subset \HH_k$ converging to $H$. Therefore there are $A_n \in O(d)$ such that $H_{n}(A_n(\xi,\eta)) = G_{n}(\eta)$ for every $(\xi,\eta) \in \R^{d-k}\times\R^k$, where $G_n$ are norms in $\R^k$. By going through subsequences we can assume $A_n$ converges to $A \in O(d)$ and $G_n$ converges to a norm $G$ in $\R^k$ satisfying $H(A(\xi,\eta)) = G(\eta)$ for every $(\xi,\eta) \in \R^{d-k}\times\R^k$.

We first prove the torsion map is continuous. Since $\O$ is convex, then $(A_n^T\O)_x$ converges to $(A^T\O)_x$ in the Hausdorff sense for every $x \in \R^{d-k}$. Therefore, for $n$ large enough we have
\begin{align*}
|T_{G_n}((A_n^T\O)_x)-T_{G}((A^T\O)_x)| &\leq |T_{G_n}((A_n^T\O))_x-T_{G}((A_n^T\O)_x)| + |T_{G}((A_n^T\O)_x)-T_{G}((A^T\O)_x)| \\
& \leq C_{G}T_{G}((A_n^T\O)_x)\lVert G_n - G\rVert + |T_{G}((A_n^T\O)_x)-T_{G}((A^T\O)_x)|,
\end{align*}
which letting $n \to \infty$ implies that $T_{G_n}((A_n^T\O)_x)$ converges to $T_{G}((A^T\O)_x)$ for every $x \in \R^{d-k}$, hence, by dominated convergence theorem
\begin{align*}
\lim_{n \to \infty} T_{H_n}(\O) &= \lim_{n \to \infty} T_{H_n\circ A_n}(A_n^T\O) = \lim_{n \to \infty} \int_{\R^{d-k}}T_{G_n}((A_n^T\O)_x) \ dx \\ 
&= \int_{\R^{d-k}}T_{G}((A^T\O)_x) \ dx = T_{H}(\O).
\end{align*}

Now, we prove the eigenvalue map is continuous. Since there is a compact $K \subset \R^{d-k}$ satisfying
\[
\lambda_{H_n}(\O) = \lambda_{H_{n}\circ A_n}(A_n^T\O) = \inf_{x \in \R^{d-k}}\lambda_{G_n}((A_n^T\O)_x) = \inf_{x \in K}\lambda_{G_n}((A_n^T\O)_x),
\]
there is a sequence $\{x_{k,n}\} \subset K$ such that
\[
\lim_{k \to \infty}\lambda_{G_n}((A_n^T\O)_{x_{k,n}}) = \lambda_{H_n}(\O).
\]
By going through a subsequence we can assume $x_{k,n}$ converges to some $x_n \in K$ and since $\O$ is convex and $\lambda_{G_n}(\cdot)$ is continuous on the Hausdorff complementary topology we have $\lambda_{H_n}(\O) = \lambda_{G_n}((A_n^T\O)_{x_{n}})$. Again, by going through a subsequence we have that $x_{n}$ converges to $x_0$ and since $\O$ is convex, then $(A_n^T\O)_{x_n}$ converges to $(A^T\O)_x$ in the Hausdorff sense. Therefore, for $n$ large enough we have
\begin{align*}
|\lambda_{G_n}((A_n^T\O)_{x_n})-\lambda_{G}((A^T\O)_{x_0})| &\leq |\lambda_{G_n}((A_n^T\O))_{x_n}-\lambda_{G}((A_n^T\O)_{x_n})| + |\lambda_{G}((A_n^T\O)_{x_n})-\lambda_{G}((A^T\O)_{x_0})| \\
& \leq C_{G}\lambda_{G}((A_n^T\O)_{x_n})\lVert G_n - G\rVert + |\lambda_{G}((A_n^T\O)_{x_n})-\lambda_{G}((A^T\O)_{x_0})|,
\end{align*}
which letting $n \to \infty$ implies that $\lambda_{G_n}((A_n^T\O)_{x_n})$ converges to $\lambda_{G}((A^T\O)_{x_0})$. Therefore
\begin{align*}
\lim_{n \to \infty} \lambda_{H_n}(\O) = \lim_{n\to \infty} \lambda_{G_n}((A_n^T\O)_{x_n}) = \lambda_{G}((A^T\O)_{x_0}) \geq \inf_{x \in \R^{d-k}}   \lambda_{G}((A^T\O)_{x}) = \lambda_{H\circ A}(A^T\O) = \lambda_{H}(\O).
\end{align*}
This together with the uppper semicontinuity of $H \mapsto \lambda_{H}(\O)$ concludes the proof.
\end{proof}

In particular, the two-dimensional case provides a stronger continuity, the proof for the map $H\mapsto\lambda_{H}(\O)$ can be seen in \cite{HHM} and for $H\mapsto T_{H}(\O)$ is analogous.

\begin{cor}\label{cor2dim} Let $\O \subset \R^2$ be a convex domain. The maps
\[\begin{split}
&H\mapsto T_H(\O)\qquad\text{from $\HH\to\R$}\\
&H\mapsto \lambda_H(\O)\qquad\text{from $\HH\to\R$}\\
\end{split}\]
are continuous. Also, if $\O \subset \R^2$ is a $C^1$ domain, then the map
$$H\mapsto T_H(\O)\qquad\text{from $\HH\to\R$}$$
is continuous.
\end{cor}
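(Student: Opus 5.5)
In $\R^2$ we have $\HH=\HH_0\cup\HH_1\cup\HH_2$. Theorem~\ref{layercontinuous} (for convex $\O$) and Theorem~\ref{layercontinuousC1} (for the torsion on $C^1$ domains) already give continuity of the maps restricted to each layer $\HH_k$. Continuity at $H=0$ is contained in Theorem~\ref{semicont}. Since $\HH_2$ is open, the only remaining issue is continuity at a point $H\in\HH_1$ along sequences $H_n$ that are norms, i.e.\ $H_n\in\HH_2$. After passing to a subsequence we may also assume every $H_n$ is a norm or every $H_n\in\HH_1$. The second case is handled by the layer theorems, so we assume $H_n\in\HH_2$ and $H_n\to H$. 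Up to a rotation, and using Proposition~\ref{propochange}, we may take $H(\xi)=c|\xi_2|$, and after normalizing, $c=1$.

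\textbf{Semicontinuity.} Theorem~\ref{semicont} gives $\liminf T_{H_n}(\O)\ge T_H(\O)$ and $\limsup\lambda_{H_n}(\O)\le\lambda_H(\O)$. It therefore remains to prove the reverse inequalities. Here I would use comparison from below by degenerate seminorms. Write $H_n(\xi)\ge G_n(\xi)$, where $G_n\in\HH_1$ is a seminorm with $G_n\to H$. Monotonicity in $H$ then gives $T_{H_n}(\O)\le T_{G_n}(\O)$ and $\lambda_{H_n}(\O)\ge\lambda_{G_n}(\O)$. Theorem~\ref{layercontinuous} (or Theorem~\ref{layercontinuousC1}) yields $T_{G_n}(\O)\to T_H(\O)$ and $\lambda_{G_n}(\O)\to\lambda_H(\O)$, which closes the argument.

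\textbf{Constructing $G_n$.} This is the heart of the proof and the expected obstacle. We need $G_n\le H_n$, $G_n\in\HH_1$ and $G_n\to H$; this construction is genuinely two-dimensional. Take the unit ball $K_n=\{H_n\le1\}$, a symmetric convex body, which converges in the Hausdorff sense on compacts to the strip $\{|\xi_2|\le1\}$. Let $\eta_n$ be a point of $\partial K_n$ maximizing $|\xi_1|$ near the direction where the strip is unbounded; more precisely, choose the support line of $K_n$ in a suitable direction. In two dimensions the symmetric body $K_n$ is contained in the symmetric strip between two parallel support lines $\{|\langle\xi,\nu_n\rangle|\le h_n\}$, where $\nu_n$ is chosen as the outer normal at the point of $\partial K_n$ on the $\xi_2$-axis. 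Setting $G_n(\xi)=|\langle\xi,\nu_n\rangle|/h_n$ gives $G_n\le H_n$.

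\textbf{Checking $G_n\to H$.} We must show $\nu_n/h_n\to e_2$. The point $t_ne_2\in\partial K_n$ satisfies $t_n=1/H_n(e_2)\to1$, so $h_n=\langle t_ne_2,\nu_n\rangle$. Because $K_n$ contains long segments along nearly horizontal directions, namely the points $s\,e_1/H_n(e_1)$ with $H_n(e_1)\to0$, the normal $\nu_n$ must be nearly vertical: otherwise the strip would cut off those far points. Hence $\nu_n\to e_2$ and $h_n\to1$, so $G_n\to H$ uniformly on the unit sphere. The delicate point is making this uniform-convergence estimate rigorous: one has to bound the tilt of $\nu_n$ by roughly $H_n(e_1)/t_n$, using convexity and the fact that $\pm e_1/H_n(e_1)\in K_n$. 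For the $C^1$ torsion statement the same comparison works, with Theorem~\ref{layercontinuousC1} in place of Theorem~\ref{layercontinuous}.
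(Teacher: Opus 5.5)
Your proof is correct. For this corollary the paper gives no argument of its own: it refers to the literature for $\lambda_H$ and says the torsion case is analogous. Your route is the same sandwich argument the paper uses for Theorem~\ref{contquadratic}:
\begin{itemize}
\item Theorem~\ref{semicont} gives one inequality;
\item a dominated seminorm $G_n\le H_n$ in the same layer as the limit $H$, combined with Theorem~\ref{layercontinuous} (or Theorem~\ref{layercontinuousC1}), gives the other.
\end{itemize}
In $\R^2$ the only case needing this is $H\in\HH_1$, $H_n\in\HH_2$.

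Your support-line construction can be stated in one line. Set $G_n(\xi)=|\langle\xi,w_n\rangle|$ with $w_n=\nu_n/h_n=(a_n,H_n(e_2))$. Testing the strip against $\pm e_1/H_n(e_1)\in K_n$ gives $|a_n|\le H_n(e_1)$. So the tilt bound is $|\nu_{n,1}|/\nu_{n,2}\le H_n(e_1)/H_n(e_2)$, not $H_n(e_1)/t_n$; this is harmless since $t_n\to1$. It follows that $|G_n(\xi)-H(\xi)|\le (H_n(e_1)+|H_n(e_2)-1|)\,|\xi|\to0$.

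This step is not actually two-dimensional. The same choice of support hyperplane at $t_n\eta$ works for any $H\in\HH_1$ in any $\R^d$. The dimension enters only because $\R^2$ has no intermediate layers $\HH_k$ with $2\le k\le d-1$. In those layers a dominated seminorm of the same rank converging to $H$ is not readily available, and this is why the corollary is stated only in the plane.
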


When we restrict the analysis to seminorms induced by quadratic forms, Theorem~\ref{layercontinuous} can be improved, without the need of fixing codimension of $\ker{H}$.

\begin{teor}\label{contquadratic} Let $\O$ be a convex domain. The maps
\[\begin{split}
&H\mapsto T_H(\O)\qquad\text{from $\mathcal{Q}\to\R$}\\
&H\mapsto \lambda_H(\O)\qquad\text{from $\mathcal{Q}\to\R$}\\
\end{split}\]
are continuous. Also, if $\O$ is a $C^1$ domain, then the map
$$H\mapsto T_H(\O)\qquad\text{from $\mathcal{Q}\to\R$}$$
is continuous.
\end{teor}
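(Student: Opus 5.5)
The plan is to reduce to the fixed-codimension results (Theorem~\ref{layercontinuous}, Theorem~\ref{layercontinuousC1}), using the extra structure of quadratic seminorms when the rank drops in the limit. Write $H_n^2(\xi)=\langle M_n\xi,\xi\rangle$ and $H^2(\xi)=\langle M\xi,\xi\rangle$ with $M_n,M$ symmetric positive semidefinite and $M_n\to M$; this matrix convergence follows from $H_n\to H$, since on the unit sphere $H_n^2\to H^2$ uniformly and polarization recovers the matrices. Diagonalize $M_n=R_n^TD_nR_n$ with $R_n\in O(d)$, and pass to subsequences so that $R_n\to R$ and $D_n\to D$, with $M=R^TDR$. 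If $\operatorname{rank}M_n=\operatorname{rank}M$ eventually, we are inside a single $\HH_k$ and the earlier theorems apply. The real case is therefore when $\operatorname{rank}M=k<\operatorname{rank}M_n$: some eigenvalues $\varepsilon_n^{(j)}\to0$ while the others converge to positive limits.

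The key device is to compare $H_n$ with seminorms in the same layer as $H$. Let $\tilde D_n$ be $D_n$ with the vanishing eigenvalues set to $0$, and put $\tilde H_n^2(\xi)=\langle R_n^T\tilde D_nR_n\xi,\xi\rangle$. Then $\tilde H_n\in\HH_k\cap\mathcal Q$, $\tilde H_n\to H$, and $\tilde H_n\le H_n$. By monotonicity this gives
\[
\lambda_{H_n}(\O)\ge\lambda_{\tilde H_n}(\O)\qquad\text{and}\qquad T_{H_n}(\O)\le T_{\tilde H_n}(\O).
\]
The right-hand sides converge to $\lambda_H(\O)$ and $T_H(\O)$ by Theorem~\ref{layercontinuous}, or by Theorem~\ref{layercontinuousC1} for the torsion in the $C^1$ case. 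Hence $\liminf\lambda_{H_n}\ge\lambda_H$ and $\limsup T_{H_n}\le T_H$. Theorem~\ref{semicont} supplies the opposite inequalities, $\limsup\lambda_{H_n}\le\lambda_H$ and $\liminf T_{H_n}\ge T_H$. Since every subsequence has a further subsequence along which the limit equals $\lambda_H(\O)$ (respectively $T_H(\O)$), the whole sequence converges.

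The main obstacle is checking that Theorem~\ref{semicont} really gives the stated directions. Lower semicontinuity of $T$ and upper semicontinuity of $\lambda$ are exactly the missing inequalities, so this is fine. Indeed, the quadratic structure is what lets us dominate $H_n$ from below by a same-layer seminorm $\tilde H_n\le H_n$ converging to $H$; for general seminorms no such $\tilde H_n$ need exist. If the semicontinuity were insufficient, the fallback would be a direct argument: use Proposition~\ref{propochange} with the matrix $A_n=D_n^{1/2}R_n$ to map $H_n$ to a Euclidean-type seminorm on a stretched domain, then track the slices as $\varepsilon_n\to0$ via Proposition~\ref{propocomputedegenerate}. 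I expect the comparison argument above to suffice.
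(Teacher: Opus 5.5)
Your proposal is correct and is essentially the paper's proof. The paper likewise diagonalizes $H_n$ by rotations $A_n$ and drops the coordinates whose coefficients $\alpha_{i,n}$ tend to $0$, obtaining $G_n\in\HH_k\cap\mathcal Q$ with $G_n\le H_n$ and $G_n\to H$; it then sandwiches $\lambda_{H_n}(\O)$ and $T_{H_n}(\O)$ between the semicontinuity bounds of Theorem~\ref{semicont} and the limits of $\lambda_{G_n}(\O)$, $T_{G_n}(\O)$ given by Theorem~\ref{layercontinuous} (or Theorem~\ref{layercontinuousC1} for the torsion on $C^1$ domains), with your explicit subsequence-of-subsequence step only making its ``up to a subsequence'' argument slightly more careful.
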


\begin{proof} Let $H\in\mathcal{Q}\cap\HH_k$, we now prove that the maps are continuous at $H$. Let $\{H_n\} \subset \mathcal{Q}$ be a sequence converging to $H$, then there are $A_{n} \in O(d)$ such that
\[
H_{n}(A_n\xi) = \sqrt{\sum_{i=1}^{d}\alpha_{i,n}^2\xi_i^2}.
\]
Since $O(d)$ is compact, up to a subsequence $A_n$ converges to some $A \in O(d)$, we can also assume that the coefficients $\alpha_{i,n}$ converge to $\alpha_{i}$. We can choose $A_n$ to be such that $\alpha_{i} = 0$ for $i \in \{1,\dots,d-k\}$. Therefore, if we define the sequence $G_{n}$ as
\[
G_{n}(A_n\xi) = \sqrt{\sum_{i=d-k+1}^{d}\alpha_{i,n}^2\xi_i^2},
\]
we have that the sequence $\{G_n\} \subset \HH_k$ converges to $H$. It is also clear that $H_n \geq G_n$. So, if $\O$ is convex, by the semicontinuities of the maps and by Theorem~\ref{layercontinuous} we have
\[
\lambda_{H}(\O) \geq \limsup_{n \to \infty}\lambda_{H_n}(\O) \geq \liminf_{n \to \infty} \lambda_{H_n}(\O) \geq \limsup_{n \to \infty} \lambda_{G_n}(\O) = \lambda_{H}(\O). 
\]
and
\[
T_{H}(\O) \leq \liminf_{n \to \infty} T_{H_n}(\O) \leq \limsup_{n \to \infty} T_{H_n}(\O) \leq \liminf_{n \to \infty} T_{G_n}(\O) = T_{H}(\O),
\]
proving the continuity of both maps. If $\O$ is $C^1$, then the continuity of $H \mapsto T_{H}(\O)$ follows analagously using Theorem~\ref{layercontinuousC1} instead of Theorem~\ref{layercontinuous}.
\end{proof}

As a consequence of Theorems~\ref{layercontinuousC1} and~\ref{layercontinuous}, we can enhance the results in Theorem~\ref{Tmax}.

\begin{cor} If $\O$ is $C^1$ or convex, then there is $H \in \s(\HH_1)$ such that
\[
T_{\max}(\O) = T_{H}(\O).
\]
\end{cor}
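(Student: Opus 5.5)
By Theorem~\ref{Tmax} we have
\[
T_{\max}(\O)=\sup\big\{T_H(\O)\ :\ H\in\s(\HH_{1})\big\}.
\]
The plan is therefore to show that this supremum over $\s(\HH_1)$ is attained. We do this with a compactness argument combined with the continuity of $H\mapsto T_H(\O)$ on the stratum $\HH_1$, which Theorem~\ref{layercontinuousC1} gives for $C^1$ domains and Theorem~\ref{layercontinuous} gives for convex domains.

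First we identify $\s(\HH_1)$ concretely. By the characterization of $\HH_1$ recalled in Section~\ref{s_preli}, every $H\in\HH_1$ has the form $H(\xi)=|\langle\xi,\eta\rangle|$ with $\eta\neq0$, and $\|H\|=|\eta|$ by Cauchy--Schwarz. Consequently the map $\Phi\colon\s^{d-1}\to\s(\HH_1)$, $\Phi(\eta)=H_\eta$ with $H_\eta(\xi)=|\langle\xi,\eta\rangle|$, is surjective. It is also continuous for the norm \eqref{eqnorm}, because
\[
\big|H_\eta(\xi)-H_{\eta'}(\xi)\big|\le|\langle\xi,\eta-\eta'\rangle|\le|\xi|\,|\eta-\eta'|,
\]
so that $\|H_\eta-H_{\eta'}\|\le|\eta-\eta'|$. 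It follows that $\s(\HH_1)$ is compact, being the continuous image of the compact sphere $\s^{d-1}$. Alternatively, $\s(\HH_1)=\s(\HH_{\le1})\setminus\{0\}=\s(\HH_{\le 1})$ is closed and bounded, and the Heine--Borel property of $\HH$ gives compactness directly.

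Next, since $\O$ is $C^1$ or convex, the restriction of $H\mapsto T_H(\O)$ to $\HH_1$ is continuous. Composing with $\Phi$, the function $\eta\mapsto T_{H_\eta}(\O)$ is continuous on $\s^{d-1}$ and real valued; it is finite because $\O$ is bounded, for instance by the slicing formula of Proposition~\ref{propocomputedegenerate}. By Weierstrass' theorem it attains its maximum at some $\eta_0\in\s^{d-1}$. Setting $H=H_{\eta_0}\in\s(\HH_1)$ and invoking Theorem~\ref{Tmax}, we obtain $T_{\max}(\O)=T_H(\O)$.

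The argument has essentially no obstacle, since the delicate part, continuity within the stratum $\HH_1$, is already supplied by Theorems~\ref{layercontinuousC1} and~\ref{layercontinuous}. The one point that needs care is the compactness of $\s(\HH_1)$. In general a stratum $\HH_k$ is not closed, because a sequence in $\HH_k$ can degenerate into a lower stratum. Here this cannot happen: on the sphere, the only lower stratum is $\HH_0=\{0\}$, which has norm zero and so does not meet $\s(\HH)$. The explicit parametrization by $\s^{d-1}$ settles this point cleanly.
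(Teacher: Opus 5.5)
Your proposal is correct and follows the paper's own (implicit) argument: Theorem~\ref{Tmax} reduces $T_{\max}(\O)$ to a supremum over $\s(\HH_1)$, and Theorems~\ref{layercontinuousC1} and~\ref{layercontinuous} give continuity of $H\mapsto T_H(\O)$ on $\HH_1$, so a maximizer exists by compactness. Your explicit parametrization of $\s(\HH_1)$ by $\s^{d-1}$ supplies the compactness step that the paper leaves unstated.
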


\section{On the existence of optimal seminorms}

\begin{defi} Let $\O\subset\R^d$ be a bounded domain and let $q\in\R$. For every seminorm $H$ we define
\[
F_{q,\O}(H) = \lambda_{H}(\O)T_{H}^{q}(\O).
\]
The isoanisotropic constants associated with the functional $F_{q,\O}$ are defined by:
\[\begin{split}
&M_q(\O) = \sup\big\{F_{q,\O}(H)\ :\ \lVert H \rVert = 1\big\},\\
&m_q(\O) = \inf\big\{F_{q,\O}(H)\ :\ \lVert H \rVert = 1\big\}.
\end{split}\]
\end{defi}

Our first results below show that for large $q$ the functional $F_{q,\O}$ has a minimizer which is actually a norm, and the same happens for the maximization problem when $q$ is small.

\begin{teor}\label{min q large}
Let $\O\subset\R^d$ be a bounded domain. Then there is $\bar{q}>0$ such that for every $q \geq \bar{q}$, we have
\[
m_q(\O) = \inf_{H \in \s(\HH_{d})}F_{q,\O}(H).
\]
In addition, there is $\tilde{q}$ such that for every $q \geq \tilde{q}$, there is $H_{q} \in \s(\HH_{d})$ such that
\[
m_{q}(\O) = F_{q,\O}(H_q).
\]
\end{teor}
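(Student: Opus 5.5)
The plan is to compare every degenerate seminorm with the Euclidean norm, writing $|\cdot|$ for the Euclidean norm, $T=T(\O)$ and $\lambda=\lambda(\O)$. Since $F_{q,\O}(|\cdot|)=\lambda T^q$, it suffices to prove two facts. First, $\lambda_H(\O)$ is bounded below on $\s(\HH)$ by a positive constant. Second, $T_H(\O)$ is bounded below on $\s(\HH_{\le d-1})$ by $cT$ for some $c>1$. With these, a large exponent $q$ makes every degenerate seminorm worse than the Euclidean norm.

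The lower bound for the eigenvalue is immediate. For every $H\in\s(\HH)$ we have $\lambda_H(\O)\ge \lambda_{\min}(\O)=\inf_\omega \pi^2/L_\omega^2(\O)\ge \pi^2/\operatorname{diam}(\O)^2>0$, using the recalled theorem on $\lambda_{\min}$.

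For the torsion gap, take $H\in\s(\HH_{\le d-1})$. Its kernel contains a unit vector $e$. Since $H$ is a seminorm, $H(\xi)=H(\xi-\langle\xi,e\rangle e)$, and because $\|H\|=1$ this gives $H(\xi)\le|P_e\xi|$, where $P_e$ is the orthogonal projection onto $e^\perp$. By monotonicity, $T_H(\O)\ge T_{|P_e\cdot|}(\O)$. Let $u$ be the Euclidean torsion function of $\O$. I expect the one point needing care to be the claim that $\int_\O\langle\nabla u,e\rangle^2\,dx>0$ for every $e$. This holds because a function of $H^1_0(\O)$ with $\partial_e u\equiv0$ vanishes on a bounded domain, and $u\not\equiv 0$. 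The map $e\mapsto\int_\O\langle\nabla u,e\rangle^2\,dx$ is continuous on the compact set $\s^{d-1}$, so its minimum $\delta$ is positive. Using $u$ as a test function in \eqref{torsion} gives
\[
T_{|P_e\cdot|}(\O)\ge\frac{(\int_\O u)^2}{\int_\O|\nabla u|^2-\delta}=\frac{\int_\O|\nabla u|^2}{\int_\O|\nabla u|^2-\delta}\,T=:c\,T,
\]
with $c>1$ independent of $e$. Here I used $T=(\int u)^2/\int|\nabla u|^2$. This argument needs no Wiener condition.

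With both bounds, every $H\in\s(\HH_{\le d-1})$ satisfies $F_{q,\O}(H)\ge\lambda_{\min}(\O)\,c^qT^q$. Set $\bar q=\log(\lambda/\lambda_{\min}(\O))/\log c$. For $q\ge\bar q$ this is at least $\lambda T^q=F_{q,\O}(|\cdot|)\ge\inf_{\s(\HH_d)}F_{q,\O}$. Since $\s(\HH)=\s(\HH_d)\cup\s(\HH_{\le d-1})$, the first claim follows.

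For attainment, I take $\tilde q>\bar q$, so the inequality above is strict: $\lambda_{\min}c^qT^q>\lambda T^q\ge m_q(\O)$. Let $H_n\in\s(\HH_d)$ be a minimizing sequence. By the Heine–Borel property of $\HH$, up to a subsequence $H_n\to H\in\s(\HH)$. If $H$ were degenerate, then lower semicontinuity of $T$ (Theorem~\ref{semicont}) would give $\liminf T_{H_n}\ge T_H\ge cT$. Combined with $\lambda_{H_n}\ge\lambda_{\min}$, this yields $m_q=\lim F_{q,\O}(H_n)\ge\lambda_{\min}c^qT^q>m_q$, a contradiction. Hence $H\in\s(\HH_d)$. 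Both maps are locally Lipschitz on $\HH_d$, so $F_{q,\O}(H)=\lim F_{q,\O}(H_n)=m_q$, and we take $H_q=H$. The product of an upper semicontinuous and a lower semicontinuous function need not be semicontinuous, so this contradiction step is exactly what rules out escape of the minimizing sequence to the degenerate layer.
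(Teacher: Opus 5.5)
Your proof is correct and follows the same scheme as the paper. Both arguments use a uniform torsion gap $c>1$ on $\s(\HH_{\le d-1})$, the bound $\lambda_H(\O)\ge\lambda_{\min}(\O)$, a comparison with the Euclidean norm for large $q$, and then compactness of $\s(\HH)$ together with continuity on $\HH_d$ to get a minimizer.

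The main difference is how you obtain $c>1$. You use an explicit, uniform test-function estimate with the Euclidean torsion function, whereas the paper argues by contradiction using compactness and lower semicontinuity. Your version has two advantages. It proves the strict inequality $T_H(\O)>T(\O)$ for degenerate $H$, which the paper asserts without justification. Your contradiction step ruling out degenerate limits of minimizing sequences also supplies the closedness of the sublevel set, which the paper takes for granted.
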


\begin{proof} We first prove that
\[
c =\inf_{H \in \s(\HH_{\leq d-1})} \frac{T_{H}(\O)}{T(\O)} > 1.
\]
The proof follows by contradiction, it is clear that $c \geq 1$, so assume $c =1$. Therefore, there is a sequence $\{H_n\}\subset \s(\HH_{\leq d-1})$ such that
\[
\lim_{n\to\infty}T_{H_n}(\O) = T(\O).
\]
Since $\s(\HH_{\leq d-1})$ is compact, the sequence converges to some $H \in \s(\HH_{\leq d-1})$ up to a subsequence. Due to lower semicontinuity of the torsion, we have
\[
T(\O) = \liminf_{n\to\infty}T_{H_n}(\O) \geq T_{H}(\O) > T(\O).
\]
arriving at the contradiction. Therefore if $q > \tilde{q} = \left(\log{(\lambda(\O))}-\log{(\lambda_{\min}(\O))}\right)\left(\log{c}\right)^{-1}$ we have
\[
\frac{\lambda_{H}(\O)T_{H}^q(\O)}{\lambda(\O)T^q(\O)} \geq \left(\frac{\lambda_{\min}(\O)}{\lambda(\O)}\right)c^q > 1,
\]
proving the infimum is can be taken only in $\s(\HH_{d})$. Notice that if $\mathcal{E}$ is the euclidean norm, then we proved that
\[
m_{q}(\O) \leq F_{q,\O}(\mathcal{E})
\]
for every $q \leq \tilde{q}$, hence the infimum can be taken in $F_{q,\O}^{-1}((-\infty,F_{q,\O}(\mathcal{E})])\cap \s(\HH)$. Due to Theorem~\ref{semicont}, the map $H \mapsto F_{q,\O}(H)$ being continuous in $\HH_{d}$, also, as a consequence of the proof of the first statement, we have $F_{q,\O}^{-1}((-\infty,F_{q,\mathcal{E}}(\O)]) \subset \HH_d$. This implies that $F_{q,\O}^{-1}((-\infty,F_{q,\O}(\mathcal{E})])\cap \s(\HH)$ is compact, therefore continuity assures the existence of a minimizer.
\end{proof}

In the same way we proved Theorem~\ref{min q large}, we can prove the following result.

\begin{teor} \label{max q small}
Let $\O\subset\R^d$ be a bounded domain. Then there is $\bar{q} > 0$ such that for every $q \leq\bar{q}$, we have
\[
M_q(\O) = \sup_{H \in \s(\HH_{d})}F_{q,\O}(H).
\]
In addition, there is $\tilde{q} > 0$ such that for every $q \leq \tilde{q}$, there is $H_{q} \in \s(\HH_{d})$ such that
\[
M_{q}(\O) = F_{q,\O}(H_q).
\]
\end{teor}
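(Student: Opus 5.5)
The plan mirrors the proof of Theorem~\ref{min q large}, with the roles of $\lambda$ and $T$ exchanged. The key quantity will be
\[
c=\sup_{H\in\s(\HH_{\le d-1})}\frac{\lambda_H(\O)}{\lambda(\O)} .
\]
I first show that $c<1$. Clearly $c\le1$, since $\lambda_H\le\lambda$ on the unit sphere. Suppose $c=1$ and take $H_n\in\s(\HH_{\le d-1})$ with $\lambda_{H_n}(\O)\to\lambda(\O)$. The set $\s(\HH_{\le d-1})$ is compact, so a subsequence converges to some $H\in\s(\HH_{\le d-1})$. By the upper semicontinuity in Theorem~\ref{semicont},
\[
\lambda(\O)=\limsup_n\lambda_{H_n}(\O)\le\lambda_H(\O).
\]
For a degenerate $H$ this inequality is impossible, because $\lambda_H(\O)<\lambda(\O)$. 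To see the strict inequality, pick $\eta\in\ker H$ with $\lambda_H(\O)=\lambda_{H'}(\O)$, where $H'(\xi)=H(\xi)$ and $H\le|P_{\eta^\perp}\xi|$. This reduces to the known fact that seminorms with a nontrivial kernel give a strictly smaller eigenvalue than the Euclidean norm, via Proposition~\ref{propocomputedegenerate} and the comparison $\lambda_G(\O_x)\le\lambda_{|\cdot|}(\O_x)<\lambda(\O)$. Alternatively, I will cite the $\lambda_{\max}$ theorem, which says the Euclidean norm is the only extremizer under a regularity condition. For a general bounded domain I will instead argue directly that $\sup_{\s(\HH_{\le d-1})}\lambda_H<\lambda(\O)$, using the monotone bound $H\le|\pi\xi|$, where $\pi$ is the projection onto $(\ker H)^\perp$, so that $\lambda_H\le\lambda_{|\pi\cdot|}$. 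The slice eigenvalues in dimension $<d$ then dominate less strongly. This is the step I expect to be the main obstacle: showing strict inequality without any boundary regularity.

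Next I compare against the Euclidean norm $\mathcal E$. For every $H\in\s(\HH_{\le d-1})$ we have $T_H(\O)\le T_{\max}(\O)<\infty$, the finiteness following from boundedness of $\O$ via slicing. Hence
\[
\frac{F_{q,\O}(H)}{F_{q,\O}(\mathcal E)}\le c\left(\frac{T_{\max}(\O)}{T(\O)}\right)^q ,
\]
and the right-hand side is $<1$ as soon as
\[
q<\bar q=\frac{-\log c}{\log\big(T_{\max}(\O)/T(\O)\big)}>0 .
\]
For such $q$, degenerate seminorms cannot approach the supremum, so $M_q(\O)=\sup_{\s(\HH_d)}F_{q,\O}$. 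For $q$ strictly below $\bar q$ the inequality is strict with a uniform constant. This gives the first statement.

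For existence with $q\le\tilde q<\bar q$, I consider the superlevel set
\[
S=\{H\in\s(\HH):F_{q,\O}(H)\ge F_{q,\O}(\mathcal E)\}.
\]
By the previous step $S\subset\HH_d$, and in fact $S$ stays away from $\s(\HH_{\le d-1})$. To see this, note that near a degenerate $H_0$ upper semicontinuity gives $\lambda_{H}$ close to or below $\lambda_{H_0}\le c\lambda$, while $T_H\le T_{\max}$. Hence $F_{q,\O}<F_{q,\O}(\mathcal E)$ on a neighbourhood of $\s(\HH_{\le d-1})$. So $S$ lies in a compact subset of $\HH_d$. On $\HH_d$ the map $F_{q,\O}$ is locally Lipschitz by Theorem~\ref{semicont}, so $S$ is closed, hence compact, and $F_{q,\O}$ attains its maximum on $S$. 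That maximum is $M_q(\O)$, which yields $H_q\in\s(\HH_d)$.
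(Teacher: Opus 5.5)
Your plan is the paper's own. The paper proves this theorem only by saying it goes ``in the same way'' as Theorem~\ref{min q large}, and your three steps are that mirror image:
\begin{itemize}
\item the constant $c<1$, obtained from compactness of $\s(\HH_{\le d-1})$ and upper semicontinuity;
\item the comparison with $\mathcal E$, with $T_{\max}(\O)$ in the role of $\lambda_{\min}(\O)$;
\item compactness of the superlevel set.
\end{itemize}
Your check that the superlevel set stays a positive distance from $\s(\HH_{\le d-1})$ is more careful than the paper's corresponding sentence.

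The gap is the step you flagged yourself: the pointwise strict inequality $\lambda_H(\O)<\lambda(\O)$ for $H\in\s(\HH_{\le d-1})$. None of the routes you sketch establishes it.
\begin{itemize}
\item The slicewise bound $\lambda(\O_x)<\lambda(\O)$ is false for slices near the edge of $\O$, which are small and have large eigenvalue. Only $\inf_x\lambda(\O_x)$ can lie below $\lambda(\O)$, and that is exactly what has to be shown.
\item The uniqueness part of the $\lambda_{\max}$ theorem needs the Wiener condition, which a general bounded domain need not satisfy.
\end{itemize}

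The missing idea is a test-function argument that needs no boundary regularity. Let $u$ be a first Dirichlet eigenfunction of the Laplacian on $\O$; it exists because $\O$ is bounded. Let $\pi$ be the orthogonal projection onto $(\ker H)^\perp$. Since $\|H\|=1$, we have $H(\xi)=H(\pi\xi)\le|\pi\xi|$, hence
\[
\lambda_H(\O)\le\frac{\int_\O|\pi\nabla u|^2\,dx}{\int_\O u^2\,dx}=\lambda(\O)-\frac{\int_\O|(I-\pi)\nabla u|^2\,dx}{\int_\O u^2\,dx}.
\]
The last integral is positive. Otherwise $u$, extended by zero, would satisfy $\partial_e u=0$ for every $e\in\ker H$. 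It would then be invariant under translations along $e$, and having compact support it would vanish. The same argument with the torsion function gives $T_H(\O)>T(\O)$, which the paper uses without comment in Theorem~\ref{min q large}.

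A minor point: your bound $F_{q,\O}(H)/F_{q,\O}(\mathcal E)\le c\,(T_{\max}(\O)/T(\O))^q$ holds only for $q\ge0$. For $q\le0$, use $T_H(\O)\ge T(\O)$, which gives the ratio $\le c$ directly.
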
 

We consider now, the particular case of $H \in \mathcal{Q}$ to deal with $q$ small for the infimum problem and large for the supremum problem.

\begin{defi} The quadratic isoanisotropic constants are defined as
\[\begin{split}
&\tilde{M}_q(\O)=\sup\left\{F_{q,\O}(H)\ :\ H\in\s(\mathcal{Q})\right\},\\
&\tilde{m}_q(\O)=\inf\left\{F_{q,\O}(H)\ :\ H\in\s(\mathcal{Q})\right\}.
\end{split}\]
\end{defi}

\begin{propo}\label{euclidean ellipsoid}Let $E$ be an ellipsoid. For every $q \leq 1$ and $j \in \{1,\dots,d\}$, we have
\[
\lambda(E)T^{q}(E) > \lambda_{H_j}(E)T_{H_j}^{q}(E)\left(a_j^2\sum_{i=1}^{d} \frac{1}{a_i^2}\right)^{1-q} > \lambda_{H_j}(E)T_{H_j}^{q}(E).
\]
where $a_i$ are the principal axis of $E$ and $H_j(\xi) = |\xi_j|$.
    
\end{propo}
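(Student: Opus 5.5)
The plan is to reduce the first inequality, via the explicit formulas already available, to a comparison of classical Dirichlet eigenvalues that does not involve $q$. Write $E=E_d(a)$ and set
\[
S=a_j^2\sum_{i=1}^{d}\frac{1}{a_i^2}\ \ge 1 .
\]
In fact $S>1$ as soon as $d\ge2$, since every term with $i\neq j$ is positive.

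\textbf{Step 1: the torsions and the eigenvalue of $H_j$.} From the classical formula for $T(E_d(a))$ and Corollary~\ref{corellipsoid} we get
\[
T(E)=\frac{\omega_d}{d+2}\Big(\prod_i a_i\Big)\Big(\sum_i a_i^{-2}\Big)^{-1},\qquad
T_{H_j}(E)=\frac{\omega_d}{d+2}\Big(\prod_i a_i\Big)a_j^2 .
\]
Hence $T(E)=S^{-1}T_{H_j}(E)$ and $T^q(E)=S^{-q}T^q_{H_j}(E)$. Corollary~\ref{corellipsoid} also gives $\lambda_{H_j}(E)=\pi^2/(4a_j^2)$.

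\textbf{Step 2: reduction.} Dividing both sides of the first claimed inequality by $T^q_{H_j}(E)S^{-q}$, it becomes
\[
\lambda(E)>\lambda_{H_j}(E)\,S=\frac{\pi^2}{4}\sum_{i=1}^{d}\frac{1}{a_i^2}.
\]
This target is independent of $q$ and of $j$.

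\textbf{Step 3: the eigenvalue comparison.} The right-hand side above is exactly the first Dirichlet eigenvalue of the box $Q=\prod_i(-a_i,a_i)$, by separation of variables. Since $E\subset Q$, monotonicity of $\lambda$ under inclusion gives $\lambda(E)\ge\lambda(Q)$.

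\textbf{Step 4: strictness (the main point).} Suppose $\lambda(E)=\lambda(Q)$. Extend the first eigenfunction $u$ of $E$ by zero to $Q$. Then $u\in H^1_0(Q)$ attains the Rayleigh quotient $\lambda(Q)$, so $u$ must be a first eigenfunction of $Q$. Such eigenfunctions are real-analytic and, by the strong maximum principle, do not vanish in $Q$. But $u$ vanishes on $Q\setminus\overline E$, which has positive measure. This contradiction gives $\lambda(E)>\lambda(Q)$, and hence the first inequality, for every $q$.

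\textbf{Step 5: the second inequality.} It is equivalent to $S^{1-q}>1$. This holds whenever $q<1$ and $d\ge2$, because then $S>1$. For $q=1$ the factor $S^{1-q}$ equals $1$, so the second inequality becomes an equality. The strict version therefore really needs $q<1$ (with $d\ge2$), and I would note this in the statement. The chain still yields $\lambda(E)T^q(E)>\lambda_{H_j}(E)T^q_{H_j}(E)$ for all $q\le1$, because the first inequality is strict.
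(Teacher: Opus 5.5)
Your argument is correct, and its skeleton is the paper's. The identity $T(E)=S^{-1}T_{H_j}(E)$, from Corollary~\ref{corellipsoid} and the classical formula for $T(E_d(a))$, reduces the first inequality, for every real $q$, to the $q$-independent bound $\lambda(E)>\frac{\pi^2}{4}\sum_i a_i^{-2}$. The difference lies in how you prove this bound.

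The paper keeps the domain fixed and splits the energy. Since $|\xi|^2=\sum_iH_i^2(\xi)$, minimizing each quotient $\int_E|\partial_iu|^2/\int_Eu^2$ separately gives $\lambda(E)\ge\sum_i\lambda_{H_i}(E)=\sum_i\frac{\pi^2}{4a_i^2}$; this is what the paper calls subadditivity of $H\mapsto\lambda_H$. You instead keep the Euclidean energy and enlarge the domain to the circumscribed box $Q$, using monotonicity and separation of variables.

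The two bounds coincide because $Q$ is exactly the equality case of the paper's splitting: $\lambda(Q)=\sum_i\lambda_{H_i}(Q)$. Moreover $\lambda_{H_i}(Q)=\lambda_{H_i}(E)$, since the longest chords of $E$ and of $Q$ in direction $e_i$ both have length $2a_i$.

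What each route buys:
\begin{itemize}
\item The paper's route stays inside the anisotropic framework. It gives $\lambda_H(\Omega)\ge\sum_i\lambda_{H_i}(\Omega)$ for any $\Omega$ and any decomposition $H^2=\sum_iH_i^2$. However, it leaves the strictness unjustified. One needs, for instance, that $\lambda_{H_i}(E)$ is not attained, which holds because chords of $E$ in direction $e_i$ away from the central axis are shorter than $2a_i$.
\item Your route relies on the specific geometry of the ellipsoid but is entirely classical. Your unique-continuation argument in Step~4 settles strictness cleanly. It needs $d\ge2$ so that $Q\setminus\overline E$ is nonempty; for $d=1$ all the quantities in the statement coincide.
\end{itemize}

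Your observation in Step~5 is also correct. At $q=1$ the second inequality is an equality, so it is strict only for $q<1$. Neither the statement nor the paper's proof accounts for this.
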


\begin{proof} Without loss of generality we can assume that $E = E_d(a)$. By subadditivity of $H \mapsto \lambda_{H}(\O)$ we have for every $j \in \{1,\dots,d\}$
\[
\lambda(E) > \sum_{i=1}^{d}\lambda_{H_i}(E) = \sum_{i=1}^{d} \frac{\pi^2}{4a_i^2} = \lambda_{H_j}(E) a_j^2\sum_{i=1}^{d} \frac{1}{a_i^2}.
\]
Due to Corollary~\ref{corellipsoid} we have
\[
T(E) = \frac{\omega_d}{d+2}\left(\prod_{i=1}^{d}a_i\right)\left(\sum_{i=1}^{d} \frac{1}{a_i^2}\right)^{-1} = T_{H_j}(E)\left(a_j^2\sum_{i=1}^{d} \frac{1}{a_i^2}\right)^{-1}
\]
for every $j \in \{1,\dots,d\}$. Putting both inequalities together gives the desired result.
\end{proof}

\begin{teor}\label{teorquadratic} Let $E$ be an ellipsoid with $a_i$ the lengths of its semi-major axes. For every $q \leq 1$, we have
\[
\tilde{m}_q(E) = \inf_{H \in \s(\HH_{1})}F_{q,E}(H) = \frac{\pi^2|E|^q}{4(d+2)^q}\left(\max_{i\in\{1,\dots,d\}}a_i\right)^{2(q-1)}.
\]
Moreover, if $q < 1$ and $e$ is a direction corresponding to the longest semi-major axis, and $H(\xi) = |\langle e,\xi\rangle|$, then
\[
\tilde{m}_{q}(E) = F_{q,E}(H)
\]
and these are the only minimizers. If $q =1$, then every $H \in \s(\HH_1)$ is a minimizer.
\end{teor}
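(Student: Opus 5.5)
The plan is to get an explicit lower bound for $F_{q,E}$ on all of $\s(\mathcal{Q})$ that depends on $H$ through one scalar, and to show that this bound is attained exactly on the rank-one seminorms singled out in the statement. By a rotation we may assume $E=E_d(a)$. We write $E=\{x: x^TPx<1\}$ with $P=\operatorname{diag}(a_1^{-2},\dots,a_d^{-2})$. Every $H\in\s(\mathcal{Q})$ has the form $H^2(\xi)=\xi^TM\xi$, where $M$ is symmetric positive semidefinite with largest eigenvalue $1$ (this is the normalization $\|H\|=1$). We diagonalize $M=\sum_i\alpha_i^2 e_ie_i^T$ with $\{e_i\}$ orthonormal and $\max_i\alpha_i=1$.

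\emph{Torsion.} For nondegenerate $M$ we test the explicit function $u=c(1-x^TPx)$. We have $-\dive(M\nabla u)=2c\,\operatorname{tr}(MP)$, so $u$ solves the torsion equation when $c=(2\operatorname{tr}(MP))^{-1}$. Since the problem is convex, $u$ is the maximizer in \eqref{torsion}. Using $\int_E(1-x^TPx)\,dx=\frac{2}{d+2}|E|$ this gives
\[
T_H(E)=\frac{|E|}{(d+2)\operatorname{tr}(MP)}.
\]
As a check, this agrees with the formula for $T(E_d(a))$ when $M=I$, and with Proposition~\ref{propoellipsoid} when $M=vv^T$. For degenerate $M$ we extend the formula by approximation with $M+\varepsilon I$. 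Both sides are continuous: the left side by Theorem~\ref{contquadratic}, since $E$ is convex, and the right side trivially, since $\operatorname{tr}(MP)>0$ whenever $M\neq0$. This degenerate case is the step I expect to need the most care. If continuity turns out to be delicate, the fallback is to verify directly that $u$ is optimal in the degenerate variational problem, via the first variation $\int M\nabla u\cdot\nabla\varphi=\int\varphi$.

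\emph{Eigenvalue.} We have $\int H^2(\nabla u)=\sum_i\alpha_i^2\int\langle\nabla u,e_i\rangle^2$, and the infimum of a sum is at least the sum of the infima. Together with Proposition~\ref{propoellipsoid} this gives
\[
\lambda_H(E)\ \ge\ \sum_i\alpha_i^2\lambda_{H_{e_i}}(E)=\frac{\pi^2}{4}\sum_i\alpha_i^2\, e_i^TPe_i=\frac{\pi^2}{4}\operatorname{tr}(MP).
\]
Equality holds when $M$ has rank one, again by Proposition~\ref{propoellipsoid}. Combining the two bounds,
\[
F_{q,E}(H)\ \ge\ \frac{\pi^2|E|^q}{4(d+2)^q}\,\operatorname{tr}(MP)^{1-q},
\]
again with equality for rank-one $M$.

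\emph{Minimizing the trace.} Let $e$ be a unit eigenvector of $M$ with eigenvalue $1$. Then $\operatorname{tr}(MP)\ge e^TPe\ge\min_i a_i^{-2}=(\max_i a_i)^{-2}$. Since $1-q\ge0$, this gives the claimed lower bound for $\tilde m_q(E)$. The bound is attained by $H(\xi)=|\langle e,\xi\rangle|$ with $e$ along a longest axis, which lies in $\s(\HH_1)$. Moreover, on $\s(\HH_1)$ Proposition~\ref{propoellipsoid} gives the exact value $F_{q,E}(H_v)=\frac{\pi^2|E|^q}{4(d+2)^q}s^{1-q}$ with $s=\sum_iv_i^2/a_i^2$. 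This shows that the infimum over $\s(\HH_1)$ and the infimum over $\s(\mathcal{Q})$ coincide.

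\emph{Uniqueness and the case $q=1$.} For $q<1$, equality in the chain forces $\operatorname{tr}(MP)=(\max a_i)^{-2}$. This requires $\alpha_i=0$ for every eigendirection other than $e$, so $M=ee^T$, and it requires $e^TPe=\min_i a_i^{-2}$, so $e$ lies in the eigenspace of the longest semi-axes. Hence the only minimizers are those described in the statement. For $q=1$ the factor $\operatorname{tr}(MP)^{1-q}$ equals $1$. The lower bound is then the constant $\frac{\pi^2|E|}{4(d+2)}$, and every rank-one $H\in\s(\HH_1)$ attains it, because the eigenvalue inequality is an equality in the rank-one case.
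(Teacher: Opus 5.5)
Your proof is correct, and it takes a genuinely different and more economical route than the paper's. Both arguments use the same two ingredients: the splitting $\lambda_H(E)\ge\sum_i\alpha_i^2\lambda_{H_{e_i}}(E)$ and the explicit torsion of ellipsoids. The paper, however, uses them only comparatively. For a quadratic \emph{norm} $H$ it changes variables to the Euclidean norm on the ellipsoid $MR^TE$, then invokes Proposition~\ref{euclidean ellipsoid} to show that $F_{q,E}(H)$ exceeds $F_{q,E}$ of a particular rank-one seminorm. Quadratic seminorms of intermediate rank $k\in\{2,\dots,d-1\}$ need a separate limiting argument: approximating norms $G_n$, ordering the semi-axes of $M_nR_n^TE$, and a Hausdorff-measure contradiction showing that $k$ of them stay bounded. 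Only at the end is the infimum over $\s(\HH_1)$ computed from Proposition~\ref{propoellipsoid}.

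Your closed formula $T_H(E)=|E|/\big((d+2)\operatorname{tr}(MP)\big)$, valid for every positive semidefinite $M\neq0$, replaces all of this with one rank-independent bound, $F_{q,E}(H)\ge\frac{\pi^2|E|^q}{4(d+2)^q}\operatorname{tr}(MP)^{1-q}$. Uniqueness for $q<1$ then follows from the equality case $\operatorname{tr}\big((M-ee^T)P\big)=0\Rightarrow M=ee^T$. In particular, your strictness comes from the trace, so you never need the strict inequality $\lambda(E)>\sum_i\lambda_{H_i}(E)$. The paper's uniqueness leans on that inequality, but superadditivity alone only gives $\ge$. In exchange, the paper's route isolates Proposition~\ref{euclidean ellipsoid} as a standalone comparison for the Euclidean functional.

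The degenerate step you flagged is harmless, and your fallback is the cleaner way to handle it. For any $M\succeq0$ with $M\neq0$ one has $\operatorname{tr}(MP)>0$, and $u$ solves $-\dive(M\nabla u)=1$ classically. Hence $\int_E M\nabla u\cdot\nabla v\,dx=\int_E v\,dx$ for all $v\in H^1_0(E)$. Cauchy--Schwarz for the semi-inner product $(w,v)\mapsto\int_E M\nabla w\cdot\nabla v\,dx$ then gives $\big(\int_E v\,dx\big)^2\le\int_E u\,dx\int_E H^2(\nabla v)\,dx$, with equality at $v=u$. This makes both the case distinction and the appeal to Theorem~\ref{contquadratic} unnecessary.
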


\begin{proof} Take $H \in \mathcal{Q}\cap\HH_d$ with $\lVert H\rVert = 1$, there is $R \in O(d)$ such that
\[
H(R\xi) = \sqrt{\sum_{i=1}^{d}\alpha_i^2\xi_i^2}.
\]
Denote
\[
M =\begin{bmatrix} 
    \alpha_1^{-1} & 0 & \dots \\
    \vdots & \ddots & \\
    0 &        & \alpha_{d}^{-1}.
    \end{bmatrix}
\]
Let $j \in \{1,\dots,d\}$ be such that $\alpha_j = 1$. From Proposition~\ref{propochange} and~\ref{euclidean ellipsoid}, denoting the principal axis of $MR^TE$ as $a_{i,H}$, we have
\begin{align*}
F_{q,E}(H) &= \lambda_{H}(E)T_{H}^{q}(E) = \lambda_{H\circ R}(R^TE)T_{H \circ R}^q(R^T E) = \lambda(MR^TE)|\det M|^{-q}T^{q}(MR^TE) \\
&> \left(a_{j,H}^2\sum_{i=1}^{d} \frac{1}{a_{i,H}^2}\right)^{1-q}\lambda_{H_j}(MR^TE)|\det M|^{-q}T_{H_j}^{q}(MR^TE) \\ 
&= \left(a_{j,H}^2\sum_{i=1}^{d} \frac{1}{a_{i,H}^2}\right)^{1-q}\lambda_{H_j \circ M^{-1}}(R^TE) T_{H_j \circ M^{-1}}(R^TE) \\
&= \left(a_{j,H}^2\sum_{i=1}^{d} \frac{1}{a_{i,H}^2}\right)^{1-q}\lambda_{H_j \circ M^{-1}\circ R^T}(E)T_{H_j \circ M^{-1}\circ R^T}^{q}(E) \geq \tilde{m}_{q}(E),
\end{align*}
where the last inequality comes from the fact that
$\lVert H_j \circ M^{-1}\circ R^T\rVert = 1$, since
\[
(H_j \circ M^{-1}\circ R^T)(\xi) = |\langle M^{-1}R^T\xi,e_{j}\rangle| = |\langle R^{T}\xi, M^{-1}e_j\rangle| = |\langle \xi, Re_j\rangle|.
\]
Now, let $H \in \s(\HH_{k})\cap\mathcal{Q}$ with $k \in \{2,3,\dots,d-1\}$. There is a sequence $\{G_n\} \subset \s(\HH_d)\cap\mathcal{Q}$ that converges to $H$. Let $R_n$ be such that
\[
G_n(R_n\xi) = \sqrt{\sum_{i=1}^{d}\alpha_{i,n}^2\xi_i^2},
\]
where $\alpha_{d,n} = 1$ for every $n$ and
\[
\lim \alpha_{i,n} = \alpha_{i} \ \text{for every} \ i \in \{1,\dots,d\} , \text{where} \ \alpha_{i} = 0 \ \text{for every} \ i \in \{1,\dots,d-k\}.
\]
We can assume that up to a subsequence $R_n$ converges to some $R$ satisfying
\[
H(R\xi) = \sqrt{\sum_{i=d-k+1}^{d}\alpha_{i}^2\xi_i^2}
\]
Denote
\[
M_n =\begin{bmatrix} 
    \alpha_{1,n}^{-1} & 0 & \dots \\
    \vdots & \ddots & \\
    0 &        & \alpha_{d,n}^{-1}
    \end{bmatrix},
\]
by our last computation, we have
\begin{equation}\label{ineqGn}
F_{q,E}(G_n) > \left(a_{d,G_n}^2\sum_{i=1}^{d} \frac{1}{a_{i,G_n}^2}\right)^{1-q} \tilde{m}_{q}(E).
\end{equation}
Notice that the choice of which axis of $M_nR_n^TE$ is $a_{i,G_n}$ is free, so we choose it such that $a_{i,G_n} \geq a_{j,G_n}$ if $i \leq j$, so there is $A_n \in O(d)$ such that
\[
A_nM_nR_n^TE = E_{d}((a_{1,G_n},\dots,a_{d,G_n})) =: E_n.
\]
We aim to prove that at least $k$ of these axis have a positive limit. Notice first that the limit of the axis are never zero due to \eqref{ineqGn} and $F_{q,E}(G_n)$ being bounded. Assume by contradiction that
\[
\lim_{n \to \infty}a_{i,G_n} = +\infty \ \text{for every} \ i \in\{1,\dots,d-k+1\}
\]
therefore, in the Hausdorff sense we have
\[
\lim E_n = \R^{d-k+1}\times E',
\]
where $E'$ is an ellipsoid in $\R^{k-1}$, which means that for every linear subspace $V$ of dimension $k$ we have
\[
\mathcal{H}^{k}(V\cap\lim E_n) = +\infty.
\]
On the other hand, take $W_n = A_n(\{0\}\times \R^k)$ and $W = \lim W_n$. Notice that since $\lim \alpha_{i,n} > 0$ for $i \in \{d-k+1,\dots,k\}$ then there is a compact set $D \subset \R^k$ such that $M_nR_n^TE\cap (\{0\}\times \R^k) \subset \{0\}\times D$, therefore
\begin{align*}
\mathcal{H}^{k}(\lim E_n \cap W) &= \lim_{n \to \infty}\mathcal{H}^{k}(A_nM_nR_n^TE\cap A_nW_n) = \lim_{n \to \infty} \mathcal{H}^k(A_n(M_nR_n^TE\cap (\{0\}\times \R^k))) \\
&\leq \limsup_{n \to \infty} \mathcal{H}^{k}(A_n(\{0\}\times D)) < +\infty,
\end{align*}
giving a contradiction, hence there is $i_0 \in \{1,\dots,d-k+1\}$ such that $\lim a_{i_0,G_n} < +\infty$, which implies that $\gamma_{i} : =\lim a_{i,G_n} < +\infty$ for $i \in \{d-k+1,\dots,d\}$. Therefore, using this and Theorem~\ref{teorquadratic} in \eqref{ineqGn} we have
\begin{align*}
F_{q,E}(H) &= \lim_{n \to \infty}F_{q,E}(G_n) \geq \lim_{n\to \infty}\left(a_{d,G_n}^2\sum_{i=1}^{d} \frac{1}{a_{i,G_n}^2}\right)^{1-q} \tilde{m}_{q}(E) \\
&\geq \lim_{n\to \infty}\left(a_{d,G_n}^2\sum_{i=d-k+1}^{d} \frac{1}{a_{i,G_n}^2}\right)^{1-q} \tilde{m}_{q}(E) = \left(\gamma_{d}^2\sum_{i=d-k+1}^{d} \frac{1}{\gamma_{i}^2}\right)^{1-q} \tilde{m}_{q}(E) > \tilde{m}_{q}(E).
\end{align*}
We now compute $\tilde{m}_{q}(E)$, for each $v \in \s^{d-1}$ we define $H_v(\xi) = |\langle v,\xi\rangle|$. Let $A \in O(d)$ be such that $AE = E_d(a)$. We can assume without loss of generality $a_1 \leq a_2 \leq \dots \leq a_d$, write $Av =\tilde{v} = (\tilde{v}_1,\dots,\tilde{v}_d)$, from Proposition~\ref{propoellipsoid}, we have
\begin{align*}
\lambda_{H_v}(E)T_{H_v}^q(E) &= \lambda_{H_{\tilde{v}}}(AE)T_{H_{\tilde{v}}}^q(AE) =\frac{\pi^2|E|^q}{4(d+2)^q}\left(\sum_{i=1}^{d}\frac{\tilde{v_i}^2}{a_i^2}\right)^{1-q} \\ 
&= \frac{\pi^2|E|^q}{4(d+2)^q}\left(\sum_{i=1}^{d-1}\frac{\tilde{v_i}^2}{a_i^2}+\left(1-\sum_{i=1}^{d-1}\tilde{v_i}^2\right)\frac{1}{a_d^2}\right)^{1-q} \\
&=\frac{\pi^2|E|^q}{4(d+2)^q}\left(\sum_{i=1}^{d-1}\left(\frac{1}{a_i^2}-\frac{1}{a_d^2}\right)\tilde{v_i}^2+\frac{1}{a_d^2}\right)^{1-q} \geq \frac{\pi^2|E|^q}{4(d+2)^q}\left(\frac{1}{a_d^2}\right)^{1-q}.
\end{align*}
Notice that this inequality is actually an equality if we take $v = A^Te_d$, hence
\[
\tilde{m}_{q}(E) = \frac{\pi^2|E|^q}{4(d+2)^q}a_d^{2(q-1)},
\]
as required.
\end{proof}

As a consequence of Theorem~\ref{teorquadratic} together with Theorem~\ref{contquadratic} we have

\begin{cor}\label{cormaxquadratic} Let $E$ be an ellipsoid. For every $q \leq 1$, there is $H \in \s(\HH)\setminus\s(\HH_1)$ such that
\[
\tilde{M}_q(E) = F_{q,E}(H).
\]    
\end{cor}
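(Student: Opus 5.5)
Our plan combines compactness of $\s(\mathcal{Q})$ with the continuity of $F_{q,E}$ on $\mathcal{Q}$, and then uses Theorem~\ref{teorquadratic} to rule out maximizers of rank one.

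\textbf{Existence of a maximizer.} The set $\s(\mathcal{Q})$ is closed and bounded in $\HH$, because a limit of seminorms whose squares are quadratic forms still has a quadratic square. By the Heine--Borel property recalled in Section~\ref{s_preli}, $\s(\mathcal{Q})$ is therefore compact. An ellipsoid is convex, so Theorem~\ref{contquadratic} shows that both $H\mapsto\lambda_H(E)$ and $H\mapsto T_H(E)$ are continuous on $\mathcal{Q}$. Consequently $F_{q,E}(H)=\lambda_H(E)T_H^q(E)$ is continuous on $\s(\mathcal{Q})$, as long as $T_H(E)\in(0,+\infty)$ so that the power $T^q$ makes sense when $q\le 0$.

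This positivity and finiteness is the step we must check. For $H\ne0$ with $\|H\|=1$ we have $H\le|\cdot|$, which gives $T_H(E)\ge T(E)>0$. Finiteness of $T_H(E)$ for degenerate $H$ follows from Proposition~\ref{propocomputedegenerate}, since the sections of $E$ are bounded. The same proposition gives $\lambda_H(E)>0$. A continuous function on a compact set attains its supremum, so there is $H^*\in\s(\mathcal{Q})$ with $\tilde M_q(E)=F_{q,E}(H^*)$.

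\textbf{Ruling out $H^*\in\s(\HH_1)$.} Since $\s(\mathcal{Q})\subset\s(\HH)$, it only remains to exclude this case. Every element of $\s(\HH_1)$ has the form $|\langle\cdot,v\rangle|$ and so lies in $\s(\mathcal{Q})$. By the computation in the proof of Theorem~\ref{teorquadratic},
\[
F_{q,E}(H_v)=\frac{\pi^2|E|^q}{4(d+2)^q}\Big(\sum_i \tilde v_i^2/a_i^2\Big)^{1-q},
\]
and for $q\le1$ this is maximized by taking $v$ along the shortest axis, $a_1$. To show that a rank-one seminorm cannot be the maximizer, it suffices to exhibit a $G\in\s(\mathcal{Q})\setminus\s(\HH_1)$ with $F_{q,E}(G)$ strictly larger than every value on $\s(\HH_1)$.

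\textbf{Choosing the competitor.} We take $G$ to be the Euclidean norm, or more generally the quadratic norm that the proof of Theorem~\ref{teorquadratic} transports back to a rank-one seminorm. In the proof of Theorem~\ref{teorquadratic} we showed, via Propositions~\ref{propochange} and~\ref{euclidean ellipsoid}, that for any $H\in\mathcal{Q}\cap\HH_d$ there is an associated $H_j\circ M^{-1}\circ R^T\in\s(\HH_1)$ with
\[
F_{q,E}(H)>\Big(a_{j,H}^2\textstyle\sum_i a_{i,H}^{-2}\Big)^{1-q}F_{q,E}(H_j\circ M^{-1}\circ R^T)\ \ge\ F_{q,E}(H_j\circ M^{-1}\circ R^T).
\]
We would like to apply this to a rank-one maximizer $H_v$ itself, which would give $F_{q,E}(\text{some norm})>F_{q,E}(H_v)$. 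The map $H\mapsto Re_j$ sends $H=|\cdot|$ with $R=I$ and $\alpha_j=1$ to $e_j$. So we choose $G=$ Euclidean norm, take $j$ to be the index of the shortest axis (after rotating so that $E=E_d(a)$), and invoke Proposition~\ref{euclidean ellipsoid} directly:
\[
F_{q,E}(\mathcal{E})>F_{q,E}(H_j)=\max_{\s(\HH_1)}F_{q,E}.
\]
This shows $\tilde M_q(E)>\sup_{\s(\HH_1)}F_{q,E}$, so the maximizer $H^*$ is not in $\s(\HH_1)$.

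The main obstacle we expect is the identification $\max_{\s(\HH_1)}F_{q,E}=F_{q,E}(H_j)$ with $j$ the shortest axis. This rests on the monotonicity of $t\mapsto t^{1-q}$ for $q\le1$, and the case $q=1$ needs care, since then $F_{q,E}$ is constant on $\s(\HH_1)$. In that case Proposition~\ref{euclidean ellipsoid} still yields the strict inequality.
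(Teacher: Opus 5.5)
Your proposal is correct and follows the route the paper indicates: existence of a maximizer on the compact set $\s(\mathcal{Q})$ via Theorem~\ref{contquadratic}, then exclusion of $\s(\HH_1)$ by comparing the best rank-one value with the Euclidean norm through Proposition~\ref{euclidean ellipsoid}, where the best rank-one direction is the shortest axis by the explicit formula from Proposition~\ref{propoellipsoid}. The detour in your third paragraph about applying the norm inequality to $H_v$ itself is unnecessary, since that inequality holds only for norms; the argument you finally settle on is sound, with the implicit assumption $d\ge2$ (for $d=1$ the set $\s(\HH)\setminus\s(\HH_1)$ is empty).
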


It is important to notice that it is not clear that the threshold for $\tilde{m}_{q}(E)$ is $q = 1$. Due to Theorem~\ref{contquadratic} there is always a minimizer for $\tilde{m}_{q}(E)$, however, for $q > 1$ we do not have a characterization for it as in Theorem~\ref{teorquadratic}. It is expected that in this case, the minimizers will no longer be in $\s(\HH_1)$, and we give a result in this direction.

\begin{propo}\label{ell2}
Let $E$ be an ellipsoid with $a_i$ the lengths of its semi-major axes ordered as $a_1\ge a_2\ge\dots\ge a_d$ and let
\[
q_E = 1 + \frac{\log{2}}{\log{\left(1+\frac{a_{d}^2}{a_{d-1}^2}\right)}}.
\]
If $q > q_E$, then
\[
m_q(E) = \inf_{H \in \s(\HH)\setminus\s(\HH_{1})}F_{q,E}(H).
\]
and
\[
\tilde{m}_q(E) = \inf_{H \in \s(\mathcal{Q})\setminus\s(\HH_{1})}F_{q,E}(H).
\]
\end{propo}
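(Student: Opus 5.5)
The plan is to exhibit, for $q>q_E$, a single quadratic seminorm of rank two whose value of $F_{q,E}$ beats the infimum of $F_{q,E}$ over $\s(\HH_1)$. Since this competitor lies in $\s(\mathcal{Q})\setminus\s(\HH_1)$, which is contained in $\s(\HH)\setminus\s(\HH_1)$, both identities follow at once: in each case the infimum over the complement of $\s(\HH_1)$ is strictly smaller than the infimum over $\s(\HH_1)$, so the latter set can be discarded.

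First compute $\inf_{\s(\HH_1)}F_{q,E}$. By the last computation in the proof of Theorem~\ref{teorquadratic}, which is valid for every $q$, we have
\[
F_{q,E}(H_v)=\frac{\pi^2|E|^q}{4(d+2)^q}\Big(\sum_i \tilde v_i^2/a_i^2\Big)^{1-q}.
\]
For $q>1$ the exponent $1-q$ is negative, so $F_{q,E}(H_v)$ is smallest when $\sum_i\tilde v_i^2/a_i^2$ is largest. The maximum of this sum is $1/a_d^2$, attained in the direction of the shortest axis. Hence
\[
\inf_{\s(\HH_1)}F_{q,E}=\frac{\pi^2|E|^q}{4(d+2)^q}\,a_d^{2(q-1)}.
\]

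The competitor is $G(\xi)=\sqrt{\xi_{d-1}^2+\xi_d^2}$, written in principal coordinates so that $E=E_d(a)$. It satisfies $\|G\|=1$, lies in $\mathcal{Q}$, and belongs to $\HH_2$.
\begin{itemize}
\item By Proposition~\ref{propocomputedegenerate}, its torsion is $T_G(E)=\int T(E_x)\,dx$. The slices $E_x$ are two-dimensional ellipses with semi-axes $s\,a_{d-1}$ and $s\,a_d$, where $s=\sqrt{1-\sum_{i\le d-2}x_i^2/a_i^2}$. The explicit ellipsoid torsion formula gives each slice, and integrating (a Beta-function computation like the one in Proposition~\ref{propoellipsoid}) yields a closed form
\[
T_G(E)=\frac{|E|}{d+2}\Big(\frac1{a_{d-1}^2}+\frac1{a_d^2}\right)^{-1}.
\]
I expect exactly the constant $|E|/(d+2)$, by analogy with the case $k=1$.
\item For the eigenvalue, $\lambda_G(E)=\inf_x\lambda(E_x)=\lambda(E_0)$, since the central slice is the largest. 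Here I would use the subadditivity already used in Proposition~\ref{euclidean ellipsoid}: $\lambda(E_0)\ge\frac{\pi^2}{4}\big(a_{d-1}^{-2}+a_d^{-2}\big)$. That inequality points the wrong way for a minimization, so I need an upper bound on $\lambda(E_0)$. The ratio $\lambda(E_0)\big/\big(\frac{\pi^2}{4}(a_{d-1}^{-2}+a_d^{-2})\big)$ must be controlled by a constant $C$.
\end{itemize}

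With these, and writing $S=1+a_d^2/a_{d-1}^2$, the comparison becomes
\[
\frac{F_{q,E}(G)}{\inf_{\s(\HH_1)}F_{q,E}}\le C\,S^{1-q},
\]
which is less than $1$ exactly when $(q-1)\log S>\log C$. The threshold $q_E=1+\log 2/\log S$ corresponds to $C=2$. So the key estimate is $\lambda(E_0)\le 2\cdot\frac{\pi^2}{4}\big(a_{d-1}^{-2}+a_d^{-2}\big)$ for a planar ellipse with semi-axes $a_{d-1},a_d$.

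I would try to prove this with the test function $\cos(\pi x/2a_{d-1})\cos(\pi y/2a_d)$ restricted to the inscribed rectangle. That gives an upper bound of the form $\frac{\pi^2}{4}\,c\,(a^{-2}+b^{-2})$. For the inscribed rectangle with half-sides $a/\sqrt2$ and $b/\sqrt2$, $c=2$ exactly, and it is the ellipse's own eigenvalue that is bounded above by it. This step, getting the factor $2$ from domain monotonicity with the inscribed rectangle, is the one I would check most carefully. If it fails I would adjust the competitor, for example by weighting the two coordinates of $G$ differently.
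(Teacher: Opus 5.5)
Your proposal is correct and follows the paper's proof. The paper uses the family $H_\alpha(\xi)=\sqrt{\alpha^2\xi_{d-1}^2+\xi_d^2}$; your $G$ is the case $\alpha=1$, which already suffices when $q>q_E$. It computes $T_{H_\alpha}(E)$ by the same slicing and Beta-integral argument, which confirms your expected constant $|E|/(d+2)$, and it bounds $\lambda(E_0)$ by the eigenvalue of an inscribed rectangle, as you do. The only difference is cosmetic: the paper inscribes the optimal rectangle, getting $\frac{\pi^2}{4}(a_{d-1}^{-1}+a_d^{-1})^2$, and then applies $(x+y)^2\le 2(x^2+y^2)$, whereas your rectangle with half-sides $a_{d-1}/\sqrt2$ and $a_d/\sqrt2$ gives the factor $2$ directly.
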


\begin{proof} We can assume without loss of generality that $E = E_d(a)$ with $a = (a_1,a_2,\dots,a_d)$. Denote for $\alpha \in [0,1]$ the seminorm
\[
H_{\alpha}(\xi) = \sqrt{\alpha^2\xi_{d-1}^2+\xi_d^2}.
\]
Notice that $H_{\alpha} \in \s(\HH)$ for every $\alpha \in [0,1]$. Assume now $q > q_E > 1$, from the computations of the proof of Theorem~\ref{teorquadratic} we have
\[
\inf_{H \in \s(\HH_{1})}F_{q,E}(H) = \frac{\pi^2|E|^q}{4(d+2)^q}a_d^{2(q-1)} = F_{q,E}(H_0).
\]
Take $\alpha \in (0,1]$ such that
\[
\alpha > \frac{a_{d-1}^2}{a_d^2}\left(2^{\frac{1}{q-1}}-1\right),
\]
this is possible because $q > q_E$. We compute $T_{H_{\alpha}}(E)$ using Proposition~\ref{propocomputedegenerate} and Proposition~\ref{propochange}. Let $G_{\alpha}(x,y) = \sqrt{\alpha^2x^2+y^2}$ be a norm in $\R^2$, then
\begin{align*}
T_{H_{\alpha}}(E) &= \int_{E_{d-2}(a_1.\dots,a_{d-2})} T_{G_{\alpha}}\left(\sqrt{1-\sum_{i=1}^{d-2}\frac{x_i^2}{a_i^2}}E_{2}(a_{d-1},a_d)\right) \ dx \\
&= \int_{E_{d-2}(a_1.\dots,a_{d-2})} \left(1-\sum_{i=1}^{d-2}\frac{x_i^2}{a_i^2}\right)^{2}T_{G_{\alpha}}\left(E_{2}(a_{d-1},a_d)\right) \ dx \\
&= T_{G_{\alpha}}\left(E_{2}(a_{d-1},a_d)\right)\left(\prod_{i=1}^{d-2}a_i\right)\int_{B^{d-2}}(1-|x|^2)^{2} \ dx \\
&= \alpha T(E_{2}(\alpha^{-1}a_{d-1},a_d))\left(\prod_{i=1}^{d-2}a_i\right)\frac{4\omega_d}{(d+2)\pi} \\
&= \alpha\frac{\pi\alpha^{-1}a_{d-1}a_d}{4}\left(\frac{1}{a_d^2}+\frac{\alpha^2}{a_{d-1}^2}\right)^{-1}\left(\prod_{i=1}^{d-2}a_i\right)\frac{4\omega_d}{(d+2)\pi} \\
&=\frac{\omega_d}{d+2}\left(\prod_{i=1}^{d}a_i\right)\left(\frac{1}{a_d^2}+\frac{\alpha^2}{a_{d-1}^2}\right)^{-1}.
\end{align*}
Also, from Proposition~\ref{propocomputedegenerate} and Proposition~\ref{propochange} we have
\[
\lambda_{H_{\alpha}}(E_{d}(a)) = \lambda_{G_{\alpha}}(E_{2}(a_{d-1},a_d)) = \lambda(E_{2}(\alpha^{-1}a_{d-1},a_d)).
\]
Now, notice that the following rectangle is contained in $E_{2}(\alpha^{-1}a_{d-1},a_d)$
\[
R_{\alpha} := \left(\frac{\alpha}{a_{d-1}}+\frac{1}{a_d}\right)^{-\frac{1}{2}}\left(-\sqrt{\frac{a_{d-1}}{\alpha}},\sqrt{\frac{a_{d-1}}{\alpha}}\right)\times\left(-\sqrt{a_d},\sqrt{a_d}\right) \subset E_{2}(\alpha^{-1}a_{d-1},a_d).
\]
Therefore
\begin{align*}
\frac{F_{q,E}(H_{\alpha})}{F_{q,E}(H_0)} &= \lambda(E_{2}(\alpha^{-1}a_{d-1},a_d))\left(\frac{\omega_d}{d+2}\left(\prod_{i=1}^{d}a_i\right)\left(\frac{1}{a_d^2}+\frac{\alpha^2}{a_{d-1}^2}\right)^{-1}\right)^q\left(\frac{\pi^2}{4a_d^2}\left(\frac{\omega_d}{d+2}\left(\prod_{i=1}^{d}a_i\right)a_d^2\right)^q\right)^{-1} \\
&= \frac{4a_d^2\lambda(E_{2}(\alpha^{-1}a_{d-1},a_d))}{\pi^2}\left(a_d^2\left(\frac{1}{a_d^2}+\frac{\alpha^2}{a_{d-1}^2}\right)\right)^{-q} \\
&\leq \frac{4a_d^2\lambda(R_{\alpha})}{\pi^2}\left(a_d^2\left(\frac{1}{a_d^2}+\frac{\alpha^2}{a_{d-1}^2}\right)\right)^{-q} = a_d^2\left(\frac{1}{a_d}+\frac{\alpha}{a_{d-2}}\right)^2\left(a_d^2\left(\frac{1}{a_d^2}+\frac{\alpha^2}{a_{d-1}^2}\right)\right)^{-q} \\
&\leq 2\left(a_d^2\left(\frac{1}{a_d^2}+\frac{\alpha^2}{a_{d-1}^2}\right)\right)^{1-q} < 1,
\end{align*}
where the last line is due to $q > q_E > 1$. Hence
\[
m_{q}(E) \leq \tilde{m}_{q}(E) \leq F_{q,E}(H_{\alpha}) < F_{q,E}(H_0) = \inf_{H \in \s(\HH_{1})}F_{q,E}(H),
\]
which concludes the proof.
\end{proof}

Regarding the study of $\tilde{M}_{q}(\O)$, we give a result when $\O = B^d$, but first a technical lemma.

\begin{lema}\label{lemaboundball} Let $\alpha_i\ge0$ and let
\[
H(\xi) = \sqrt{\sum_{i=1}^{d}\alpha_i^2\xi_i^2}.
\]
Then
\[
\lambda_{H}(B^d) \leq \left(\frac{1}{d}\sum_{i=1}^{d}\alpha_i^2\right)\lambda(B^d).
\]
\end{lema}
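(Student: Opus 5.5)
Since $\lambda_H(B^d)$ is an infimum, it suffices to exhibit one test function whose Rayleigh quotient is at most $\left(\frac1d\sum_i\alpha_i^2\right)\lambda(B^d)$. The natural candidate is the first Dirichlet eigenfunction $u$ of the Euclidean Laplacian on $B^d$. It is radial, $u(x)=\varphi(|x|)$, it lies in $H^1_0(B^d)$, and it satisfies $\int_{B^d}|\nabla u|^2\,dx=\lambda(B^d)\int_{B^d}u^2\,dx$.

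With this choice,
\[
\int_{B^d}H^2(\nabla u)\,dx=\sum_{i=1}^d\alpha_i^2\int_{B^d}(\partial_i u)^2\,dx .
\]
The key step is to show that all the integrals $\int_{B^d}(\partial_i u)^2\,dx$ are equal. This follows from symmetry. For $i\neq j$, let $P_{ij}$ be the coordinate permutation matrix swapping the $i$-th and $j$-th coordinates. It is orthogonal, it maps $B^d$ onto itself, and $u\circ P_{ij}=u$ because $u$ is radial. The change of variables $x\mapsto P_{ij}x$ exchanges $\partial_i u$ and $\partial_j u$, so the two integrals coincide. Summing over $i$ then gives
\[
\int_{B^d}(\partial_i u)^2\,dx=\frac1d\int_{B^d}|\nabla u|^2\,dx\qquad\text{for every }i .
\]

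Substituting this identity into the previous display yields
\[
\lambda_H(B^d)\le\frac{\int_{B^d}H^2(\nabla u)\,dx}{\int_{B^d}u^2\,dx}=\left(\frac1d\sum_{i=1}^d\alpha_i^2\right)\frac{\int_{B^d}|\nabla u|^2\,dx}{\int_{B^d}u^2\,dx}=\left(\frac1d\sum_{i=1}^d\alpha_i^2\right)\lambda(B^d),
\]
which is the claimed bound. The degenerate case where some $\alpha_i=0$ needs no separate treatment, since only the variational characterization of $\lambda_H$ is used.

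None of the steps is a real obstacle. The only point that needs justification is the radial symmetry of the first eigenfunction of the ball, which is standard: by simplicity of the first eigenvalue, the first eigenfunction is invariant under every rotation of $B^d$. If one prefers to avoid this, an alternative is to take any minimizer $u$ and average $(\partial_i u)^2$ over the orthogonal group. That route gives the same identity for the symmetrized integrand, but it requires care in selecting the test function, so the radial eigenfunction is the cleaner choice.
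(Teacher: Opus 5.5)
Your proposal is correct and is essentially the paper's argument: both use the radial first Dirichlet eigenfunction of the ball as a test function for $\lambda_H(B^d)$, and both reduce the bound to the identity $\int_{B^d}(\partial_i u)^2\,dx=\frac1d\int_{B^d}|\nabla u|^2\,dx$. The only difference is how that identity is obtained: the paper uses polar coordinates together with $\int_{\s^{d-1}}\omega_i^2\,d\omega=\omega_d$, whereas you use invariance of $u$ under coordinate swaps, which is slightly cleaner and needs only permutation symmetry of $u$.
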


\begin{proof} Let $u$ be the eigenfunction associated to $\lambda(B^d)$ normalized by $\lVert u\rVert_2=1$. Since $u$ is radial we can write $u(x) = f(|x|)$ for some function $f \colon [0,\infty) \to \R$. Therefore
\begin{align*}
\lambda_{H}(B^d) &\leq \int_{B^d}H^2(\nabla u)\,dx = \int_{B^d} H^2\left(f'(|x|)\frac{x}{|x|}\right)\,dx\\
&= \int_{0}^1\int_{\s^{d-1}}(f'(r))^2 H^2(\omega)r^{d-1}\,d\omega\,dr\\
&= \left(\int_{0}^{1}(f'(r))^2 r^{d-1}\,dr\right) \int_{\s^{d-1}}\sum_{i=1}^{d}\alpha_i^2\omega_i^2\,d\omega\\
&= \left(\frac{1}{d\omega_d}\int_{0}^1\int_{\s^{d-1}}(f'(r))^2 r^{d-1}\,d\omega\,dr\right)\left(\sum_{i=1}^{d}\alpha_i^2\int_{\s^{d-1}}\omega_i^2\,d\omega\right) \\
&= \frac{\lambda(B^d)}{d\omega_d}\omega_d\sum_{i=1}^{d}\alpha_i^2 = \left(\frac{1}{d}\sum_{i=1}^{d}\alpha_i^2\right)\lambda(B^d),
\end{align*}
as required.
\end{proof}

\begin{teor}\label{teorquadratic2} For every $q \leq 1$, we have
\[
\tilde{M}_q(B^d) = \lambda(B^d)T^{q}(B^d)
\]
Moreover, the Euclidean norm is the only maximizer for the functional $F_{q,B^d}$.
\end{teor}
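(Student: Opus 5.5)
Every $H\in\s(\mathcal{Q})$ can be rotated to diagonal form, and rotations leave $B^d$ invariant. So I would first reduce to $H(\xi)=\sqrt{\sum_i\alpha_i^2\xi_i^2}$ with $0\le\alpha_i\le1$ and $\max_i\alpha_i=1$. By Proposition~\ref{propochange} this gives $F_{q,B^d}(H)=F_{q,B^d}(H\circ R)$ for every $R\in O(d)$.

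\emph{Case of norms} (all $\alpha_i>0$). Set $M=\operatorname{diag}(\alpha_i^{-1})$. As in the proof of Theorem~\ref{teorquadratic}, Proposition~\ref{propochange} gives
\[
\lambda_H(B^d)=\lambda(MB^d),\qquad T_H(B^d)=|\det M|^{-1}T(MB^d)=\Big(\prod_i\alpha_i\Big)T(E_d(a)),\quad a_i=\alpha_i^{-1}.
\]
The torsion of an ellipsoid is explicit:
\[
T_H(B^d)=\frac{\omega_d}{d+2}\Big(\sum_i\alpha_i^2\Big)^{-1}=T(B^d)\Big(\frac1d\sum_i\alpha_i^2\Big)^{-1}.
\]
For the eigenvalue, Lemma~\ref{lemaboundball} gives $\lambda_H(B^d)\le s\,\lambda(B^d)$ with $s=\frac1d\sum_i\alpha_i^2$. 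Combining the two,
\[
F_{q,B^d}(H)\le s^{1-q}\lambda(B^d)T^q(B^d).
\]
Since $\alpha_i\le1$ we have $s\le1$, and since $q\le1$ we get $s^{1-q}\le1$. This proves $\tilde M_q(B^d)\le\lambda(B^d)T^q(B^d)$ on norms, and the value is attained at the Euclidean norm.

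\emph{Degenerate case.} The torsion formula also holds when some $\alpha_i=0$. One way is Proposition~\ref{propocomputedegenerate} with integration over slices; another is continuity on $\mathcal{Q}$ (Theorem~\ref{contquadratic}, since $B^d$ is convex), approximating by $\alpha_i^{(n)}\downarrow\alpha_i$. Lemma~\ref{lemaboundball} is already stated for $\alpha_i\ge0$. Either way the same inequality $F_{q,B^d}(H)\le s^{1-q}\lambda T^q$ holds on all of $\s(\mathcal{Q})$, which yields $\tilde M_q(B^d)=\lambda(B^d)T^q(B^d)$.

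\emph{Uniqueness} is the delicate step. If $H$ is not Euclidean, then some $\alpha_i<1$, so $s<1$. For $q<1$ this gives $s^{1-q}<1$ and a strict inequality, so $H$ is not a maximizer. For $q=1$ the factor $s^{1-q}$ equals $1$, so I need strictness in Lemma~\ref{lemaboundball} itself. The radial eigenfunction $u$ of the ball is an eigenfunction for $H$ only if $-\dive(A\nabla u)=\lambda u$ with $A=\operatorname{diag}(\alpha_i^2)$. Radial symmetry of $u$ together with non-scalar $A$ contradicts this: expand $\dive(A\nabla u)$ using $\nabla u=f'(r)x/r$; the angular dependence on $\sum\alpha_i^2x_i^2/r^2$ cannot cancel unless all $\alpha_i$ are equal, and then $\max\alpha_i=1$ forces $H$ to be Euclidean. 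Hence $u$ is not a minimizer of the Rayleigh quotient for $\lambda_H(B^d)$, and the inequality is strict.

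In the degenerate case, strictness for $q=1$ follows the same way, since $u$ is still not optimal. Alternatively, if some $\alpha_i=0$, the lower semicontinuity of $T$ and the strict gap can be checked directly.
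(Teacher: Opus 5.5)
Your proof is correct and follows the paper's route. You diagonalize $H$, compute $T_H(B^d)=T(B^d)/s$ with $s=\frac1d\sum_i\alpha_i^2$, extend to degenerate $H$ by continuity on $\mathcal{Q}$, and combine with Lemma~\ref{lemaboundball} to get $F_{q,B^d}(H)\le s^{1-q}\lambda(B^d)T^q(B^d)$.

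The one place you differ is uniqueness at $q=1$. The paper deduces the strict inequality directly from $\alpha_{i_0}<1$, but that only works for $q<1$, since $s^{1-q}=1$ when $q=1$. Your Euler--Lagrange argument therefore fills a step the paper's proof omits: it shows strictness in Lemma~\ref{lemaboundball}, because the ball's radial eigenfunction does not solve $-\dive(A\nabla u)=\mu u$ when $A$ is not scalar.

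To make the ``cannot cancel'' claim airtight, note that the angular term comes with coefficient $f''(r)-f'(r)/r$. If this vanishes on $(0,1)$, then $f'=cr$ and $-\Delta u=-dc$ is constant, which contradicts $-\Delta u=\lambda(B^d)u$ with $u>0$ inside and $u=0$ on $\partial B^d$. The weak Euler--Lagrange equation also holds for degenerate $A$, and $u$ is smooth in $B^d$, so the same pointwise computation covers the case where some $\alpha_i=0$.
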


\begin{proof} Let $H \in \s(\mathcal{Q})$, without loss of generality we can assume that there are $\alpha_i \geq 0$, with $\max \alpha_i =1$, satisfying
\[
H(\xi) = \sqrt{\sum_{i=1}^{d}\alpha_i^2\xi_i^2}.
\]
Assume first that $\alpha_i > 0$ for every $i \in \{1,\dots,d\}$, by Proposition~\ref{propochange}
\begin{align*}
T_{H}(B^d)&=\left(\prod_{i=1}^{d} \alpha_i^{-1}\right)^{-1} T(E_d(\alpha_1^{-1},\dots,\alpha_{d}^{-1}))\\
&=\left(\prod_{i=1}^{d}\alpha_i\right)\frac{\omega_d}{(d+2)}\left(\prod_{i=1}^{d}\alpha_i^{-1}\right)\left(\sum_{i=1}^{d} \alpha_i^2\right)^{-1}\\
&=\frac{\omega_d}{(d+2)}\left(\sum_{i=1}^{d} \alpha_i^2\right)^{-1}.
\end{align*}
By a continuity argument, using Theorem~\ref{contquadratic} we have
\begin{equation}\label{eqTHBd}
T_{H}(B^d) = \frac{\omega_d}{(d+2)}\left(\sum_{i=1}^{d} \alpha_i^2\right)^{-1}
\end{equation}
for $\alpha_i \geq 0$. By Lemma~\ref{lemaboundball} and \eqref{eqTHBd} we have
\begin{align*}
F_{q,B^d}(H)&=\lambda_{H}(B^d)T^{q}_{H}(B^d)\\
&\le\left(\frac{1}{d}\sum_{i=1}^{d}\alpha_i^2\right)\lambda(B^d)\left(\frac{\omega_d}{(d+2)}\left(\sum_{i=1}^{d} \alpha_i^2\right)^{-1}\right)^q\\
&=\left(\frac{1}{d}\sum_{i=1}^{d}\alpha_i^2\right)\lambda(B^d) \left(dT(B^d)\left(\sum_{i=1}^{d} \alpha_i^2\right)^{-1}\right)^{q}\\
&=\left(\frac{1}{d}\sum_{i=1}^{d}\alpha_i^2\right)^{1-q} \lambda(B^d)T^{q}(B^d).
\end{align*}
Notice that if $H$ is not the Euclidean norm, then there is $i_0$ such that $a_{i_0} < 1$, hence
\[
F_{q,B^d}(H) < \lambda(B^d)T^{q}(B^d),
\]
which concludes the proof.
\end{proof}

\begin{rem}
Putting together Theorem \ref{teorquadratic} and Proposition \ref{ell2}, in dimension $d=2$ and for the ellipse
$$E=\left\{\frac{x^2}{a^2}+\frac{y^2}{b^2}<1\right\},$$
with $a\le b$, we obtain:
\begin{itemize}
\item for every $q\le1$
$$\tilde{m}_q(E)=\frac{\pi^2|E|^q}{4^{q+1}}b^{2(q-1)},$$
achieved for $H(\xi)=|\xi_2|$; 
\item for every
$$q\ge1+\frac{\log 2}{\log{\left(1+(a/b)^2\right)}}$$
$\tilde{m}_q(E)$ and $m_{q}(E)$ are achieved on a norm. 
\end{itemize}
In particular, if $E$ is the unit disc $D$, for every $q\le1$ the minimal value $\tilde{m}_q(D)$ is achieved on any $H(\xi)=|\langle e,\xi\rangle|$ with $|e|=1$, and for every $q\ge2$, $\tilde{m}_q(D)$ and $m_{q}(D)$ are achieved on a norm.
\end{rem}

We summarize in Tables \ref{tableAny}--\ref{tableC1} what is known on the problems $m_{q}(\O), M_{q}(\O), \tilde{m}_{q}(E)$ and $\tilde{M}_{q}(E)$. These are the results of Theorem~\ref{min q large},~\ref{max q small},~\ref{teorquadratic},~\ref{teorquadratic2} Corollary~\ref{cormaxquadratic} and consequences of Theorem~\ref{contquadratic} and Corollary~\ref{cor2dim}.

\begin{table}[h!]
\begin{tabular}{|c|c|c|}
\hline
q & $m_{q}(\O)$ & $M_{q}(\O)$ \\
\hline
small & -  & Achieved on $\s(\HH_d)$ \\ 
\hline
large & Achieved on $\s(\HH_d)$ & - \\ 
\hline
\end{tabular}
\caption{$\O$ is any bounded domain}
\label{tableAny}
\end{table}

\begin{table}[h!]
\begin{tabular}{|c|c|c|}
\hline
q & $\tilde{m}_{q}(E)$ & $\tilde{M}_{q}(E)$ \\
\hline
small & Achieved on $\s(\HH_1)$ and characterized  & Achieved on $\s(\HH_d)$ \\ 
\hline
$q \leq 1$ & Achieved on $\s(\HH_1)$ and characterized & Achieved on $\s(\HH)\setminus\s(\HH_1)$ (or by $|\cdot|$ if $E = B^d$)\\ 
\hline
$q > 1$& Achieved on $\s(\HH)$ & Achieved on $\s(\HH)$ \\
\hline
large & Achieved on $\s(\HH_d)$ & Achieved on $\s(\HH)$\\
\hline
\end{tabular}
\caption{$E$ is an ellipsoid}
\label{tableQuadratic}
\end{table}

\begin{table}[h!]
\begin{tabular}{|c|c|c|}
\hline
q & $m_{q}(\O)$ & $M_{q}(\O)$ \\
\hline
small & Achieved on $\s(\HH)$  & Achieved on $\s(\HH_d)$ \\ 
\hline
large & Achieved on $\s(\HH_d)$ & Achieved on $\s(\HH)$ \\ 
\hline
\end{tabular}
\caption{$\O \subset \R^2$ is convex}
\label{tableConvex}
\end{table}

\begin{table}[h!]
\begin{tabular}{|c|c|c|}
\hline
q & $m_{q}(\O)$ & $M_{q}(\O)$ \\
\hline
small & -  & Achieved on $\s(\HH_d)$ \\ 
\hline
large & Achieved on $\s(\HH_d)$ & Achieved on $\s(\HH)$ \\ 
\hline
\end{tabular}
\caption{$\O \subset \R^2$ is $C^1$}
\label{tableC1}
\end{table}

\section{Remarks on some inequalities}

We recall that, when $H(\xi)=|\xi|$ the following Kohler-Jobin inequality holds:
\[\lambda(\O)T^q(\O)\ge\lambda(B)T^q(B)\]
for every bounded domain $\O$ with $|\O|=1$, whenever $q\le d/(d+2)$. Theorem~\ref{degeneratekohlerjobin} shows that a similar inequality cannot be expected in the case of degenerate seminorms $H$. We first give a proof of important bounds on the functional $F_{1,\O}(H)$.

\begin{propo}\label{propoupperboundq=1} Let $\O\subset\R^d$ be bounded domain and let $H \in \HH$. Then,
\[
\lambda_{H}(\O)T_{H}(\O) \leq |\O|.
\]
In addition, if $H \in \HH_{k}$ and $\O$ is convex, then
\[
\lambda_{H}(\O)T_{H}(\O) \geq \frac{\pi^2|\O|}{4kd^{s(d+2)}(d+2)},
\]
where $s = \frac{1}{2}$ if $\O$ is symmetric and $s=1$ if not. If we also have $k=1$, then
\[
\lambda_{H}(\O)T_{H}(\O) \leq \frac{\pi^2}{12}|\O|.
\]
\end{propo}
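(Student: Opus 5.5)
For the first inequality we use one test function in both variational problems. Let $w\in H^1_0(\O)$ be a maximizer (or a near-maximizer) for $T_H(\O)$. Inserting $w$ into the eigenvalue quotient gives
\[
\lambda_H(\O)T_H(\O)\le \frac{\int_\O H^2(\nabla w)\,dx}{\int_\O w^2\,dx}\cdot\frac{\left(\int_\O w\,dx\right)^2}{\int_\O H^2(\nabla w)\,dx}=\frac{\left(\int_\O w\,dx\right)^2}{\int_\O w^2\,dx}\le|\O|,
\]
where the last step is Cauchy--Schwarz. If $H$ is degenerate, no maximizer may exist. In that case we use an approximating sequence, or we write $H\ge 0$ and take any $u$ with $\int H^2(\nabla u)>0$. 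In fact we do not need a maximizer at all: for every admissible $u$,
\[
\lambda_H\cdot\frac{(\int u)^2}{\int H^2(\nabla u)}\le \frac{(\int u)^2}{\int u^2}\le|\O|,
\]
and taking the supremum over $u$ gives the bound.

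For the case $k=1$ we reduce to $H(\xi)=|\xi_d|$, up to scaling (both sides are invariant under $H\mapsto tH$) and a rotation (Proposition~\ref{propochange} with $A\in O(d)$). Proposition~\ref{propocomputedegenerate} then gives
\[
\lambda_H(\O)=\inf_x\frac{\pi^2}{L_x^2}\qquad\text{and}\qquad T_H(\O)=\int T(\O_x)\,dx,
\]
where $\O_x$ is a union of intervals of total length $|\O_x|$. On a union of intervals the one-dimensional torsion is the sum of the values $\ell_j^3/12$. Hence
\[
\lambda_H T_H\le \frac{\pi^2}{12}\int_{\R^{d-1}}\sum_j \frac{\ell_j(x)^3}{L^2}\,dx,
\]
where $L=\sup_x\max_j\ell_j(x)$. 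Since $\ell_j\le L$, the integrand is at most $\sum_j\ell_j=|\O_x|$, and integrating gives $\frac{\pi^2}{12}|\O|$.

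For the lower bound with $\O$ convex and $H\in\HH_k$, we first normalize $H$ and rotate so that $H(\xi,\eta)=G(\eta)$ with $G$ a norm on $\R^k$. Next we use John's theorem to compare $G$ with a Euclidean norm. This gives a quadratic seminorm $Q$ with kernel equal to $\ker H$ and $Q\le H\le\sqrt{k}\,Q$. We then apply monotonicity: $\lambda_H\ge\lambda_Q$ and $T_H\ge k^{-1}T_Q$. By a linear change of variables in the $\eta$-directions (Proposition~\ref{propochange}, which preserves the product up to the matching determinant factors), we may take $Q(\xi)=|\pi_k\xi|$, a projection onto $k$ coordinates.

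The remaining step is to bound $\lambda_Q T_Q$ from below by $|\O|$ using convexity. Here we apply John's theorem again, now to $\O$: there is an ellipsoid $E$ with $E\subset\O\subset d^{s}E$, where $s=\frac12$ for symmetric $\O$ and $s=1$ otherwise. Monotonicity in the domain gives $\lambda_Q(\O)\ge\lambda_Q(d^sE)$ and $T_Q(\O)\ge T_Q(E)$. Dilation scaling gives $\lambda_Q(d^sE)=d^{-2s}\lambda_Q(E)$. We must then compute $\lambda_Q(E)T_Q(E)$ for a projection-type quadratic seminorm on an ellipsoid. A subadditivity argument as in Proposition~\ref{euclidean ellipsoid}, combined with Proposition~\ref{propoellipsoid} along a direction in $\operatorname{Im}$ of the projection, yields a bound of the form
\[
\lambda_Q(E)T_Q(E)\ge\frac{\pi^2}{4}\frac{|E|}{d+2}.
\]
Finally we use $|E|\ge d^{-sd}|\O|$. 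Collecting the powers of $d$ gives $d^{-s(d+2)}$ and the factor $k^{-1}$, which is the stated constant. The main obstacle is this ellipsoid computation for a degenerate quadratic $Q$, that is, checking that the single-direction lower bound suffices without losing more than the factor $k$. If it does not, I would instead bound $\lambda_Q(E)\ge\lambda_{H_v}(E)$ for a suitable unit $v$ in the range and $T_Q(E)\ge T(E)$ directly.
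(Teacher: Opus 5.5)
Your proofs of $\lambda_H(\O)T_H(\O)\le|\O|$ and of the case $k=1$ are correct. In the $k=1$ case you do not even need convexity, since you allow $\O_x$ to be a union of intervals, whereas the paper uses that $\O_x$ is a single interval. For the convex lower bound, your chain coincides with the paper's: John's theorem for $G$ and for $\O$, monotonicity in $H$ and in the domain, $\lambda_Q(d^sE)=d^{-2s}\lambda_Q(E)$, and $|E|\ge d^{-sd}|\O|$.

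The gap is the step you flag yourself. You assert, but do not prove, the estimate
\[
\lambda_Q(E)\,T_Q(E)\ge\frac{\pi^2|E|}{4(d+2)}
\]
for a quadratic seminorm $Q$ of rank $k$ on an ellipsoid $E$ whose axes need not be aligned with $\ker Q$. Splitting $Q^2=\sum_j H_{v_j}^2$ over an orthonormal basis of $(\ker Q)^\perp$ and applying Proposition~\ref{propoellipsoid} bounds only $\lambda_Q(E)$ from below. For the torsion, Proposition~\ref{propoellipsoid} is a rank-one statement, and monotonicity gives $T_Q(E)\le T_{H_{v_j}}(E)$, which is the wrong direction.

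Your fallback fails as well. Take $d=2$, $Q(\xi)=|\xi_2|$ (so $v=e_2$ is forced) and $E=E_2(\varepsilon,1)$. Then
\[
\lambda_{H_{e_2}}(E)\,T(E)=\frac{\pi^2|E|}{16}\cdot\frac{\varepsilon^2}{1+\varepsilon^2},\qquad\text{while}\qquad\lambda_Q(E)\,T_Q(E)=\frac{\pi^2|E|}{16}.
\]
So replacing $T_Q$ by $T$ loses an unbounded factor whenever $E$ is thin in a direction of $\ker Q$. No constant depending only on $d$ and $k$ survives.

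The missing ingredient is already in the paper: Theorem~\ref{teorquadratic} at $q=1$. Since $F_{1,E}(tQ)=F_{1,E}(Q)$ for $t>0$, you get
\[
\lambda_Q(E)T_Q(E)=F_{1,E}(Q/\lVert Q\rVert)\ge\tilde m_1(E)=\frac{\pi^2|E|}{4(d+2)},
\]
because the factor $(\max_i a_i)^{2(q-1)}$ equals $1$ when $q=1$. This is exactly what the paper uses; degenerate $Q$ are covered there through the continuity on $\mathcal Q$ given by Theorem~\ref{contquadratic}.

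A self-contained alternative also works.
\begin{itemize}
\item Write $Q(x,y)=|y|$ with $x\in\R^{d-k}$, $y\in\R^k$. The domain maps $(x,y)\mapsto(Bx,\,y+Dx)$ and rotations in $y$ correspond, via Proposition~\ref{propochange}, to linear maps $A$ with $Q\circ A=Q$. Hence they preserve $\lambda_QT_Q/|E|$.
\item Completing the square in $y$ (after a translation) brings $E$ to the form $\{|x|^2+\sum_j y_j^2/b_j^2<1\}$.
\item Proposition~\ref{propocomputedegenerate} then gives $\lambda_Q(E)=\lambda(E_k(b))$ and $T_Q(E)=T(E_k(b))\int_{B^{d-k}}(1-|x|^2)^{(k+2)/2}\,dx$.
\item The $k$-dimensional bound $\lambda(E_k(b))T(E_k(b))\ge\frac{\pi^2|E_k(b)|}{4(k+2)}$ follows from the argument of Proposition~\ref{euclidean ellipsoid}.
\item Combine it with $\int_{B^{d-k}}(1-|x|^2)^{(k+2)/2}dx=\frac{k+2}{d+2}\int_{B^{d-k}}(1-|x|^2)^{k/2}dx$ and $|E|=|E_k(b)|\int_{B^{d-k}}(1-|x|^2)^{k/2}dx$. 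This yields the claim with exactly the stated constant.
\end{itemize}
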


\begin{proof} For every $\varepsilon > 0$, there is a function $w_{\varepsilon}$ such that
\[
T_{H}(\O) \leq (1+\varepsilon)\left(\int_{\O} w_{\varepsilon} \ dx\right)^{2}\left(\int_{\O} H^{2}(\nabla w_{\varepsilon}) \ dx\right)^{-1}.
\]
Therefore, by H\"older inequality we have
\begin{align*}
\lambda_{H}(\O)T_{H}(\O) &\leq \frac{\left(\int_{\O} H^{2}(\nabla w_{\varepsilon}) \ dx\right)}{\int_{\O} w_{\varepsilon}^2}\frac{(1+\varepsilon)\left(\int_{\O} w_{\varepsilon} \ dx\right)^{2}}{\left(\int_{\O} H^{2}(\nabla w_{\varepsilon}) \ dx\right)} \leq (1+\varepsilon)|\O|,
\end{align*}
the result now follows by letting $\varepsilon$ go to zero. 

Now assume that $\O$ is convex and that $H \in \HH_{k}$, up to a rotation we can assume that $H(\xi,\eta) = G(\eta)$ for every $(\xi,\eta) \in \R^{d-k}\times\R^k$ for a norm $G$ in $\R^k$. Due to Theorem III in \cite{J}, there is a norm $N \in \mathcal{Q}$ in $\R^k$ such that $\sqrt{k}N \geq G \geq N$, extending this to a seminorm $\tilde{N}(\xi,\eta) = N(\eta)$ we have $\sqrt{k}\tilde{N} \geq H \geq \tilde{N}$. Also from Theorem III in \cite{J}, there is an ellipsoid $E$ such that $E \subseteq \O \subseteq d^{s}E$, where $s = \frac{1}{2}$ if $\O$ is symmetric and $s =1$ if not. Notice that $|E| \geq d^{-sd}|\O|$. Therefore, from Theorem~\ref{teorquadratic} we have
\begin{align*}
\lambda_{H}(\O)T_{H}(\O) \geq \lambda_{\tilde{N}}(d^sE)T_{\sqrt{k}\tilde{N}}(E) = \frac{1}{kd^{2s}}\lambda_{\tilde{N}}(E)T_{\tilde{N}}(E) \geq \frac{\pi^2|E|}{4kd^{2s}(d+2)} \geq \frac{\pi^2|\O|}{4kd^{s(d+2)}(d+2)}.
\end{align*}

Now assume that $H \in \HH_1$. Hence, up to a rotation $H(\xi) = |\xi_d|$. Notice also that $\O_x$ is always an interval, therefore
\begin{align*}
\lambda_{H}(\O)T_{H}(\O) & = \frac{\pi^2}{(L_{e_d}(\O))^2}\int_{\R^{d-1}}\frac{|\O_x|^3}{12} \ dx \\
&= \frac{\pi^2}{12}\int_{\R^{d-1}}\frac{|\O_x|^2}{(L_{e_d}(\O))^2} |\O_x| \ dx \leq \frac{\pi^2}{12}\int_{\R^{d-1}}|\O_x| \ dx  = \frac{\pi^2|\O|}{12},
\end{align*}
which concludes the proof.
\end{proof}

\begin{teor}[Degenerate Kohler-Jobin]\label{degeneratekohlerjobin} Let $H \in \HH_{\leq d-1}$. Then, for every $q \in \R$,
\[
\inf_{|\O| = 1} \lambda_{H}(\O)T_{H}^{q}(\O) = 0.
\]
\end{teor}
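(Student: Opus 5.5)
We may rotate coordinates so that $H(\xi,\eta)=G(\eta)$ for $(\xi,\eta)\in\R^{d-k}\times\R^k$, where $G$ is a norm on $\R^k$ and $1\le k\le d-1$. By Proposition~\ref{propochange} with $A$ orthogonal, $\lambda_H$ and $T_H$ are invariant under this rotation, and $|\O|$ is unchanged. The case $H=0$, i.e.\ $k=0$, is degenerate: there $\lambda_0\equiv0$ while $T_0=+\infty$, so the product is not meaningful and will either be excluded or treated by convention. The idea is to exploit Proposition~\ref{propocomputedegenerate}, which says that $\lambda_H$ sees only the \emph{worst} slice while $T_H$ \emph{integrates} the slices. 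Since $d-k\ge1$, there is at least one free direction along which we can stretch or compress the domain without penalty.

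\emph{First step (the cases $q\neq1$).} Take boxes $\O_{L,\ell}=(0,L)^{d-k}\times(0,\ell)^k$ with $L^{d-k}\ell^k=1$, and let $Q=(0,1)^k$. By Proposition~\ref{propocomputedegenerate} and scaling,
\[
\lambda_H(\O_{L,\ell})=\ell^{-2}\lambda_G(Q),\qquad T_H(\O_{L,\ell})=L^{d-k}\ell^{k+2}T_G(Q)=\ell^{2}T_G(Q).
\]
Hence $F=\ell^{2q-2}\lambda_G(Q)T_G(Q)^q$. If $q>1$ we let $\ell\to0$ (so $L\to\infty$), and if $q<1$ we let $\ell\to\infty$ (so $L\to0$, which is allowed because $d-k\ge1$). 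In both cases the infimum is $0$.

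\emph{Second step (a construction valid for every $q$, in particular $q=1$).} Here the boxes give a constant product, so slices of different sizes are needed. Fix $\ell_1$ large, $\ell_2$ small and $\delta$ small. Let $\ell(x)=\ell_1$ for $x$ in a small cube $D\subset(0,L)^{d-k}$ of measure $\delta$, and $\ell(x)=\ell_2$ elsewhere. Set
\[
\O=\{(x,y)\ :\ x\in(0,L)^{d-k},\ y\in(0,\ell(x))^k\}.
\]
This set is open up to a null modification and connected, since every slice contains $(0,\ell_2)^k$. Impose $\delta\ell_1^k=\varepsilon$ and $(L^{d-k}-\delta)\ell_2^k=1-\varepsilon$, so that $|\O|=1$. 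Proposition~\ref{propocomputedegenerate} then gives $\lambda_H(\O)=\ell_1^{-2}\lambda_G(Q)$, because the largest slice has the smallest eigenvalue. It also gives $T_H(\O)=\big(\varepsilon\ell_1^2+(1-\varepsilon)\ell_2^2\big)T_G(Q)$. For $q=1$ this yields
\[
F\simeq \varepsilon+\frac{\ell_2^2}{\ell_1^2}\longrightarrow0
\]
as $\varepsilon\to0$ and $\ell_2/\ell_1\to0$. For general $q$ the parameters can be tuned similarly, although the first step already covers $q\ne1$.

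\emph{Main obstacle.} The delicate point is legitimacy rather than computation. We must check that Proposition~\ref{propocomputedegenerate} applies to these piecewise-box sets, which may be nonconvex and have nonsmooth boundary. We must also ensure that $\O$ qualifies as a bounded domain; if needed, the corners can be smoothed, or the jump in $\ell(x)$ replaced by a continuous monotone profile. Finally, the constants $\lambda_G(Q)$ and $T_G(Q)$ must be finite and positive, which holds because $G$ is a norm on $\R^k$. If the proposition required more regularity, I would instead use monotonicity of $\lambda_H$ and $T_H$ under inclusion to sandwich $\O$ between explicit product sets.
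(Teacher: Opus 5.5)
Your proof is correct. For $q<1$ it is essentially the paper's: the paper uses $(0,1)^{d-k-1}\times(0,\omega_k^{-1}n^{-k})\times nB^k$, which is flat in the free variables and large in the variables seen by $H$, exactly like your boxes with $\ell\to\infty$.

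For $q\ge1$ the routes differ. The paper first compares $G$ with the Euclidean norm $\mathcal{E}_k$. It then imports Proposition~2.1 of \cite{BBP22}, which gives domains $U_n\subset\R^k$ with $|U_n|=1$ and $\lambda(U_n)T^q(U_n)\to0$ for $q>2/(k+2)$, and takes the cylinders $(0,1)^{d-k}\times U_n$. So the degenerate statement is inherited from the failure of the Euclidean Kohler--Jobin inequality in $\R^k$.

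You instead let the free directions do all the work. For $q>1$, thin long boxes ($\ell\to0$, $L\to\infty$) already suffice. For $q=1$, your two-box set reproduces the ``one large piece plus a large volume of small pieces'' mechanism, with the free variable supplying the small pieces, so no thin connecting tubes are needed.

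Your argument is therefore self-contained and elementary. By working with $G$ directly, it also avoids the comparison step; there, incidentally, the constant $c_2^2c_1^{-2q}$ has to be replaced by $c_2^{2-2q}$ when $q<0$. The paper's version is shorter given the citation and makes the link with the Euclidean phenomenon explicit.

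As for your stated obstacle, it disappears. If $D$ is an open cube, your set is exactly $D\times(0,\ell_1)^k\cup(0,L)^{d-k}\times(0,\ell_2)^k$, a union of two overlapping open boxes, hence a bounded domain with no modification. For $q=1$ you only need two upper bounds, both elementary:
\begin{itemize}
\item $\lambda_H(\O)\le\ell_1^{-2}\lambda_G(Q)$, by testing with $\phi(x)v(y)$, $\phi\in C^\infty_c(D)$;
\item $T_H(\O)\le\int T_G(\O_x)\,dx$, by slicing in $x$ and applying Cauchy--Schwarz.
\end{itemize}
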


\begin{proof} 
Let $k \in \{1,\dots,d-1\}$ be such that $H \in \HH_{k}$, without loss of generality we can assume $H(\xi,\eta) = G(\eta)$ for every $(\xi,\eta)\in\R^{d-k}\times\R^{k}$, where $G$ is a norm in $\R^k$. There are $c_1,c_2 > 0$ such that $c_2|\eta| \geq G(\eta) \geq c_1|\eta|$, hence, if we denote
\[
\mathcal{E}_{k}(\xi) = \sqrt{\sum_{i=d-k+1}^{d}\xi_i^2},
\]
we have $c_2\mathcal{E}_k \geq G \geq c_1\mathcal{E}_k$ and therefore
\[
\lambda_{H}(\O)T_{H}^q(\O) \leq c_2^2c_1^{-2q}\lambda_{\mathcal{E}_k}(\O)T_{\mathcal{E}_k}^q(\O)
\]
for every bounded domain $\O$, hence it is enough to prove the theorem for $\mathcal{E}_k$. 

First we assume $q < 1$, consider $\O_n = (0,1)^{d-k-1}\times\left(0,\omega_k^{-1}n^{-k}\right)\times (nB^k)$, then $|\O_n| = 1$. Due to Proposition~\ref{propocomputedegenerate} we have
\[
\lambda_{\mathcal{E}_k}(\O_n) = \inf_{x \in \R^{d-k}}\lambda_{\mathcal{E}_k}((\O_n)_x) = \inf_{x \in (0,1)^{d-k-1}\times\left(0,\omega_k^{-1}n^{-k}\right)} \lambda(nB^k) = n^{-2}\lambda(B^k)
\]
and
\[
T_{\mathcal{E}_k}(\O_n) = \int_{(0,1)^{d-k-1}\times\left(0,\omega_k^{-1}n^{-k}\right)}T(nB^k) \ dx = \omega_k^{-1}n^{-k}n^{k+2}T(B^k) = \frac{n^2}{k(k+2)}.
\]
Therefore
\[
\lim_{n \to \infty} \lambda_{\mathcal{E}_k}(\O_n)T_{\mathcal{E}_k}^q(\O_n) = \lim_{n \to \infty} n^{-2}\lambda(B^k)\frac{n^{2q}}{k^q(k+2)^q} = 0.
\]

Now assume $q \geq 1 > \frac{2}{k+2}$, from Proposition 2.1 of \cite{BBP22} there is a sequence of bounded domains $U_n \subset \R^k$ such that $|U_n| = 1$ and
\[
\lim_{n \to \infty} \lambda(U_n)T^{q}(U_n) = 0.
\]
Let $\O_n = (0,1)^{d-k}\times U_n$, then $|\O_n| = 1$ and, due to Proposition~\ref{propocomputedegenerate} we have
\[
\lambda_{\mathcal{E}_k}(\O_n) = \inf_{x \in \R^{d-k}}\lambda_{\mathcal{E}_k}((\O_n)_x) = \inf_{x \in (0,1)^{d-k}} \lambda(U_n) = \lambda(U_n)
\]
and
\[
T_{\mathcal{E}_k}(\O_n) = \int_{(0,1)^{d-k}}T(U_n) \ dx = T(U_n).
\]
Therefore
\[
\lim_{n \to \infty} \lambda_{\mathcal{E}_k}(\O_n)T_{\mathcal{E}_k}^q(\O_n) = \lim_{n \to \infty} \lambda(U_n)T^{q}(U_n) = 0
\]
and the conclusion follows.
\end{proof}

\section{Final remarks and questions}\label{s_open}

In this section we list some open problems that in our opinion merit to be further investigated, together with some additional comments and remarks.

\begin{itemize}

\item The first question is about the existence of optimal seminorms $H_{opt}$ for the functional $F_{q,\O}$ over the entire class $\s(\HH)$. In Section \ref{scont}, we examined several continuity properties of the mappings $H\mapsto\lambda_H(\O)$ and $H\mapsto T_H(\O)$. In particular, Example \ref{edisc} illustrates that, in the case of $\lambda_H$, additional structural assumptions on the domain $\O$ are necessary in order to ensure continuity. By contrast, the functional $T_H$ appears to exhibit a greater degree of robustness with respect to variations of the seminorm $H$. This observation naturally leads to the question of identifying general conditions on the domain $\O$ under which both mappings are continuous from $\HH$ (endowed with the topology of uniform convergence on compact subsets) into $\R$. This result would immediately yield the existence of optimal seminorms $H$ for the functional $F_{q,\O}$ over the entire class $\s(\HH)$, both in the corresponding minimization and maximization problems.

\item As $q \to 0$, the contribution of the term $T_H^q(\O)$ becomes progressively less significant, and the functional $F_{q,\O}$ approaches the limit functional $\lambda_H(\O)$. As already illustrated in Example~\ref{edisc}, the associated minimization problem for $\lambda_H(\O)$ may fail to admit a solution for general domains $\O$. It therefore remains unclear whether, for fixed but sufficiently small $q$, one can construct an analogous counterexample showing the nonexistence of minimizers for $F_{q,\O}$ as well. We believe that this delicate issue deserves further and more systematic investigation.

By contrast, the maximization problem associated with $F_{q,\O}$ always admits an optimal solution $H_{opt}$ for $q$ sufficiently small; moreover, this solution can be shown to be a norm. An interesting open question is whether this optimal norm coincides with the Euclidean one when $q$ is small enough. In our view, this issue also calls for deeper and more careful analysis.

\item At the opposite extreme, as $q\to\infty$, the influence of the term $\lambda_H(\O)$ becomes progressively negligible, and the minimization problem for $F_{q,\O}$ approaches that of the functional $T_H(\O)$. The latter problem is well understood and is uniquely minimized by the Euclidean norm. In this regime, we are able to show that, for $q$ sufficiently large, a minimizer $H_{opt}$ indeed exists and is a norm. What remains open, however, is whether this optimal norm must necessarily coincide with the Euclidean one, or whether new minimizers may arise in the large regime of parameter $q$.

Similarly to the minimization problem for small $q$, in the opposite regime of large $q$ the existence of solutions to the maximization problem for $F_{q,\O}$, for general domains $\O$, remains an open question. Establishing either positive results or suitable counterexamples in this setting appears to be a challenging and promising direction for future research.

\item Throughout this work, we have restricted our attention to energies exhibiting quadratic growth, as described in \eqref{energy}. However, a natural and substantially broader framework is obtained by considering energies with general $p$-growth (with $p>1$), namely
\[E_H(u)=\frac1p\int_\O H^p(\nabla u)\,dx.\]
It is reasonable to expect that many of the results established in the present paper extend to this more general setting, possibly under appropriate modifications of the assumptions and techniques. Such an extension could lead to further interesting developments, particularly in the limiting regimes as $p\to1$ and $p\to\infty$.

\end{itemize}

\bigskip

\noindent{\bf Acknowledgments.} GB is member of the Gruppo Nazionale per l'Analisi Matematica, la Probabilit\`a e le loro Applicazioni (GNAMPA) of the Istituto Nazionale di Alta Matematica (INdAM). RFH is supported by CAPES (PROEX 88887.712161/2022-00) and would like to thank Scuola Normale Superiore (SNS) for the exchange period and Luigi Ambrosio for connecting the authors.

\bigskip\small\noindent
Giuseppe Buttazzo: Dipartimento di Matematica, Universit\`a di Pisa\\
Largo B. Pontecorvo 5, 56127 Pisa - ITALY\\
{\tt giuseppe.buttazzo@unipi.it}\\
{\tt http://www.dm.unipi.it/pages/buttazzo/}

\bigskip\small\noindent
Raul Fernandes Horta: Departamento de Matem\'atica, Universidade Federal de Minas Gerais\\
Caixa Postal 702, 30123-970, Belo Horizonte, MG - BRAZIL\\
{\tt raul.fernandes.horta@gmail.com}

\end{document}